\numberwithin{equation}{section}
\newtheorem{theorem}{Theorem}[section]
\newtheorem{lemma}[theorem]{Lemma}
\newtheorem{proposition}[theorem]{Proposition}
\newtheorem{remark}[theorem]{Remark}
\newcommand{\cI}{{\ensuremath{\mathcal I}} }
\newcommand{\cR}{{\ensuremath{\mathcal R}} }
\renewcommand{\tilde}{\widetilde}          
\DeclareMathSymbol{\leqslant}{\mathalpha}{AMSa}{"36} 
\DeclareMathSymbol{\geqslant}{\mathalpha}{AMSa}{"3E} 
\DeclareMathSymbol{\eset}{\mathalpha}{AMSb}{"3F}     
\newcommand{\R}{\mathbb{R}}
\newcommand{\Z}{\mathbb{Z}}
\newcommand{\N}{\mathbb{N}}
\newcommand{\Q}{\mathbb{Q}}
\newcommand{\PEfont}{\mathrm}
\renewcommand\P{\ensuremath{\PEfont P}}
\newcommand{\ind}{\mathds{1}}
\renewcommand{\epsilon}{\varepsilon} 
\renewcommand{\theta}{\vartheta} 
\renewcommand{\rho}{\varrho} 
\renewcommand{\phi}{\varphi}
\newcommand{\rr}{\mathsf{r}}
\newcommand{\RR}{\mathsf{R}}
\newcommand{\sfC}{\mathsf{C}}
\newenvironment{myenumerate}{%
\renewcommand{\theenumi}{\arabic{enumi}}%
\renewcommand{\labelenumi}{{\rm(\theenumi)}}%
\begin{list}{\labelenumi}
	{%
	\setlength{\itemsep}{0.4em}%
	\setlength{\topsep}{0.5em}%
	\setlength\leftmargin{2.45em}%
	\setlength\labelwidth{2.05em}%
	\setlength{\labelsep}{0.4em}%
	\usecounter{enumi}%
	}%
	}%
{\end{list}
}
\newenvironment{myitemize}{%
\begin{list}{$\bullet$}%
 	{%
	\setlength{\itemsep}{0.4em}%
	\setlength{\topsep}{0.5em}%
	\setlength\leftmargin{2.45em}%
	\setlength\labelwidth{2.05em}%
	\setlength{\labelsep}{0.4em}%
	}%
	}%
{\end{list}}
\renewenvironment{itemize}{
\begin{myitemize}}%
{\end{myitemize}}
\title{The strong renewal theorem}
\author{Francesco Caravenna}
\address{Dipartimento di Matematica e Applicazioni, Universit\`a
degli Studi di Milano-Bicocca, via Cozzi 55, 20125 Milano, Italy}
\email{francesco.caravenna@unimib.it}
\keywords{Renewal Theorem, Local Limit Theorem, Regular Variation}
\subjclass[2010]{60K05; 60G50}
\date{\today}
\newcommand\dd{\mathrm{d}}
\begin{document}

\begin{abstract}
We consider real random walks with positive increments (renewal processes)
in the domain of attraction of a stable law with index $\alpha \in (0,1)$.
The famous local renewal theorem of Garsia and Lamperti \cite{cf:GL},
also called \emph{strong renewal theorem},
is known to hold in complete generality only for $\alpha > \frac{1}{2}$.
Understanding when the strong renewal theorem holds
for $\alpha \le \frac{1}{2}$
is a long-standing problem, with sufficient conditions given
by Williamson \cite{cf:Wil}, Doney \cite{cf:D} and Chi \cite{cf:Chi0,cf:Chi}.
In this paper we give a complete solution, providing explicit
necessary and sufficient conditions
(an analogous result has been independently and simultaneously proved by
Doney~\cite{cf:Don}). We also show
that these conditions fail to be sufficient if the random walk is allowed to take
negative values.

\emph{This paper is superseded by \cite{cf:CD}.}
\end{abstract}

\maketitle

\section{Introduction}

We use the notations $\N = \{1,2,3,\ldots\}$
and $\N_0 = \N \cup \{0\}$. 
Given two functions $f,g:[0,\infty) \to (0,\infty)$ we write
$f \sim g$ to mean $\lim_{s\to\infty} f(s)/g(s) = 1$.

We denote by $\cR_\gamma$ the space of
regularly varying functions with index $\gamma \in \R$, that is
$f \in \cR_\gamma$ if and only if
$\lim_{x\to\infty} f(\lambda x)/f(x) = \lambda^\gamma$ for all $\lambda \in (0,\infty)$.
Functions in $\cR_0$ are called slowly varying.
Note that $f \in \cR_\gamma$ if and only if
$f(x) = x^\gamma \ell(x)$ for some slowly varying function $\ell \in \cR_0$.
We refer to \cite{cf:BinGolTeu} for more details.

\subsection{Main result}
\label{sec:setting}

We fix a probability $F$ on $[0,\infty)$
and we let $X, (X_i)_{i\in\N}$ be independent and identically
distributed (i.i.d.) random variables with law $F$.
The associated random walk (renewal process)
will be denoted by $S_n := X_1 + \ldots + X_n$, with $S_0 := 0$.
We say that $F$ is \emph{arithmetic} 
if it is supported by $h\Z$ for some $h > 0$,
and the maximal value of $h>0$ with this property is called the 
\emph{arithmetic span}  of $F$.

Our key assumption is that
there exist $\alpha \in (0,1)$ and $A \in \cR_\alpha$ such that
\begin{gather}
	\label{eq:tail1}
	\overline F(x) :=
	F((x,\infty)) = 
	\P(X > x) \sim \frac{1}{A(x)} 
	\qquad \text{as} \quad x\to \infty \,.
\end{gather}
We can write $A(x) = L(x) \, x^\alpha$,
for a suitable slowly varying function $L \in \cR_0$.
By \cite[\S1.3.2]{cf:BinGolTeu},
we may take $A:[0,\infty) \to (0,\infty)$ to be differentiable,
strictly increasing and
\begin{equation}\label{eq:deriv}
	A'(s) \sim \alpha \frac{A(s)}{s} \,, \qquad \text{as } s \to \infty \,.
\end{equation}

Let us introduce the renewal measure
\begin{equation} \label{eq:U}
	U(\dd x) := \sum_{n\ge 0} \P(S_n \in \dd x) \,,
\end{equation}
so that $U(I)$ is the expected number of variables $S_n$ that fall
inside $I \subseteq \R$.
It is well known \cite[Theorem 8.6.3]{cf:BinGolTeu} that \eqref{eq:tail1} implies
the following infinite-mean version of the
renewal theorem, with $\sfC = \sfC(\alpha) = \frac{\alpha\sin(\pi\alpha)}{\pi}$:
\begin{equation} \label{eq:SRTint}
	U([0,x]) \sim \frac{\sfC}{\alpha} \, A(x) \quad \text{as} \quad x\to\infty \,.
\end{equation}

Let us introduce the shorthand
\begin{equation}\label{eq:I}
	I = (-h,0] \qquad \text{where} \qquad h :=
	\begin{cases}
	\text{arithmetic span of $F$} & \text{(if $F$ is arithmetic)}\\
	\text{any fixed number $> 0$} &
	\text{(if $F$ is non-arithmetic)} \,.
	\end{cases}
\end{equation}
Recalling \eqref{eq:deriv}, it is natural to look for
a local version of \eqref{eq:SRTint}, namely
\begin{equation} \label{eq:SRT}
	U(x+I) = U((x-h,x]) \sim \sfC \, h \, \frac{A(x)}{x} \quad \text{as} \quad x \to \infty \,.
\end{equation} 
This relation, called \emph{strong renewal theorem (SRT)},
is known to hold in complete generality
under \eqref{eq:tail1} when $\alpha > \frac{1}{2}$, cf.\ \cite{cf:GL,cf:Wil,cf:Eri}.
On the other hand, when $\alpha \le \frac{1}{2}$ there are examples
of $F$ satisfying \eqref{eq:tail1} but not \eqref{eq:SRT}.
It is therefore of great
theoretical and practical interest to find conditions on $F$,
in addition to \eqref{eq:tail1},
ensuring the validity of \eqref{eq:SRT} for $\alpha \le \frac{1}{2}$.

Let us introduce the function
\begin{equation}\label{eq:rr}
	\rr(x) := \frac{F(x+I)}{\overline{F}(x)/x}
	\sim x \, A(x) \, F(x+I) \,.
\end{equation}
By \eqref{eq:tail1}, one expects $\rr(x)$ to be bounded for 
``typical'' values of $x$, although there might be exceptional values for which it
is much larger.
It is by now a classical result that a sufficient condition for the SRT \eqref{eq:SRT}
is the global boundedness of $\rr$:
\begin{equation}\label{eq:doney}
	\sup_{x \ge 0} \, \rr(x) < \infty \,,
\end{equation}
as proved by Doney \cite{cf:D} in the arithmetic case
(extending Williamson \cite{cf:Wil}, who assumed $\alpha > \frac{1}{4}$)
and by Vatutin and Topchii \cite{cf:VT} in the
non-arithmetic case.

More recently \cite{cf:Chi0,cf:Chi}, Chi 
showed that \eqref{eq:doney} can be substantially relaxed, 
through suitable integral criteria.
To mention the simplest \cite[Theorem 1.1]{cf:Chi},
if one defines
\begin{equation}\label{eq:RR}
	\RR_T(a,b) := \int_a^b \big(\rr(y)-T\big)^+
	\, \dd y \,,
	\qquad \text{where} \quad
	z^+ := \max\{z,0\} \,,
\end{equation}
a sufficient condition for the 	SRT \eqref{eq:SRT}, for $\alpha \le \frac{1}{2}$,
is that for some
$\eta \in (0,1)$, $T \in [0,\infty)$
\begin{equation}\label{eq:chi}
\begin{split}
	\RR_T((1-\eta) x, x)
	\underset{x\to\infty}{=}
	\begin{cases}
	o(A(x)^2) & \text{ if } \alpha < \frac{1}{2} \\
	\rule{0pt}{1.6em}
	o\Big(\frac{A(x)^2}{u(x)}\Big) 
	& \text{ if } \alpha = \frac{1}{2}
	\end{cases}
	\qquad \rule{0pt}{1.4em}\text{where} \quad
	\textstyle u(x) := \int_1^x \frac{A(s)^2}{s^2} \, \dd s \,.
\end{split}
\end{equation}
This clearly improves \eqref{eq:doney} (just note that $\RR_T(a,b) \equiv 0$
for $T = \sup_{x\ge 0} \rr(x)$).
We refer to \cite{cf:Chi0,cf:Chi} for a variety of more general
(and more technical) sufficient conditions.

\smallskip

Integral criteria like \eqref{eq:chi} are appealing, because they are very explicit
and can be easily checked in concrete examples.
It is natural to ask whether more refined integral criteria
can provide \emph{necessary and sufficient} conditions
for the SRT \eqref{eq:SRT}. Our main result shows that this is indeed the case,
giving a complete solution to the SRT problem.

\begin{theorem}[Strong Renewal Theorem]\label{th:main}
Let $F$ be a probability on $[0,\infty)$
satisfying \eqref{eq:tail1} with $A \in \cR_\alpha$, for $\alpha \in (0, 1)$.
Define $I = (-h,0]$ with $h > 0$ as in \eqref{eq:I}.
\begin{itemize}
\item If $\alpha  > \frac{1}{2}$, the SRT \eqref{eq:SRT} 
holds with no extra assumption on $F$.

\item If $\alpha \le \frac{1}{2}$,
the SRT \eqref{eq:SRT}  holds if and only if 
\begin{equation}\label{eq:ns12'bis0}
	\lim_{\eta \to 0}
	\left\{ \limsup_{x\to\infty} \frac{1}{A(x)^2}
	\left( 
	\int_{1}^{\eta x} 
	\frac{A(s)^2}{s} \, \rr(x - s) \,
	\dd s 
	\right) 
	\right\} = 0 \,.
\end{equation}
\end{itemize}
For $\alpha < \frac{1}{2}$, setting $\RR_0(a,b) := \int_a^b \rr(y) \, \dd y$
(see \eqref{eq:RR}),
relation \eqref{eq:ns12'bis0} is equivalent to
\begin{equation}\label{eq:nsbis0}
	\lim_{\eta \to 0}
	\left\{ \limsup_{x\to\infty} \frac{1}{A(x)^2}
	\left( \int_1^{\eta x} 
	\frac{A(s)^2}{s^2} \, \RR_0(x-s,x) \, \dd s
	\right) \right\} = 0 \,,
\end{equation}
while for $\alpha = \frac{1}{2}$ relation \eqref{eq:nsbis0} is stronger than
(i.e.\ it implies) \eqref{eq:ns12'bis0}.
\end{theorem}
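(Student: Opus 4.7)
The plan is to expand $U(x+I) = \sum_{n \ge 0}\P(S_n \in x+I)$ and estimate three disjoint ranges of $n$, calibrated against the stable norming sequence $a_n := A^{-1}(n)$. Let $g_\alpha$ denote the density of the one-sided $\alpha$-stable limit $Y_\alpha$ (in the normalization induced by $a_n$). Gnedenko's local limit theorem provides
\begin{equation*}
\P(S_n \in x + I) \;=\; \frac{h}{a_n}\,g_\alpha(x/a_n) \;+\; o(1/a_n) \qquad (n\to\infty),
\end{equation*}
uniformly in $x$. After fixing a small $\eta > 0$, I would split the sum according to whether $a_n \in (\eta x,\,\eta^{-1}x)$ (central range), $a_n \ge \eta^{-1}x$ (large-$n$), or $a_n \le \eta x$ (small-$n$). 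The central range is handled by a Riemann-sum approximation, using $A'(a)\sim\alpha A(a)/a$ from \eqref{eq:deriv} and the substitution $u = x/a$; combined with the appropriate moment identity for $Y_\alpha$, this reproduces the main term $\sfC h\,A(x)/x$. The large-$n$ contribution is negligible as $\eta\to 0$ by the rapid decay of $g_\alpha$ at the origin for one-sided stable laws.

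The heart of the matter is the small-$n$ range, where $\P(S_n \in x+I)$ is dominated by the one-big-jump mechanism and the local CLT is inaccurate. Splitting on $M_n := \max_{i\le n} X_i$, the truncated event $\{M_n \le \delta x\}$ contributes negligibly (via standard estimates for truncated sums), while on the complementary event exchangeability gives
\begin{equation*}
\sum_{n:\,a_n \le \eta x}\P(S_n \in x+I,\,M_n > \delta x) \;\sim\; \int F(\dd y)\,W\bigl((x-y)+I\bigr),
\end{equation*}
where $W(\dd s) := \sum_{k\ge 0}(k+1)\,\P(S_k \in \dd s)$ is the weighted renewal measure. A summation by parts from \eqref{eq:SRTint} (or a direct Riemann-sum argument analogous to the one above) yields $W([0,s]) \sim c_\alpha A(s)^2$, and hence $W(\dd s) \sim c_\alpha'\,(A(s)^2/s)\,\dd s$, for explicit positive constants $c_\alpha,c_\alpha'$. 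Substituting, using $(x-s)A(x-s)\sim xA(x)$ uniformly in $s\le\eta x$, and inserting the definition \eqref{eq:rr} of $\rr$, the small-$n$ contribution becomes asymptotically a positive constant multiple of
\begin{equation*}
\frac{1}{x\,A(x)}\int_1^{\eta x}\frac{A(s)^2}{s}\,\rr(x-s)\,\dd s.
\end{equation*}
The requirement that this be $o(A(x)/x)$ as $\eta \to 0$ is precisely condition \eqref{eq:ns12'bis0}, giving sufficiency. Necessity follows from the same decomposition: since the central range already supplies the full main term $\sfC h A(x)/x$, any extra contribution from the small-$n$ range must be $o(A(x)/x)$ whenever \eqref{eq:SRT} holds. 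For $\alpha > \tfrac12$, a Potter-bound argument on $A(s)^2/s \in \cR_{2\alpha-1}$ (the increasing regime) combined with the Cesàro bound $\int_0^y \rr(z)\,\dd z = O(y)$ (derivable from \eqref{eq:tail1} via integration by parts) shows that the integral above is $O(A(\eta x)^2) = o(A(x)^2)$ as $\eta\to 0$, making \eqref{eq:ns12'bis0} automatic and recovering the Garsia--Lamperti theorem.

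The equivalence of \eqref{eq:ns12'bis0} and \eqref{eq:nsbis0} for $\alpha < \tfrac12$ is a Fubini computation: writing $\RR_0(x-s,x) = \int_0^s \rr(x-u)\,\dd u$ and exchanging the order of integration,
\begin{equation*}
\int_1^{\eta x}\frac{A(s)^2}{s^2}\RR_0(x-s,x)\,\dd s \;=\;\int_0^{\eta x}\rr(x-u)\!\left(\int_{\max(u,1)}^{\eta x}\frac{A(s)^2}{s^2}\,\dd s\right)\dd u.
\end{equation*}
Since $A(s)^2/s^2 \in \cR_{2\alpha-2}$ with $2\alpha-2<-1$, Karamata's theorem identifies the inner integral with $(1-2\alpha)^{-1}A(u)^2/u\,(1+o(1))$ uniformly for $u$ in most of $[1,\eta x/2]$, yielding the equivalence. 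When $\alpha = \tfrac12$ the index becomes $-1$ and the inner integral is slowly varying, diverging to $+\infty$ as $\eta x\to\infty$; the same computation then shows that \eqref{eq:nsbis0} is strictly stronger than \eqref{eq:ns12'bis0}. The main obstacle I anticipate is the sharp two-sided control of the small-$n$ contribution: the cumulative $o(1/a_n)$ LLT error, summed over many indices, must still be $o(A(x)/x)$, and the passage from an upper bound to an exact asymptotic in the one-big-jump expansion requires Doney-type concentration inequalities for truncated walks that are carefully quantified. A secondary difficulty is the $\alpha=\tfrac12$ borderline, where slowly varying logarithmic factors appear in the Karamata estimates and the LLT error must be tracked with extra care.
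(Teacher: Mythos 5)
Your outline correctly identifies the shape of the answer — the small-$n$ contribution concentrates on the ``one big jump'' event and produces an integral of $\rr(x-s)$ against a weighted renewal measure — but the central step as you have written it is circular, and a second structural ingredient is missing entirely.

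The circularity is in the passage from $W([0,s]) \sim c_\alpha A(s)^2$ to $W(\dd s)\sim c_\alpha' (A(s)^2/s)\,\dd s$ as a \emph{local} density. The integrated statement $W([0,s]) \sim c_\alpha A(s)^2$ does follow from \eqref{eq:SRTint} by summation by parts, but the local statement $W(s+I) \sim c_\alpha' h\,A(s)^2/s$ is a strong renewal theorem for the weighted measure $W$ — it is at least as hard as \eqref{eq:SRT} itself and, crucially, it is \emph{false} in general when $\alpha \le \tfrac12$. If it held, the small-$n$ contribution would be $O(\eta^{2\alpha} A(x)/x)$ uniformly in $F$ (by Potter's bounds on $A(s)^2/s$ and the trivial estimate $\int_0^y\rr\,\dd u = O(y)$), and the SRT would follow unconditionally — contradicting the very theorem being proved, since there are known counterexamples. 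What the paper actually establishes and uses is only a one-sided \emph{uniform upper bound}, essentially $\sum_{n\in\N} n\,\P(S_{n-1}\in w+I) \lesssim A(w)^2/w$ (relation \eqref{eq:clai} and its generalization \eqref{eq:rewricon}); this is not an asymptotic, it is proved afresh by the same big-jump machinery (Lemmas~\ref{th:kale} and~\ref{th:nobig} applied with $\delta=1$), and the condition \eqref{eq:ns12'bis0} then emerges precisely because the upper bound on $W(\dd s)$ cannot be matched by a lower bound without an extra hypothesis on $F$. The necessity direction uses a separate \emph{lower} bound, built from a refined local limit theorem for the no-big-jump event (\eqref{eq:Chillt}), and never claims that $W$ has a local density.

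Second, the single ``big jump'' split on $\{M_n > \delta x\}$ is incomplete for small $\alpha$: the paper's Lemma~\ref{th:kale} shows that the number of jumps exceeding the threshold $\xi_{n,x}$ can be as large as $\kappa_\alpha = \lfloor 1/\alpha\rfloor - 1$, and for $\alpha < \tfrac13$ one must sum over events $B_{n,x}^m$ with $m=1,\dots,\kappa_\alpha$. Handling these higher-order jump events is where most of the work in Section~\ref{sec:gen} lies, and it proceeds by a backward induction in $\ell$ (Theorem~\ref{th:main2}) that has no analogue in your sketch. Finally, the paper does not attack $U(x+I)$ directly via an LLT Riemann sum as you propose; it quotes the known reduction that the SRT is equivalent to \eqref{eq:SRTeq}, i.e.\ negligibility of the $\{n\le A(\delta x)\}$ range, which is exactly the part your argument leaves to the circular claim. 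Your $\alpha>\tfrac12$ argument and the Fubini/Karamata computation for the $\alpha<\tfrac12$ versus $\alpha=\tfrac12$ dichotomy between \eqref{eq:ns12'bis0} and \eqref{eq:nsbis0} are both sound; the paper does the latter by integration by parts (Appendix~\ref{sec:maramao}), but the Fubini route reaches the same conclusion.
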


\smallskip

In Section~\ref{sec:refo}
we reformulate conditions \eqref{eq:ns12'bis0}-\eqref{eq:nsbis0}
more explicitly in terms of the probability $F$ (see Lemma~\ref{lem:tec}).
We also present an overview on the strategy of the proof
of Theorem~\ref{th:main}, which is a refinement of the probabilistic approach
of Chi~\cite{cf:Chi0,cf:Chi} and 
allows to treat the arithmetic and non-arithmetic cases in a unified way,
avoiding characteristic functions (except for their implicit use in local limit theorems). 

In the rest of the introduction, after some remarks,
we derive some consequences 
of conditions \eqref{eq:ns12'bis0}-\eqref{eq:nsbis0}, see \S\ref{sec:soco}. 
Then we discuss the case of two-sided random walks,
showing that \emph{condition \eqref{eq:ns12'bis0} is not sufficient
for the SRT}, see \S\ref{sec:two-sided}.

\begin{remark}\rm
A result analogous to Theorem~\ref{th:main} has been independently and simultaneously proved by
Doney~\cite{cf:Don}.
\end{remark}

\begin{remark}\rm
When $\alpha > \frac{1}{2}$ condition \eqref{eq:ns12'bis0} follows
from \eqref{eq:tail1} (see
the Appendix~\S\ref{sec:equi<}).
As a consequence, we can reformulate Theorem~\ref{th:main} as follows:
\emph{assuming \eqref{eq:tail1}, condition \eqref{eq:ns12'bis0} is necessary and sufficient
for the SRT \eqref{eq:SRT} for any $\alpha \in (0,1)$}.\qed
\end{remark}

\begin{remark}\rm
The double limit $x \to \infty$ followed by $\eta \to 0$
can be reformulated as follows:
relations \eqref{eq:ns12'bis0}-\eqref{eq:nsbis0} are equivalent to
asking that, for any fixed function $g(x) = o(x)$,
\begin{align}\tag{\ref{eq:ns12'bis0}'}\label{eq:ns12'bis2}
	\int_{1}^{g(x)} 
	\frac{A(s)^2}{s} \, \rr(x - s) \,
	\dd s 
	& \underset{x\to\infty}{=} o\big(A(x)^2\big) \,,\\
	\tag{\ref{eq:nsbis0}'}\label{eq:nsbis2}
	\int_1^{g(x)} 
	\frac{A(s)^2}{s^2} \, \RR_0(x-s,x) \,
	\dd s 
	& \underset{x\to\infty}{=} o\big(A(x)^2\big) \,,
\end{align}
as an easy contradiction argument shows.\qed
\end{remark}

\begin{remark}\rm
Relations \eqref{eq:ns12'bis0}-\eqref{eq:nsbis0}
contain no cutoff parameter $T$, unlike \eqref{eq:chi}.
This can be introduced replacing
$\rr(x - s)$ by $( \rr(x - s) - T )^+$
and $\RR_0(x-s,x)$ by $\RR_T(x-s,x)$, respectively,
because \eqref{eq:ns12'bis0}-\eqref{eq:nsbis0}
are equivalent to the following:
\begin{align}\tag{\ref{eq:ns12'bis0}''}\label{eq:ns12'bis}
	\exists T \in [0,\infty): \qquad 
	& \lim_{\eta \to 0}
	\left\{ \limsup_{x\to\infty} \frac{1}{A(x)^2}
	\int_{1}^{\eta x} 
	\left( \frac{A(s)^2}{s} \, \big( \rr(x - s) - T \big)^+
	\right) \dd s \right\} = 0 \,,\\
	\tag{\ref{eq:nsbis0}''}\label{eq:nsbis}
	\exists T \in [0,\infty): \qquad 
	& \lim_{\eta \to 0}
	\left\{ \limsup_{x\to\infty} \frac{1}{A(x)^2}
	\int_1^{\eta x} 
	\left( \frac{A(s)^2}{s^2} \, \RR_T(x-s,x) \right) \dd s \right\} = 0 \,.
\end{align}
This is easily checked, by writing
\begin{equation*}
	\rr(x-s) \le T \,+\, \big(\rr(x-s)-T\big)^+ \,,
	\qquad
	\RR_0(x-s,x) \le Ts \,+\, \RR_T(x-s,x) \,,
\end{equation*}
and noting that the terms $T$ and $Ts$
give a negligible contribution to \eqref{eq:ns12'bis0} and \eqref{eq:nsbis0}, respectively,
because by Karamata's Theorem \cite[Proposition~1.5.8]{cf:BinGolTeu}
\begin{equation}\label{eq:basi}
	\int_1^{\eta x} \frac{A(s)^2}{s} \, \dd s \sim \frac{1}{2\alpha} A(\eta x)^2
	\sim \frac{1}{2\alpha} \eta^{2\alpha} A(x)^2 \qquad
	\text{as } x \to \infty\,.
\end{equation}

Nothing is really gained with the cutoff $T$, 
since relations  \eqref{eq:ns12'bis}-\eqref{eq:nsbis} 
are \emph{equivalent} to the $T=0$ versions \eqref{eq:ns12'bis0}-\eqref{eq:nsbis0}.
However,
in concrete examples it is often convenient to use
\eqref{eq:ns12'bis}-\eqref{eq:nsbis},
because they allow to focus one's attention
on the ``large'' values of $\rr$.\qed
\end{remark}

\subsection{Some consequences}
\label{sec:soco}

An immediate corollary of Theorem~\ref{th:main} is the sufficiency
of conditions \eqref{eq:doney} and \eqref{eq:chi} for the SRT \eqref{eq:SRT}.
\begin{itemize}
\item For condition \eqref{eq:doney}, note that it implies \eqref{eq:ns12'bis0},
thanks to \eqref{eq:basi}.

\item For condition \eqref{eq:chi}, note that it implies \eqref{eq:nsbis},
since
$\RR_T(x-s,x) \le \RR_T((1-\eta)x,x)$ and, moreover,
$\lim_{x\to\infty} u(x) = \int_1^{\infty} (A(s)^2/s^2) \, \dd s < \infty$
for $\alpha < \frac{1}{2}$,
because $A(s)^2/s^2$ is regularly varying with index $2\alpha-2 < -1$
(see \cite[Proposition 1.5.10]{cf:BinGolTeu}).
\end{itemize}

More generally, all the sufficient conditions presented in \cite{cf:Chi0,cf:Chi}
can be easily derived from Theorem~\ref{th:main}.
We present alternative
sufficient conditions, in terms of ``smoothness'' properties
of $F$. Observe that, if \eqref{eq:tail1} holds,
for any $s_x = o(x)$ one has
\begin{equation} \label{eq:deca}
	\frac{F((x,x+s_x])}{F((x,\infty))} =
	\frac{\P(X \in (x,x+s_x]) }{\P(X \in (x,\infty)) } \xrightarrow[x\to\infty]{} 0 \,.
\end{equation}
Our next result shows that a suitable polynomial rate of decay in \eqref{eq:deca} ensures the
validity of the SRT \eqref{eq:SRT}.
(Analogous conditions, in a different context, appear in \cite{cf:CSZ}).

\begin{proposition}\label{pr:main1}
Let $F$ be a probability on $[0,\infty)$ satisfying \eqref{eq:tail1}
for some $\alpha \in (0,\frac{1}{2}]$. A sufficient condition for
the SRT \eqref{eq:SRT} is that there is $\epsilon > 0$ such that,
for any $1 \le s_x = o(x)$,
\begin{equation} \label{eq:suff0}
	\frac{F((x,x+s_x])}{F((x,\infty))}
	= O \bigg(\left(\frac{s_x}{x}\right)^{1-2\alpha+\epsilon} \bigg) 
	\qquad \text{as} \quad x \to \infty \,.
\end{equation}
\end{proposition}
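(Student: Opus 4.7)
The plan is to deduce Proposition~\ref{pr:main1} from Theorem~\ref{th:main} by verifying the sufficient condition \eqref{eq:nsbis0}: this implies \eqref{eq:ns12'bis0} (equivalent to it when $\alpha < \tfrac{1}{2}$, strictly stronger when $\alpha = \tfrac{1}{2}$) and hence the SRT \eqref{eq:SRT}. I shall work with its reformulation \eqref{eq:nsbis2}: for every fixed $g(x) = o(x)$,
\[
	\int_1^{g(x)} \frac{A(s)^2}{s^2} \, \RR_0(x-s,x) \, \dd s
	\;=\; o\big(A(x)^2\big) \,.
\]
The substance of the argument is to translate the increment-level decay \eqref{eq:suff0} into a uniform bound on $\RR_0(x-s,x)$.

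\medskip

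Fix $g(x) = o(x)$. A standard diagonal argument upgrades the pointwise reading of \eqref{eq:suff0} to a uniform bound on a scale $\phi(x) = o(x)$; replacing $g$ by $\min(g,\phi)$, one may assume
\[
	F((x, x+s]) \;\le\; C_0 \, \overline F(x) \, (s/x)^{1-2\alpha+\epsilon}
	\qquad \text{for $x$ large and } 1 \le s \le g(x) \,.
\]
Since $\rr(u) \sim u A(u) F((u-h, u])$ by \eqref{eq:rr} and \eqref{eq:tail1}, and $u A(u) \le (1+o(1))\, x A(x)$ for $u \in [x-s,x]$, an application of Fubini to swap the $(u,v)$ integrals yields
\[
	\int_{x-s}^x F((u-h, u]) \, \dd u
	\;=\; \int F(\dd v) \,\big|[x-s, x] \cap [v, v+h)\big|
	\;\le\; h \, F((x-s-h, x]) \,.
\]
Applying the displayed bound at $x' := x-s-h$ with increment $s' := s+h$ (so $s'/x' \sim s/x$) and using $\overline F(x-s-h) \sim 1/A(x)$ from \eqref{eq:tail1}, we conclude
\[
	\RR_0(x-s,x) \;\le\; C_1 \, s^{1-2\alpha+\epsilon} \, x^{2\alpha-\epsilon}
	\qquad \text{uniformly for } 1 \le s \le g(x) \,.
\]

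\medskip

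Substituting into \eqref{eq:nsbis2} and writing $A(s)^2 = s^{2\alpha} L(s)^2$, the integrand is at most $C_1 \, x^{2\alpha-\epsilon} s^{-1+\epsilon} L(s)^2$. Since $\epsilon > 0$, Karamata's theorem \cite[Prop.~1.5.8]{cf:BinGolTeu} gives
\[
	\int_1^{g(x)} s^{-1+\epsilon} L(s)^2 \, \dd s
	\;\sim\; \frac{g(x)^\epsilon L(g(x))^2}{\epsilon} \,.
\]
Dividing by $A(x)^2 = x^{2\alpha} L(x)^2$, the quantity under scrutiny is bounded by a constant multiple of $(g(x)/x)^\epsilon \cdot L(g(x))^2/L(x)^2$, and Potter's bounds with exponent $\epsilon/4$ give $L(g(x))^2/L(x)^2 = O((x/g(x))^{\epsilon/2})$, so the product is $O((g(x)/x)^{\epsilon/2}) \to 0$. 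This verifies \eqref{eq:nsbis2} and yields the SRT. The only delicate point is the Fubini reduction of $\int \rr$ to a single tail-increment of $F$, together with the uniformity upgrade of \eqref{eq:suff0}; the rest is bookkeeping with Karamata estimates.
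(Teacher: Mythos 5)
Your argument is correct and follows essentially the paper's route: upgrade \eqref{eq:suff0} to a uniform bound and plug it into a Karamata estimate of the relevant integral criterion; the paper verifies \eqref{eq:ns} directly in terms of $\P(X\in(x-s,x])$, whereas you verify \eqref{eq:nsbis2} in terms of $\RR_0$, which forces an extra Fubini step that essentially reverses part of Lemma~\ref{lem:tec}. One point needs sharpening: the contradiction (diagonal) argument in fact gives $F((x,x+s])\lesssim \overline F(x)\,(s/x)^{1-2\alpha+\epsilon}$ uniformly for \emph{all} $1\le s\le x$ (this is exactly \eqref{eq:suff}), and it is this full range you should invoke --- your statement that the bound holds only up to a fixed scale $\phi(x)=o(x)$, together with replacing $g$ by $\min(g,\phi)$, would verify \eqref{eq:nsbis2} only for $g\le\phi$, while the equivalence with \eqref{eq:nsbis0} requires it for every $g(x)=o(x)$; the full uniformity is immediate since whenever $s/x$ is bounded away from zero the ratio $F((x,x+s])/\overline F(x)\le 1$ is trivially of the required order.
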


We finally focus on the case $\alpha = \frac{1}{2}$.
Our next result unravels this case,
by stating under which conditions on $A(x)$ the SRT \eqref{eq:SRT}
holds with no further assumption on $F$ than \eqref{eq:tail1}
(like it happens for $\alpha > \frac{1}{2}$).
Given a function $L$, let us define
\begin{equation}\label{eq:L*}
	L^*(x) := \sup_{1 \le s \le x} \, L(s) \,.
\end{equation}

\begin{theorem}[Case $\alpha = \frac{1}{2}$]\label{th:1/2}
Let $F$ be a probability on $[0,\infty)$ satisfying \eqref{eq:tail1}
with $\alpha = \frac{1}{2}$, that is $A \in \cR_{1/2}$.
Write $A(x) = L(x) \sqrt{x}$,
where $L \in \cR_0$ is slowly varying.
\begin{itemize}
\item If $A(x)$ satisfies the following condition:
\begin{equation}\label{eq:cond}
	L^*(x) \underset{x\to\infty}{=} O(L(x)) \,, 
\end{equation}
the SRT \eqref{eq:SRT} holds with no extra assumption on $F$.

\item If condition \eqref{eq:cond} fails, there are examples of $F$ 
for which the SRT \eqref{eq:SRT} fails. 
\end{itemize}
\end{theorem}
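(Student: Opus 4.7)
My plan is to prove the first (positive) statement by invoking Theorem~\ref{th:main}: for $\alpha = \frac{1}{2}$ it suffices to verify condition \eqref{eq:ns12'bis0}. Writing $A(s)^2/s = L(s)^2$ and recalling \eqref{eq:cond}, there is $C < \infty$ such that $L(s) \le L^*(x) \le C L(x)$ for every $1 \le s \le x$ with $x$ large; in particular, for all $s \in [1, \eta x]$,
\begin{equation*}
	\frac{A(s)^2}{s} = L(s)^2 \le C^2 L(x)^2 = C^2 \frac{A(x)^2}{x}.
\end{equation*}
Factoring this uniform bound out yields
\begin{equation*}
	\frac{1}{A(x)^2} \int_1^{\eta x} \frac{A(s)^2}{s}\, \rr(x-s)\, \dd s \;\le\; \frac{C^2}{x} \int_1^{\eta x} \rr(x-s)\, \dd s \,,
\end{equation*}
and the whole task reduces to showing the right-hand side is $O(\eta)$, uniformly for large $x$.

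To bound $\int_1^{\eta x} \rr(x-s)\,\dd s$, I substitute $u = x - s$ and use $\rr(u) = u\, A(u)\, F(u + I)$. On $u \in [(1-\eta)x,\, x-1]$, regular variation gives $u A(u) \le (1+o(1))\, x A(x)$, while a Fubini swap yields
\begin{equation*}
	\int_{(1-\eta)x}^{x-1} F((u-h, u])\, \dd u \;\le\; h \bigl( \overline F((1-\eta)x - h) - \overline F(x-1) \bigr).
\end{equation*}
By \eqref{eq:tail1} together with slow variation of $L$, the right-hand side equals $h\,((1-\eta)^{-1/2}-1)/A(x) \cdot (1+o(1)) = O(\eta/A(x))$. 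Hence $\int_1^{\eta x}\rr(x-s)\,\dd s = O(\eta x)$, so the quantity inside the limit in \eqref{eq:ns12'bis0} is $O(\eta)$, vanishing as $\eta \to 0$, and the SRT follows.

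For the second statement, I would construct, whenever \eqref{eq:cond} fails, a probability $F$ satisfying \eqref{eq:tail1} but violating the SRT. Fix a subsequence $x_n$ along which $L^*(x_n)/L(x_n) \to \infty$, and points $y_n \in [1, x_n]$ with $L(y_n) \gg L(x_n)$. The idea is to build $F$ with atoms (or narrowly concentrated mass) placed on a carefully chosen geometric-type sequence tied to the $y_n$, with weights designed so that the tail still satisfies $\overline F(x) \sim 1/A(x)$ but the iterated convolutions $\P(S_k \in x_n + I)$ for appropriate $k$ accumulate, yielding $U(x_n + I)/(\sfC\, h\, A(x_n)/x_n) \to \infty$. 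The main obstacle is the simultaneous reconciliation of these two requirements: the tail must match the regularly varying target while the convolution structure must conspire to make $U(x_n+I)$ anomalously large along the subsequence. This is precisely where the unbounded ratio $L^*/L$ is exploited: the ``peak'' values of $L$ at the $y_n$ allow one to concentrate enough mass there to blow up the renewal measure at the ``thin'' scale $x_n$ without disturbing the asymptotic tail.
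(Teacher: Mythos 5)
The first part of your argument is sound and follows essentially the same route as the paper: you factor out the uniform bound $A(s)^2/s = L(s)^2 \lesssim L(x)^2 = A(x)^2/x$ implied by \eqref{eq:cond}, reducing \eqref{eq:ns12'bis0} to the statement $\RR_0((1-\eta)x,x) = O(\eta x)$, and then establish this via the Fubini interchange $\int_{(1-\eta)x}^{x} F(u+I)\,\dd u \le h\,F\big(((1-\eta)x-h,x]\big)$ together with regular variation of $\overline F$. The paper packages the same computation slightly differently, going through the intermediate condition \eqref{eq:suffcond1/2} derived from \eqref{eq:nsbis}, but the substance is identical.

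The second part, however, is not a proof; it is a plan, and it misses the key structural simplification. You propose to build $F$ so that $U(x_n+I)$ is directly shown to be anomalously large by tracking the convolutions $\P(S_k \in x_n+I)$, and you flag the ``reconciliation of the two requirements'' (correct heavy tail versus extra peak mass) as the main unresolved obstacle. The paper sidesteps the need to control $U$ at all: by the necessity half of Theorem~\ref{th:main}, it is enough to build $F$ satisfying \eqref{eq:tail1} but \emph{violating} \eqref{eq:ns12'bis0}, and the paper then isolates the required construction in a self-contained Lemma (Lemma~\ref{lem:uao}): given any $z_n\to\infty$ and $\epsilon_n\to 0$, one can mix a smooth background law $F_1$ with $\overline{F_1}(x)\sim 2/A(x)$ and a point-mass law $F_2$ carried by a geometrically thinned subsequence $(z_{n_k})$ with $F_2(\{z_{n_k}\}) \asymp \epsilon_{n_k}/A(z_{n_k})$, so that $F = \tfrac12(F_1+F_2)$ still has $\overline F \sim 1/A$. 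Choosing $s_n$ with $L(s_n) = L^*(x_n)$ (necessarily $s_n=o(x_n)$ by uniform convergence), $z_n := x_n-s_n$, and $\epsilon_n := L(x_n)/L^*(x_n)$ then makes the atom of $F$ at $x_n-s_n$, weighted by $A(s_n)^2/s_n = L^*(x_n)^2$, contribute an amount $\gtrsim \zeta_{n_k}\,A(x_{n_k})/x_{n_k}$ to the integral in \eqref{eq:ns12}, which blows up. Your sketch correctly identifies that the atom should sit where $L$ peaks and that the failure of \eqref{eq:cond} furnishes the slack, but you neither construct the law nor verify the two requirements; moreover, your description of placing mass ``tied to the $y_n$'' should instead be mass at $x_n - s_n$ (the point where the relevant convolution integrand is evaluated), not at $s_n$ itself. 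I recommend you (i) invoke the necessity direction of Theorem~\ref{th:main} to replace ``SRT fails'' by ``\eqref{eq:ns12'bis0} fails,'' and (ii) carry out the two-component mixture construction explicitly, with a geometric thinning of the subsequence to make the peak measure summable without disturbing the tail asymptotics.
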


\begin{remark}\rm
Condition \eqref{eq:cond} is satisfied, in particular, when $A(x) \sim c \sqrt{x}$ for some
$c \in (0,\infty)$,
hence the SRT \eqref{eq:SRT} holds with no extra assumption on $F$, in this case.
\end{remark}

In order to understand how \eqref{eq:cond} arises,
we bound the integral in \eqref{eq:nsbis} from above by
$L^*(x)^2 R((1-\eta)x,x)$, hence a sufficient condition for the SRT \eqref{eq:SRT} is
\begin{equation}\label{eq:suffcond1/2}
	\exists T \in [0,\infty): \qquad
	\lim_{\eta \to 0} \left(
	\limsup_{x\to\infty} \,
	\frac{\displaystyle L^*(x)^2}{A(x)^2} \;
	\RR_T\big( (1-\eta)x,x \big) \right) = 0 \,,
\end{equation}
and a slightly weaker (but more explicit)
sufficient condition is
\begin{equation}\label{eq:suffcond1/2bis}
	\exists \eta \in (0,1), \ T \in [0,\infty) : \qquad
	\RR_T\big( (1-\eta)x,x \big) \underset{x\to\infty}{=}
	o \left( \frac{A(x)^2}{\displaystyle L^*(x)^2} \right) \,.
\end{equation}
It is worth observing that \eqref{eq:suffcond1/2}-\eqref{eq:suffcond1/2bis} 
refine Chi's condition \eqref{eq:chi} for $\alpha = \frac{1}{2}$,
because it is easy to show that $u(x) \ge c \, L^*(x)^2$ for some $c \in (0,\infty)$.

\subsection{Beyond renewal processes}
\label{sec:two-sided}

It is natural to consider the two-sided version of \eqref{eq:tail1},
i.e. to take a probability $F$ on the real line $\R$ which 
is in the domain of attraction of a stable law with index $\alpha \in (0,1)$
and positivity parameter $\rho \in (0,1]$.
More explicitly, setting $F(x) := F((-\infty,x])$ and $\overline{F}(x) :=
F((x,\infty))$, assume that
\begin{equation}\label{eq:tail2}
	\overline{F}(x) \sim \frac{p}{A(x)} \qquad \text{and} \qquad
	F(-x) \sim \frac{q}{A(x)} \qquad \text{as} \quad x \to \infty \,,
\end{equation}
where $A \in \cR_\alpha$ and $p > 0$, $q \ge 0$ are finite constants.
As usual, let $S_n = X_1 + \ldots + X_n$ be the random walk associated
to $F$ and define the renewal measure $U(\cdot)$ as in \eqref{eq:U}.

The ``integrated'' renewal theorem \eqref{eq:SRTint} still holds
(with a different value of $\sfC = \sfC(\alpha,\rho)$)
and, for $\alpha > \frac{1}{2}$,  the SRT \eqref{eq:SRTeq} follows again
by \eqref{eq:tail2} with no additional assumptions
cf.\ \cite{cf:GL,cf:Wil,cf:Eri,cf:Eri2}
(we give an independent proof in Section~\ref{sec:>12}).

For $\alpha \le \frac{1}{2}$, our next result gives a necessary condition for the SRT, which
is shown to be strictly stronger than \eqref{eq:ns12'bis0},
when $q > 0$.
This means that \emph{condition \eqref{eq:ns12'bis0} is not sufficient
for the SRT} in the two-sided case \eqref{eq:tail2}.

\begin{theorem}[Two-sided case]\label{th:cont}
Let $F$ be a probability on $\R$ satisfying \eqref{eq:tail2} for some
$A \in \cR_\alpha$, with $\alpha \in (0,1)$ and $p > 0$, $q \ge 0$.
Define $I = (-h,0]$ with $h > 0$ as in \eqref{eq:I}.
\begin{itemize}
\item If $\alpha  > \frac{1}{2}$, the SRT \eqref{eq:SRT} 
holds with no extra assumption on $F$.

\item If $\alpha \le \frac{1}{2}$, a necessary condition for the SRT \eqref{eq:SRT} is the following:
\begin{equation}\label{eq:ns12'bis0two}
	\lim_{\eta \to 0}
	\left\{ \limsup_{x\to\infty} \frac{1}{A(x)^2}
	\left( 
	\int_{1}^{\eta x} 
	\frac{A(s)^2}{s} \, \big( \rr(x - s) +
	\ind_{\{q > 0\}} \rr(x + s) \big) \,
	\dd s 
	\right) 
	\right\} = 0 \,.
\end{equation}
There are examples of $F$ satisfying \eqref{eq:ns12'bis0} but not \eqref{eq:ns12'bis0two},
for which the SRT fails.
\end{itemize}
\end{theorem}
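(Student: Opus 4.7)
The plan handles the three parts in turn.  For $\alpha > \tfrac12$, I would invoke Section~\ref{sec:>12}: the argument there rests only on the stable local limit theorem and the regular variation of $A$, both of which remain valid under \eqref{eq:tail2} without any sign restriction, so the two-sided case requires no modification.

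For the necessity of \eqref{eq:ns12'bis0two} when $\alpha \le \tfrac12$, I would adapt the one-sided necessity proof of Theorem~\ref{th:main}, isolating the new contribution that appears when $q>0$. Introduce the auxiliary size-biased measure
\begin{equation*}
	V(J) \;:=\; \sum_{m \ge 1} m\,\P(S_m \in J) \,,
\end{equation*}
for which a local limit theorem computation (with $\alpha < \tfrac12$ ensuring convergence of the relevant tail series, or a slowly-varying adjustment at $\alpha = \tfrac12$) gives $V(\pm s + I)\sim c_{\pm}\,h\,A(s)^2/s$ as $s\to\infty$, with $c_+ > 0$ always and $c_- > 0$ iff $q > 0$ (since the stable limit density is strictly positive on $(-\infty,0)$ iff $q>0$). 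Exchangeability yields the key identity
\begin{equation*}
	\sum_{n \ge 1} \sum_{i=1}^n \P(X_i \in u+I,\,S_n\in x+I)
	\;=\; F(u+I)\bigl(U+V\bigr)(x-u+I)\,,
\end{equation*}
and a Bonferroni-type argument that subtracts the negligible over-count from windows containing two or more ``big jumps'' converts it into the lower bound
\begin{equation*}
	U(x+I)\;\gtrsim\;\sum_{u_0} F(u_0+I)\,V(x-u_0+I)
\end{equation*}
along a disjoint partition $\{u_0+I\}$. Restricting $u_0$ to $((1{-}\eta)x,x)$ with parameter $s = x-u_0$ and to $(x,(1{+}\eta)x)$ with parameter $s = u_0-x$ (the latter contributing only when $q>0$), and using $F(u_0+I)\sim \rr(u_0)/(u_0 A(u_0))$ with $u_0 A(u_0)\sim x A(x)$, one rewrites the right-hand side as a constant multiple of
\begin{equation*}
	\frac{1}{x\,A(x)}\int_{0}^{\eta x}\bigl(\rr(x-s)+\ind_{\{q>0\}}\rr(x+s)\bigr)\frac{A(s)^2}{s}\,\dd s \,.
\end{equation*}
Matched against the SRT asymptotic $U(x+I)\sim \sfC\,h\,A(x)/x$, this forces the above quantity to be $o(A(x)/x)$ as $\eta\to 0$, which rearranges to exactly~\eqref{eq:ns12'bis0two}.

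For the counterexample I would take an arithmetic $F = \tfrac12(F_+ + F_-)$ on $\Z$, with $F_-$ a ``clean'' distribution on $-\N$ satisfying \eqref{eq:tail2} with $q = \tfrac12$, and $F_+$ on $\N$ obtained by perturbing a clean distribution with anomalously large atoms at a sparse rapidly growing sequence $y_k\to\infty$ (say $y_k/y_{k-1}\to\infty$), tuned so that $\rr(y_k)$ is arbitrarily large while $\rr$ remains bounded on $\Z\setminus\{y_k\}$. For the test points $x_k := y_k - s_k$ with $s_k := \eta y_k$, the integral $\int_1^{\eta x_k}\rr(x_k-s)\,A(s)^2/s\,\dd s$ only sees typical values of $\rr$, so \eqref{eq:ns12'bis0} holds; whereas $\int_1^{\eta x_k}\rr(x_k+s)\,A(s)^2/s\,\dd s$ picks up the spike at $s=s_k$, contributing $\sim \rr(y_k)\,A(s_k)^2/s_k$, which by a large enough choice of $\rr(y_k)$ dominates $A(x_k)^2$ and thereby violates \eqref{eq:ns12'bis0two}. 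The necessity established above then ensures that the SRT fails for this~$F$. The main technical hurdle is the sharp asymptotic $V(\pm s+I)\sim c_\pm h A(s)^2/s$, which requires careful control of the local limit theorem uniformly in $s/a_m$, together with the extraction of the $o(A(x)^2)$ (rather than merely $O(A(x)^2)$) refinement: this last step requires cleanly separating the ``typical'' LCLT contribution to $U(x+I)$ (matching the SRT's leading term $\sfC h A(x)/x$) from the ``big jump'' correction, so that the latter alone is squeezed to $o(A(x)/x)$ by the SRT's exact asymptotic equivalence.
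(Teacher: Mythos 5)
Your plan for the first bullet is exactly the paper's: Section~\ref{sec:>12} never uses positivity of the increments, so the $\alpha>\frac12$ case carries over verbatim. Your plan for the necessity of \eqref{eq:ns12'bis0two} is recognizable as a variant of the paper's Appendix~\ref{sec:necce}, but more heavily engineered: the paper never needs the sharp asymptotic $V(\pm s+I)\sim c_\pm h A(s)^2/s$ (whose proof would require a delicate uniform-in-$s$ LCLT analysis, with the tail series $\sum_m m\,\phi(s/a_m)/a_m$ converging only for $\alpha<\frac12$), only the one-sided lower bound $\sum_{A(s/2)\le m\le A(s)}m\,\P(S_m\in \pm s+J)\gtrsim A(s)^2/s$, which follows from the refined LCLT \eqref{eq:Chillt}; and rather than a Bonferroni subtraction of the multi-window over-count, the paper conditions directly on the single-big-jump event $\{X_n>x/2,\max_{j<n}X_j\le x/2\}$, which is automatically disjoint in $y=X_n$ and already respects $\max X_i\le Ca_n$ in \eqref{eq:Chillt}. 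Your route could likely be made rigorous, but you are proving more than you need.

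The genuine gap is in the counterexample. An isolated spike cannot separate \eqref{eq:ns12'bis0} from \eqref{eq:ns12'bis0two}. Suppose $F$ has one anomalously heavy atom at $y_k$ of mass making $\rr(y_k)$ large. For your test points $x_k=y_k-s_k$ the atom contributes $\asymp\rr(y_k)\,A(s_k)^2/s_k$ to $\int_1^{\eta x_k}\rr(x_k+s)\,A(s)^2/s\,\dd s$; making this $\gtrsim A(x_k)^2$ forces $\rr(y_k)\gtrsim A(y_k)^2\,s_k/A(s_k)^2$. But \eqref{eq:ns12'bis0} is a $\limsup$ over \emph{all} $x$, and at $x'=y_k+1$ the same atom contributes $\asymp\rr(y_k)\,A(1)^2$ to $\int_1^{\eta x'}\rr(x'-s)\,A(s)^2/s\,\dd s$. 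Since $s_k/A(s_k)^2=s_k^{1-2\alpha}/L(s_k)^2\to\infty$ for $\alpha<\frac12$, this gives $\rr(y_k)\gg A(y_k)^2\simeq A(x')^2$, so \eqref{eq:ns12'bis0} fails at $x'$ for every $\eta>0$. In short: a spike heavy enough to violate \eqref{eq:ns12'bis0two} from the left is even heavier relative to $A^2$ when viewed from the right, so it violates \eqref{eq:ns12'bis0} as well. The needed left/right asymmetry cannot come from the strength of a single atom; it must come from the \emph{geometry} of the support. The paper's construction uses, near each $x_n=2^n$, a whole cluster $E_n=\{x_n+z_k:0\le k<\hat k_n\}$ with $z_k=k^{1/(1-2\alpha)}$ and roughly equal (individually sub-critical, $\rr\asymp A^2/\sqrt{\log}$) masses: scanning rightward from the cluster head $x_n$, the weights $A(z_k)^2/z_k\asymp 1/k$ accumulate a logarithmic divergence, whereas from any point inside or past the cluster the increasing spacings $z_{\bar k}-z_k\ge \bar k^{2\alpha/(1-2\alpha)}(\bar k-k)$ keep $\sum_k A(z_{\bar k}-z_k)^2/(z_{\bar k}-z_k)$ uniformly bounded. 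That asymmetry between ``dense near the head, sparse near the tail'' is what your proposal is missing.
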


It is not clear whether \eqref{eq:ns12'bis0two} is also sufficient
for the SRT, or whether additional conditions (possibly on the
left tail of $F$) need to be imposed.

\subsection{Structure of the paper}

The paper is organized as follows.

\begin{itemize}
\item In Section~\ref{sec:prepa} we recall some standard background results.

\item In Section~\ref{sec:refo} we 
reformulate conditions \eqref{eq:ns12'bis0}-\eqref{eq:nsbis0}
and \eqref{eq:ns12'bis0two} more explicitly
in terms of $F$ (see Lemma~\ref{lem:tec})
and we describe the general strategy
underlying the proof of Theorem~\ref{th:main},
which is carried out in the following Sections~\ref{sec:>12}, \ref{sec:first} and~\ref{sec:gen}.

\item In Section~\ref{sec:prmain} we prove Proposition~\ref{pr:main1}
and Theorems~\ref{th:1/2} and~\ref{th:cont},
while the Appendix~\ref{sec:refoapp} contains the proofs of some 
auxiliary results.
\end{itemize}

\section{Setup}
\label{sec:prepa}

\subsection{Notation}

We write $f(s) \lesssim g(s)$ or $f \lesssim g$ to mean $f(s) = O(g(s))$, i.e.\
for a suitable constant $C < \infty$ one has
$f(s) \le C \, g(s)$ for all $s$ in the range under consideration.
The constant $C$ may depend on the probability $F$ (in particular, on $\alpha$)
and on $h$.
When some extra
parameter $\epsilon$ enters the constant $C = C_\epsilon$,
we write $f(s) \lesssim_\epsilon g(s)$.
If both $f \lesssim g$ and $g \lesssim f$, we write $f \simeq g$.
We recall that $f(s) \sim g(s)$ means $\lim_{s\to\infty} f(s)/g(s) = 1$.

\subsection{Regular variation}
\label{eq:regvar}
We recall that $A : [0,\infty) \to (0,\infty)$ in \eqref{eq:tail1} is
assumed to be differentiable, strictly increasing and such that \eqref{eq:deriv} holds.
For definiteness, let us fix $A(0) := \frac{1}{2}$ and $A(1) := 1$,
so that both $A$ and $A^{-1}$ map $[1,\infty)$ onto itself.

We observe that, by Potter's bounds, for
every $\epsilon > 0$ one has
\begin{equation} \label{eq:Potter}
	\rho^{\alpha+\epsilon} \lesssim_\epsilon \frac{A(\rho x)}{A(x)}
	\lesssim_\epsilon
	\rho^{\alpha-\epsilon} \,, \qquad
	\forall \rho \in (0,1], \ x \in (0,\infty)
	\ \text{ such that } \ \rho x \ge 1 \,.
\end{equation}
More precisely, part (i) of \cite[Theorem 1.5.6]{cf:BinGolTeu} shows that
relation \eqref{eq:Potter} holds for $\rho x \ge \bar x_\epsilon$, for
a suitable $\bar x_\epsilon < \infty$; the extension to
$1 \le \rho x \le \bar x_\epsilon$ follows as in part (ii) of the same theorem,
because $A(y)$ is bounded away from zero and infinity for $y \in [1, \bar x_\epsilon]$.

We also recall Karamata's Theorem
\cite[Proposition~1.5.8]{cf:BinGolTeu}: 
\begin{equation}\label{eq:Kar}
	\text{if } f(n) \in \cR_\zeta \text{ with } \zeta > -1: \qquad
	\sum_{n \le t} f(n) 
	\underset{t \to \infty}{\sim} \frac{1}{\zeta+1} \, t \, f(t) \,.
\end{equation}
As a matter of fact, this relation holds also in the limiting case $\zeta = -1$, in the sense that
$t f(t) = o(\sum_{n=1}^t f(n))$, by \cite[Proposition~1.5.9a]{cf:BinGolTeu}.

\subsection{Local limit theorems}
\label{sec:llt}

We call a probability $F$ on $\R$
\emph{lattice} if it is supported by $v\Z+a$ for some $v > 0$ and $0 \le a < v$,
and the maximal value of $v>0$ with this property is called the 
\emph{lattice span} of $F$.
If $F$ is arithmetic
(i.e.\ supported by $h\Z$, cf.\ \S\ref{sec:setting}),
then it is also lattice, but the spans might differ
(for instance, $F(\{-1\}) = F(\{+1\}) = \frac{1}{2}$ has 
arithmetic span $h=1$ and lattice span $v=2$).
A lattice distribution is not necessarily
arithmetic.\footnote{If $F$ is lattice, say
supported by $v\Z+a$
where $v$ is the lattice span and $a \in [0,v)$, then $F$ is arithmetic if and only if 
$a/v \in \Q$, in which case its arithmetic span equals
$h = v / m$ for some $m\in\N$.}

Let us define
\begin{equation*}
	a_n := A^{-1}(n) \,, \qquad n \in\N_0 \,,
\end{equation*}
so that $a_n \in \cR_{1/\alpha}$. Under \eqref{eq:tail1}
or, more generally, \eqref{eq:tail2}, $S_n/a_n$ converges
in distribution as $n\to\infty$ toward a stable
law, whose density we denote by $\phi$. If we set
\begin{equation}\label{eq:J}
	J = (-v,0] \qquad \text{with} \qquad v =
	\begin{cases}
	\text{lattice span of $F$} & \text{(if $F$ is lattice)}\\
	\text{any fixed number $> 0$} &
	\text{(if $F$ is non-lattice)} 
	\end{cases}\,,
\end{equation}
by Gnedenko's
and Stone's local limit theorems \cite[Theorems~8.4.1 and~8.4.2]{cf:BinGolTeu}
we have
\begin{equation}\label{eq:llt}
	\lim_{n\to\infty} \, \sup_{x \in \R}
	\left| a_n \, \P(S_n \in x+J) - v \, \phi \left( \frac{x}{a_n} \right)
	\right| = 0 \,.
\end{equation}
Since $\sup_{z\in\R} \phi(z) < \infty$, we obtain the useful estimate
\begin{equation}\label{eq:sup}
	\sup_{z\in\R} \P(S_n \in (x-w, x]) \lesssim_w \frac{1}{a_n} \,,
\end{equation}
which, plainly, holds for \emph{any} fixed $w > 0$ (not necessarily the lattice span of $F$).

Besides the local limit theorem \eqref{eq:llt}, a key tool in the proof will be
a local large deviations estimate
by Denisov, Dieker and Shneer \cite[Proposition 7.1]{cf:DDS}
(see \eqref{eq:DDS} below).

\smallskip
\section{Proof of Theorem~\ref{th:main}: strategy}
\label{sec:refo}

We start reformulating the key conditions \eqref{eq:ns12'bis0}-\eqref{eq:nsbis0}
and \eqref{eq:ns12'bis0two}
more explicitly in terms of $F$.
We recall that $X$ denotes a random variable with law $F$.
The next Lemma is proved
in Appendix~\ref{sec:tec}.

\begin{lemma}\label{lem:tec}
Assuming \eqref{eq:tail1}, condition \eqref{eq:ns12'bis0} is equivalent to
\begin{equation}\label{eq:ns12}
	\lim_{\eta \to 0}
	\left( \limsup_{x\to\infty} \frac{x}{A(x)}
	\int_{s \in [1,\eta x)} 
	\frac{A(s)^2}{s} \, \P(X \in x - \dd s) \right) = 0 \,,
\end{equation}
where we set $\P(X \in x - \dd s) := \P(x - X \in \dd s)$,
and condition \eqref{eq:nsbis0} is equivalent to
\begin{equation}\label{eq:ns}
	\lim_{\eta \to 0}
	\left( \limsup_{x\to\infty} \frac{x}{A(x)}
	\int_1^{\eta x} 
	\frac{A(s)^2}{s^2} \, \P(X \in (x-s,x]) \, \dd s \right) = 0 \,.
\end{equation}

Analogously, assuming \eqref{eq:tail2},
condition \eqref{eq:ns12'bis0two} is equivalent to
\begin{equation}\label{eq:ns12two}
	\lim_{\eta \to 0}
	\left( \limsup_{x\to\infty} \frac{x}{A(x)}
	\int_{s \in [1,\eta x)} 
	\frac{A(s)^2}{s} \, \big( \P(X \in x - \dd s)
	+ \ind_{\{q>0\}}\P(X \in x + \dd s) \big) \right) = 0 \,,
\end{equation}
\end{lemma}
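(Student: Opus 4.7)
The equivalences are obtained by unfolding the definition of $\rr$ and swapping orders of integration via Fubini, with uniform replacements of $A$ and $x$ controlled by Potter's bounds. Concretely, to prove \eqref{eq:ns12'bis0} $\Leftrightarrow$ \eqref{eq:ns12}, I would first apply \eqref{eq:rr} to write $\rr(x-s) \sim (x-s) A(x-s) \P(X \in (x-s-h, x-s])$, which holds uniformly for $s \in [1, \eta x]$ because $x - s \ge (1-\eta) x \to \infty$. By Potter's bounds \eqref{eq:Potter}, the ratio $(x-s) A(x-s)/(xA(x))$ is sandwiched between $(1-\eta)^{1+\alpha+\epsilon}$ and $(1-\eta)^{1+\alpha-\epsilon}$ uniformly in $s \in [1, \eta x]$, hence converges to $1$ as $\eta \to 0$. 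Pulling $xA(x)/A(x)^2 = x/A(x)$ out, the left-hand side of \eqref{eq:ns12'bis0} becomes, up to $1 + o_\eta(1)$,
\begin{equation*}
	\frac{x}{A(x)} \int_1^{\eta x} \frac{A(s)^2}{s} \, \P\big(X \in (x-s-h, x-s]\big) \, \dd s.
\end{equation*}

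The second step is to convert this integral of interval probabilities into the integral against the measure $\P(X \in x - \dd t)$ appearing in \eqref{eq:ns12}. Writing $\P(X \in (x-s-h, x-s]) = \int_{[s, s+h)} \P(X \in x - \dd t)$ and applying Fubini, I obtain
\begin{equation*}
	\int_{[1, \eta x + h)} \left( \int_{\max(1, t-h)}^{\min(\eta x, t)} \frac{A(s)^2}{s} \, \dd s \right) \P(X \in x - \dd t).
\end{equation*}
By regular variation the inner integral is $\sim h \cdot A(t)^2/t$ on the bulk $t \in [1+h, \eta x]$, and bounded by the same quantity on the boundary ranges of length $O(h)$ near $t = 1$ and $t = \eta x$. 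This yields \eqref{eq:ns12} up to the harmless factor $h$. The converse direction is the symmetric smoothing $A(t)^2/t \sim h^{-1} \int_t^{t+h} A(u)^2/u \, \dd u$ followed by the same Fubini swap, which reproduces the interval-probability integral and hence \eqref{eq:ns12'bis0}.

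For \eqref{eq:nsbis0} $\Leftrightarrow$ \eqref{eq:ns}, the identical strategy applies once one computes $\int_{x-s}^x \P(X \in (y-h, y]) \, \dd y = h \, \P(X \in (x-s, x]) + O(\overline F(x))$ via an elementary Fubini on $y$, the $O(\overline F(x))$ accounting for the boundary contributions from $z \in (x-s-h, x-s] \cup (x-h, x]$; these contribute only terms that vanish in $\limsup_{x\to\infty}$ after the prefactor $1/A(x)^2$, since $\overline F(x) \sim 1/A(x)$ and $\int_1^{\eta x} A(s)^2/s^2 \, \dd s$ is under control in all regimes $\alpha \le \frac{1}{2}$. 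The two-sided equivalence \eqref{eq:ns12'bis0two} $\Leftrightarrow$ \eqref{eq:ns12two} proceeds in the same way, the extra term $\rr(x+s) \sim (x+s) A(x+s) \P(X \in (x+s-h, x+s])$ producing the measure $\P(X \in x + \dd t)$ after Fubini. None of the individual steps is deep; the main obstacle is the bookkeeping of the boundary ranges in all the Fubini swaps and the verification that the uniform replacement $(x-s) A(x-s) \to x A(x)$ survives the double limit $x \to \infty$ followed by $\eta \to 0$, which is why the argument is deferred to the appendix.
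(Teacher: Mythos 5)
The overall route you take—unfold $\rr$, replace $(x-s)A(x-s)$ by $xA(x)$ uniformly via Potter's bounds, then swap orders of integration and smooth $A(s)^2/s$ over a window of length $h$—is exactly the strategy of the paper's Appendix~\S\ref{sec:tec}. However, there is a genuine gap in your treatment of the boundary terms in the $\eqref{eq:nsbis0}\Leftrightarrow\eqref{eq:ns}$ equivalence, and the bound you invoke is not merely imprecise but actually fails.

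You claim $\int_{x-s}^x \P(X\in(y-h,y])\,\dd y = h\,\P(X\in(x-s,x])+O(\overline F(x))$ and that the $O(\overline F(x))$ boundary term ``contributes only terms that vanish in $\limsup_{x\to\infty}$.'' Tracking this through gives an error contribution of order
\begin{equation*}
\frac{x\,A(x)\,\overline F(x)}{A(x)^2}\int_1^{\eta x}\frac{A(s)^2}{s^2}\,\dd s
\simeq \frac{x}{A(x)^2}\int_1^{\eta x}\frac{A(s)^2}{s^2}\,\dd s \,.
\end{equation*}
For $\alpha<\tfrac12$ the integral converges to a positive constant while $x/A(x)^2 = x^{1-2\alpha}/L(x)^2\to\infty$, so this error \emph{diverges}, not vanishes. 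The crude upper bound $\overline F(x)$ discards exactly the information you need. The boundary contributions are really of the form $F(x-s+I)$ and $F(x+I)$ (local, not tail, probabilities), and to kill them one must use $F(x+I)=o(A(x)/x)$ — i.e.\ \eqref{eq:nec} — which is not an a~priori hypothesis but a consequence of \eqref{eq:ns} or \eqref{eq:nsbis0} that must first be extracted (e.g.\ by noting that $\P(X\in(x-1,x])\le\P(X\in(x-s,x])$ for $s\ge1$, integrating, and taking $\eta\to0$). The paper sidesteps this issue by a shift argument in the first equivalence (showing the contribution of $s\in[1,1+h)$ is dominated by that of $s\in[1+h,1+2h)$ at $x'=x+h$), and by absorbing the endpoint mismatch into the $\sim$ in the second; your argument needs a replacement for this step, otherwise the claim as written is false.

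Aside from this, your first step also slightly misstates the Potter-bound sandwich: for $s\in[1,\eta x]$ the ratio $(x-s)A(x-s)/(xA(x))$ is bounded above by something close to $1$, not by $(1-\eta)^{1+\alpha-\epsilon}<1$; this is harmless (the paper only needs $\simeq$, i.e.\ two-sided constant bounds independent of $s$ and $x$ for $\eta<\tfrac12$), but worth fixing. The remaining parts — the Fubini swap producing the inner integral $\int_{t-h}^{t}A(s)^2/s\,\dd s\sim h A(t)^2/t$, and the extension to the two-sided case by adding the $\P(X\in x+\dd t)$ contribution — match the paper's argument and are fine modulo the boundary issue above.
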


It is now easy to prove the second part of
Theorem~\ref{th:main}: through a standard integration by parts, one
shows that if $\alpha < \frac{1}{2}$
relation \eqref{eq:ns} is equivalent to \eqref{eq:ns12}, while
if $\alpha = \frac{1}{2}$ it is stronger than \eqref{eq:ns12}. We refer
to the Appendix~\S\ref{sec:maramao} for the details.

\smallskip

Next we turn to the first part of Theorem~\ref{th:main},
i.e.\ the fact that \eqref{eq:ns12'bis0} is a necessary and sufficient
condition for the SRT.
The following general statement is known
\cite[Appendix]{cf:Chi0}: for $F$ satisfying \eqref{eq:tail1},
or more generally \eqref{eq:tail2}, the SRT \eqref{eq:SRT} is equivalent to
\begin{equation} \label{eq:SRTeq}
	\lim_{\delta \to 0}
	\Bigg( \limsup_{x\to\infty} \frac{x}{A(x)}
	\sum_{1 \le n \le A(\delta x)} \P(S_n \in x+I) \Bigg) = 0 \,,
\end{equation}
which means that small values of $n$
give a negligible contribution to the renewal measure
(we refer to Remark~\ref{rem:expl} below for an intuitive explanation of \eqref{eq:SRTeq}).
By Lemma~\ref{lem:tec}, it remains to show that
\emph{condition \eqref{eq:ns12} is necessary and sufficient
for \eqref{eq:SRTeq}}.

\smallskip

The necessity of \eqref{eq:ns12}
(or, if we assume \eqref{eq:tail2}, of \eqref{eq:ns12two})
is quite easy to check and is carried out in the Appendix~\ref{sec:necce}.
Showing the sufficiency of \eqref{eq:ns12} for \eqref{eq:SRTeq}
is much harder and is the core of the paper. 

\begin{itemize}
\item In Section~\ref{sec:>12} we prove that \eqref{eq:SRTeq} follows by
\eqref{eq:tail1} alone, if $\alpha > \frac{1}{2}$. The proof is based on the notion
of ``big jump'' and on two key bounds, cf.\ Lemmas~\ref{th:kale}
and~\ref{th:nobig}, that will be exploited
in an essential way also for the case $\alpha \le \frac{1}{2}$.

\item In Section~\ref{sec:first} we prove that \eqref{eq:ns12} implies \eqref{eq:SRTeq} 
in the special regime $\alpha \in (\frac{1}{3}, \frac{1}{2}]$. This case is technically
simpler, because there is only one big jump to deal with,
but it already contains all the ingredients of the general case.

\item In Section~\ref{sec:gen} we complete the proof,
showing that \eqref{eq:ns12} implies \eqref{eq:SRTeq} 
for any $\alpha \in (0, \frac{1}{2}]$. The strategy is conceptually
analogous to the one of Section~\ref{sec:first} but it is technically
much more involved, because we have to deal with more than one big jump.

\end{itemize}

\begin{remark}\rm\label{rem:usee}
Condition \eqref{eq:ns12},
equivalently \eqref{eq:ns12'bis0}, implies that for any fixed $w > 0$
\begin{equation}\label{eq:nec}
	\P(X \in (x-w,x]) = o\left(\frac{A(x)}{x}\right) \qquad
	\text{as } x \to \infty \,,
\end{equation}
as we prove in the Appendix~\S\ref{sec:equinec}.
This is not surprising, since \eqref{eq:nec} is a necessary
condition for the SRT \eqref{eq:SRT}, because
$U(x+I) \ge \P(S_1 \in x+I) = \P(X \in x+I)$.
In Appendix~\S\ref{sec:equinec} we also prove the following easy consequence
of \eqref{eq:nec}: for all fixed $m\in\N$ and $w > 0$
\begin{equation}\label{eq:nec2}
	\P(S_m \in (x-w,x]) = o\left(\frac{A(x)}{x}\right) \qquad
	\text{as } x \to \infty \,.
\end{equation}
Relations \eqref{eq:nec}-\eqref{eq:nec2} will be useful in
the next sections.\qed
\end{remark}

\begin{remark}\rm\label{rem:expl}
It is worth explaining how \eqref{eq:SRTeq} arises.
For fixed $\delta > 0$, by \eqref{eq:U} we can write
\begin{equation} \label{eq:Uheur}
	U(x+I) \ge \sum_{A(\delta x) < n \le A(\frac{1}{\delta}x)} \P(S_n \in x+I) \,.
\end{equation}
Since $\P(S_n \in x+I) \sim \frac{h}{a_n} \, \phi(\frac{x}{a_n})$
by \eqref{eq:llt}
(where we take $h=v$ for simplicity),
a Riemann sum approximation yields 
(see \cite[Lemma~3.4]{cf:Chi0} for the details)
\begin{equation*}
	\sum_{A(\delta x) < n \le A(\frac{1}{\delta}x)} \P(S_n \in x+I) \sim
	h \, \frac{A(x)}{x} \, \sfC(\delta) \,, \qquad \text{with} \qquad
	\sfC(\delta) = \alpha \int_\delta^{\frac{1}{\delta}}
	z^{\alpha-2} \phi(\tfrac{1}{z}) \, \dd z \,.
\end{equation*}
One can show that $\lim_{\delta \to 0} \sfC(\delta) =
\sfC$,
therefore proving the SRT \eqref{eq:SRT}
amounts to controlling the ranges excluded from \eqref{eq:Uheur},
i.e. $\{n \le A(\delta x)\}$ and $\{n > A(\frac{1}{\delta}x)\}$. The latter always gives
a negligible contribution, by
the bound $\P(S_n \in x+I) \le C/a_n$ (recall \eqref{eq:sup}),
and the former is controlled precisely by \eqref{eq:SRTeq}.\qed
\end{remark}

\section{Proof of Theorems~\ref{th:main}
and~\ref{th:cont}: the case $\alpha  > \frac{1}{2}$}
\label{sec:>12}

\emph{In this section we prove that, if $\alpha > \frac{1}{2}$,
relation \eqref{eq:SRTeq}, which is equivalent to
the SRT \eqref{eq:SRT}, follows with no additional assumptions by \eqref{eq:tail1}, or more generally
by \eqref{eq:tail2}} (we never use the positivity of the increments
of the random walk in this section).

We have to estimate
the probability of the event $\{S_n \in x+I\}$ with $n \le A(\delta x)$,
where $S_n = X_1 + X_2 + \ldots + X_n$.
Let us call ``big jump'' any increment $X_i$ strictly larger than
a suitable threshold $\xi_{n,x}$, defined as a multiplicative
average of $a_n$ and $x$:
\begin{equation} \label{eq:xi}
	\xi_{n,x} := a_n^{\gamma_\alpha} \, x^{1-\gamma_\alpha} =
	a_n \, \left( \frac{x}{a_n} \right)^{1-\gamma_\alpha} \,,
	\qquad \text{with} \quad
	\gamma_\alpha := \frac{\alpha}{4} \,
	\left( 1-\left\{ \frac{1}{\alpha} \right\} \right) > 0 \,,
\end{equation}
where $\{z\} := z - \lfloor z \rfloor \in [0,1)$ denotes the fractional part of $z$.
The reason for the specific choice of $\gamma_\alpha > 0$ will be clear later
(it is important that $\gamma_\alpha$ is small enough).

\subsection{Bounding the number of big jumps}

As a first step, for every $\alpha \in (0,1)$, we show that, on the event
$\{S_n \in x+I\}$ with $n \le A(\delta x)$, the number of ``big jumps''
can be bounded by a deterministic number $\kappa_\alpha \in \N_0$,
defined as follows:
\begin{equation} \label{eq:kappa}
	\kappa_\alpha := 
	\left\lfloor \frac{1}{\alpha} \right\rfloor - 1 \,.
	\qquad \text{i.e.} \qquad \kappa_\alpha = m
	\quad \text{if} \quad \alpha \in (\tfrac{1}{m+2}, \tfrac{1}{m+1}]
	\quad \text{with} \ \ m \in \N_0 \,.
\end{equation}
Note that $\kappa_\alpha = 0$ if $\alpha  > \frac{1}{2}$ and this
is why the SRT holds with no additional assumption in this case.
If $\alpha \le \frac{1}{2}$, on the other hand,
$\kappa_\alpha \ge 1$ and a more refined analysis is required.

Let us call $B_{n,x}^k$ the event
``there are exactly $k$ big jumps'', i.e.\
\begin{equation} \label{eq:B}
\begin{split}
	B_{n,x}^0 & := \left\{\max_{1 \le i \le n} X_i \le \xi_{n,x}
	\right\} \,, \\
	B_{n,x}^k & := \left\{\exists I \subseteq \{1,\ldots, n\}, \ |I| = k: \
	\min_{i\in I} X_i > \xi_{n,x} \,, \ \max_{j \in \{1,\ldots,n\} \setminus I}
	X_i \le \xi_{n,x} \right\}, \ \  k \ge 1 \,,
\end{split}
\end{equation}
and correspondingly let $B_{n,x}^{\ge k}$ be the event
``there are at least $k$ big jumps'':
\begin{equation} \label{eq:B>}
	B_{n,x}^{\ge k} := \bigcup_{\ell = k}^n B_{n,x}^\ell \,,
\end{equation}
The following lemma shows
that the event $B_{n,x}^{\ge \kappa_\alpha+1}$ gives
a negligible contribution to \eqref{eq:SRTeq} (just plug $\ell = 0$
and $m = \kappa_\alpha + 1$ into \eqref{eq:k}). This sharpens 
\cite[Lemma 4.1]{cf:Chi}, where $\kappa$ was defined as
$\lfloor \frac{1}{\alpha} \rfloor$, i.e.\ one unit \emph{larger} than our
choice \eqref{eq:kappa} of $\kappa_\alpha$.
Furthermore, we allow for an extra parameter $\ell$, that will be useful later.

\begin{lemma}\label{th:kale}
Let $F$ satisfy \eqref{eq:tail2} for some $A \in \cR_\alpha$, with $\alpha \in (0, 1)$
and $p > 0$, $q \ge 0$.
There is $\eta = \eta_\alpha > 0$
such that for all $\delta \in (0, 1]$, $x \in [1,\infty)$,
$\ell, m \in \N_0$ 
the following holds:
\begin{equation}\label{eq:k}
	\text{if \ $\ell + m \ge \kappa_\alpha + 1$}: \qquad
	\sum_{1 \le n \le A(\delta x)} n^\ell \,
	\P\left(S_n \in x+I, \, B_{n,x}^{\ge m} \right)
	\lesssim_{\ell,m} \delta^{\eta} \, \frac{A(x)^{\ell + 1}}{x} \,.
\end{equation}
\end{lemma}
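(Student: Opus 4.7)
The plan is to decompose $B_{n,x}^{\ge m} = \bigcup_{k \ge m} B_{n,x}^k$ and bound each stratum separately. On $B_{n,x}^k$ I would condition on the $k$ positions and values of the big jumps, so that the remaining $n-k$ increments are i.i.d.\ copies of $X$ conditioned on $\{X \le \xi_{n,x}\}$. This conditioning costs only a multiplicative constant, since $n \overline{F}(\xi_{n,x}) \to 0$ in our range (checked in the next paragraph), so \eqref{eq:sup} applied to the reduced sum (together with $a_{n-k} \asymp a_n$ for $k \le n/2$) gives the fundamental bound
\[
\P\bigl(S_n \in x+I,\, B_{n,x}^k\bigr) \;\lesssim\; \binom{n}{k}\, \overline{F}(\xi_{n,x})^k\, \frac{1}{a_n}.
\]
The corner case $n/2 < k \le n$ creates no issue since there are then only finitely many pairs $(n,k)$, handled by a trivial bound.

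The next step is to convert this into an explicit power bound. From \eqref{eq:tail2} we have $\overline F(\xi_{n,x}) \asymp 1/A(\xi_{n,x})$, and Potter's inequalities \eqref{eq:Potter} applied to $\xi_{n,x}/x = (a_n/x)^{\gamma_\alpha} \le 1$ give, up to slowly varying factors,
\[
n\,\overline F(\xi_{n,x}) \;\asymp\; \frac{A(a_n)}{A(\xi_{n,x})} \;\asymp\; \bigl(a_n/x\bigr)^{\alpha(1-\gamma_\alpha)},
\]
which is $\ll 1$ when $n \le A(\delta x)$ (since $a_n \le \delta x$). Combining with $n^\ell \asymp a_n^{\alpha \ell}$, the sum on the left-hand side of \eqref{eq:k} reduces to $(k!)^{-1} \sum_n a_n^{\beta_k}$ times an explicit power of $x$, with $\beta_k := \alpha\ell + \alpha k(1-\gamma_\alpha) - 1$. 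Karamata's theorem \eqref{eq:Kar} then yields, modulo slow variation,
\[
\sum_{n \le A(\delta x)} n^\ell\,\P\bigl(S_n \in x+I,\, B_{n,x}^k\bigr) \;\lesssim_k\; \delta^{\,\alpha[\ell + k(1-\gamma_\alpha)+1]-1}\,\frac{A(x)^{\ell+1}}{x}.
\]

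The decisive point is the strict positivity of this $\delta$-exponent, uniformly in admissible $(\ell, k)$. Using $\ell + k \ge \kappa_\alpha + 1 = \lfloor 1/\alpha\rfloor$ together with the specific choice $\gamma_\alpha = \tfrac{\alpha}{4}(1-\{1/\alpha\})$ from \eqref{eq:xi} (with the convention $1-\{1/\alpha\}=1$ when $1/\alpha \in \N$), a direct manipulation reduces the exponent to $\alpha(1-\{1/\alpha\})(1-\alpha k/4)$, which is strictly positive for every $k < K_\alpha := \lceil 4/\alpha\rceil$; one can thus take $\eta_\alpha$ equal to half the minimum of this quantity over $m \le k < K_\alpha$. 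For $k \ge K_\alpha$ the crude bound $\binom{n}{k}\overline F(\xi_{n,x})^k \le (n\overline F(\xi_{n,x}))^k / k!$ is geometrically summable once $n\overline F(\xi_{n,x}) \le \tfrac12$, and the entire tail is dominated by the $k=K_\alpha$ term. The main obstacle is precisely this numerical check when $1/\alpha$ is close to an integer: any simpler choice of $\gamma_\alpha$ would leave only a vanishing slack in the $\delta$-exponent for $\alpha$ slightly larger than $1/(m+1)$, which is exactly what \eqref{eq:xi} (and the sharpened threshold $\kappa_\alpha = \lfloor 1/\alpha\rfloor - 1$, one below the value used in \cite{cf:Chi}) is engineered to prevent; the remaining steps are routine regular-variation bookkeeping.
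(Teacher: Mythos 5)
Your proposal takes a genuinely different route from the paper: you stratify the event $B_{n,x}^{\ge m}$ by the exact number $k$ of big jumps, bound each stratum, and then resum over $k$; the paper instead bounds $B_{n,x}^{\ge m}$ in one shot by choosing the $m$ big-jump indices (paying $n^m$, not $\binom{n}{k}$) and integrating out the remaining $n-m$ increments with no constraint at all. Your computation of the $\delta$-exponent $\alpha(1-\{1/\alpha\})(1-\alpha k/4)$ for $\ell+k=\kappa_\alpha+1$ matches the paper's $J_\alpha+1$ (the factor $2\gamma_\alpha$ there vs.\ your $\gamma_\alpha$ reflects their cruder Potter choice $\epsilon=\alpha$), and you correctly identify why $\gamma_\alpha$ is taken small and why $\kappa_\alpha=\lfloor 1/\alpha\rfloor-1$ is the right threshold. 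The paper's one-shot bound buys simplicity: there is no sum over $k$, no geometric tail, no split at $k=K_\alpha$, and no conditioning issue.

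Two technical points in your writeup would need to be fixed. First, your claim that the regime $n/2 < k \le n$ ``creates no issue since there are then only finitely many pairs $(n,k)$'' is simply false --- for each $n$ there are $\Theta(n)$ such $k$, and $n$ ranges up to $A(\delta x)$. What actually saves you there is that $\binom{n}{k}\overline F(\xi_{n,x})^k \le (n\overline F(\xi_{n,x}))^k/k!$ decays super-exponentially in $n$ once $k \asymp n$, \emph{and} that $n\overline F(\xi_{n,x}) \lesssim \delta^{\mu}$ uniformly for some $\mu>0$ (by Potter applied to $\xi_{n,x}/a_n=(x/a_n)^{1-\gamma_\alpha}\ge \delta^{-(1-\gamma_\alpha)}$), so the whole range still contributes $\lesssim \delta^{\eta}A(x)^{\ell+1}/x$; but this is an argument about the size of the terms, not about their number. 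Second, the conditioning of the remaining $n-k$ increments on $\{X\le\xi_{n,x}\}$ is an unnecessary detour: you can simply discard the constraint $\max_{j>k}X_j\le\xi_{n,x}$ from the probability before applying \eqref{eq:sup}, exactly as in \eqref{eq:plu}, since this only makes the probability larger. Relatedly, the ``modulo slowly varying factors'' step $n\overline F(\xi_{n,x})\asymp(a_n/x)^{\alpha(1-\gamma_\alpha)}$ should be replaced, for a rigorous writeup, by an explicit Potter bound with a fixed $\epsilon$; the paper does this carefully (with $\epsilon=\alpha$ and then $\epsilon=(1-b)\alpha$ for a tunable $b<1$) and the choice of $\gamma_\alpha<\tfrac14$ is dimensioned precisely so that this loss is absorbed.
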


\begin{proof}
Throughout the proof we work for $n \le A(\delta x)$, hence $a_n \le
\delta x \le x$ (since $\delta \le 1$). Consequently, recalling \eqref{eq:xi}, we have
$a_n \le \xi_{n,x} \le x$.

For $m \in \N$, recalling \eqref{eq:sup}, we can write
\begin{equation} \label{eq:plu}
\begin{split}
	\P\big(S_n \in x+I, \ & B_{n,x}^{\ge m} \big)
	= \P\left( S_n \in x+I, \, \exists A\subseteq \{1,\ldots, n\}, 
	|A|=m: \, \min_{i\in A} X_i > \xi_{n,x} \right) \\
	& \le n^m \, \P\left( S_n \in x+I, \
	\min_{1 \le i \le m} X_i > \xi_{n,x} \right) \\
	& = n^m \, \int_{w\in\R} \P\left(S_m \in \dd w, \, \min_{1 \le i \le m} X_i > \xi_{n,x} \right)
	\, \P(S_{n-m} \in x-w+I) \\
	& \le n^m \, \P\left(\min_{1 \le i \le m} X_i > \xi_{n,x} \right) \,
	\left\{ \sup_{z\in\R} \P(S_{n-m} \in z+I) \right\} \\
	& \lesssim n^m \, \P(X > \xi_{n,x})^m \frac{1}{a_{n-m}}
	\lesssim_m \frac{n^m}{A(\xi_{n,x})^m} \, \frac{1}{a_n} \,,
\end{split}
\end{equation}
and this estimate holds also for $m=0$
(in which case $\P(S_n \in x+I, \ B_{n,x}^{\ge 0} )
= \P(S_n \in x+I )$).
Next we apply the lower bound in \eqref{eq:Potter} with $\epsilon = \alpha$
and $\rho = \xi_{n,x}/x$
(note that the condition $\rho x = \xi_{n,x} \ge 1$ is fulfilled because
$\xi_{n,x} \ge a_n \ge 1$):
\begin{equation*}
	\frac{A(\xi_{n,x})}{A(x)} \gtrsim \left( \frac{\xi_{n,x}}{x} \right)^{2\alpha}
	= \left( \frac{a_n}{x} \right)^{2\alpha \gamma_\alpha} \,.
\end{equation*}
Looking back at \eqref{eq:plu}, we get 
\begin{equation}\label{eq:use}
\begin{split}
	\sum_{1 \le n \le A(\delta x)} n^\ell \, \P\left(S_n \in x+I, \ B_{n,x}^{\ge m} \right)
	& \lesssim \frac{x^{2\alpha \gamma_\alpha m}}{A(x)^{m}}
	\sum_{1 \le n \le A(\delta x)} \frac{n^{m+\ell}}{(a_n)^{2\alpha \gamma_\alpha m + 1}} \,.
\end{split}
\end{equation}

Since $a_n \in \cR_{1/\alpha}$, the sequence in the sum is
regularly varying with index
\begin{equation*}
	J'_{m,\ell,\alpha} =(m+\ell) - \frac{1}{\alpha}(2\alpha \gamma_\alpha m+1)
	= (m+\ell) (1-2\gamma_\alpha) - \frac{1}{\alpha} + 2 \gamma_\alpha \ell \,.
\end{equation*}
By assumption $\ell \ge 0$ and $m + \ell \ge \kappa_\alpha + 1$,
hence
\begin{equation}\label{eq:Jprime}
	J'_{m,\ell,\alpha} \ge J_\alpha \qquad \text{with} \qquad
	J_\alpha := (\kappa_\alpha + 1) (1-2\gamma_\alpha) - \frac{1}{\alpha} \,.
\end{equation}
We
claim that
\begin{equation}\label{eq:combin}
	J_\alpha > -1 \,, \qquad \text{that is} \qquad
	\kappa_\alpha + 1 > \frac{1-\alpha}{\alpha} \frac{1}{1-2\gamma_\alpha} \,.
\end{equation}
To verify it, write $\kappa_\alpha + 1 = \lfloor \frac{1}{\alpha}\rfloor =
\frac{1}{\alpha} - \{\frac{1}{\alpha}\} = \frac{1-\alpha}{\alpha} + (1-\{\frac{1}{\alpha}\})$,
so that relation
\eqref{eq:combin} becomes $1-\{\frac{1}{\alpha}\} >
\frac{1-\alpha}{\alpha}\frac{2\gamma_\alpha}{1-2\gamma_\alpha}$. Since $\gamma_\alpha < \frac{1}{4}$ by construction,
cf.\ \eqref{eq:xi}, we have
$\frac{1-\alpha}{\alpha}\frac{2\gamma_\alpha}{1-2\gamma_\alpha}
< \frac{1-\alpha}{\alpha} \, 4\gamma_\alpha
< \frac{4\gamma_\alpha}{\alpha}$ and it remains to note that
$\frac{4\gamma_\alpha}{\alpha} = 1-\{\frac{1}{\alpha}\}$,
by definition \eqref{eq:xi} of $\gamma_\alpha$.

Coming back to \eqref{eq:use}, since the sequence in the sum is regularly varying with index
$J'_{m,\ell,\alpha} \ge J_\alpha > -1$, we can apply relation \eqref{eq:Kar}, getting
\begin{equation}\label{eq:keycomp1}
\begin{split}
	\sum_{1 \le n \le A(\delta x)} n^\ell \, \P\left(S_n \in x+I, \ B_{n,x}^{\ge m} \right)
	& \lesssim_{\ell,m} \frac{x^{2\alpha \gamma_\alpha m}}
	{A(x)^{m}} \,
	\frac{A(\delta x)^{m+\ell+1}}
	{(\delta x)^{2\alpha \gamma_\alpha m + 1}} 
	= \frac{A(\delta x)^{\ell + m + 1}}{
	\delta^{2\alpha
	\gamma_\alpha m + 1} A(x)^m \, x} \,.
\end{split}
\end{equation}
It is convenient to introduce a parameter $b = b_\alpha \in [\frac{1}{2},1)$, 
depending only on $\alpha$, that will be fixed later.
Note that \eqref{eq:k} holds trivially for $\delta x < 1$
(the left hand side vanishes, due to $A(0) < 1$), hence we may assume that
$\delta x \ge 1$.
We can then apply the upper bound in \eqref{eq:Potter} with
$\epsilon = (1-b)\alpha$ and $\rho = \delta$, that is
$A(\delta x) \lesssim \delta^{b\alpha} A(x)$, which plugged into \eqref{eq:keycomp1} gives
\begin{equation}\label{eq:keycomp1b}
\begin{split}
	\sum_{1 \le n \le A(\delta x)} n^\ell \, \P\left(S_n \in x+I, \ B_{n,x}^{\ge m} \right)
	& \lesssim_{\ell,m} 	\delta^{b \alpha(\ell+m+1)-(2\alpha\gamma_\alpha m + 1)}
	\, \frac{A(x)^{\ell + 1}}{x} \\
	& = 	\delta^{\alpha\{(m+\ell) (b-2\gamma_\alpha) 
	- \frac{1}{\alpha} + 2 \gamma_\alpha \ell + 1\}}
	\, \frac{A(x)^{\ell + 1}}{x} \\
	& \le \delta^{\alpha\{(\kappa_\alpha + 1) (b-2\gamma_\alpha) - \frac{1}{\alpha} + 1\}}
	\, \frac{A(x)^{\ell + 1}}{x} \,,
\end{split}
\end{equation}
where the last inequality holds because $m + \ell \ge \kappa_\alpha + 1$
and $\ell \ge 0$ by assumption (recall that $\delta \le 1$
and note that $b-2\gamma_\alpha > 0$, because $b \ge \frac{1}{2}$ and $\gamma_\alpha < \frac{1}{4}$). 
Recalling \eqref{eq:Jprime}, we get
\begin{equation}\label{eq:keycomp1c}
	\sum_{1 \le n \le A(\delta x)} n^\ell \, \P\left(S_n \in x+I, \ B_{n,x}^{\ge m} \right)
	\lesssim_{\ell,m} 
	\delta^{\alpha\{(J_\alpha+1) - (1-b)(\kappa_\alpha + 1)\}}
	\, \frac{A(x)^{\ell + 1}}{x} \,.
\end{equation}
Since $J_\alpha + 1 > 0$, by \eqref{eq:combin}, we can choose $b<1$
so that the term in bracket
is strictly positive.
More explicitly, defining
$b = b_\alpha := \max\{\frac{1}{2}, 1- \frac{1}{2} \frac{J_\alpha+1}{\kappa_\alpha+1}\}$,
the right hand side of \eqref{eq:keycomp1c} becomes
$\lesssim \delta^{\alpha \{ \frac{1}{2}(J_\alpha+1) \}} \, \frac{A(x)^{\ell + 1}}{x}$.
This shows that
relation \eqref{eq:k} holds
with $\eta = \eta_\alpha := \frac{1}{2}\alpha(J_\alpha + 1)$.
\end{proof}

\subsection{The case of no big jumps}

Next we analyze the event 
$B_{n,x}^0$ of ``no big jumps'', showing that it gives
a negligible contribution to \eqref{eq:SRTeq}, irrespective of $\alpha \in (0,1)$.
(The extra parameter $\ell$ and the $\sup$ over $z$
in \eqref{eq:0} will be useful later.)

\begin{lemma}\label{th:nobig}
Let $F$ satisfy \eqref{eq:tail2} for some $A \in \cR_\alpha$, with $\alpha \in (0, 1)$
and $p > 0$, $q \ge 0$.
For all $\delta \in (0, 1]$, $x \in [1,\infty)$,
$\ell \in \N_0$ 
the following holds, with $\gamma = \gamma_\alpha > 0$ defined in \eqref{eq:xi}:
\begin{equation}\label{eq:0}
	\sum_{1 \le n \le A(\delta x)} 
	n^\ell \,
	\sup_{z  \ge \delta^{\gamma/2}x}
	\P\left(S_n \in z+I, \, B_{n,x}^0 \right)
	\lesssim e^{-\frac{1}{\delta^{\gamma/3}}} \, \frac{A(x)^{\ell+1}}{x} \,.
\end{equation}
\end{lemma}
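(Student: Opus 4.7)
The plan is to exploit that on $B_{n,x}^0$ every increment is bounded by $\xi_{n,x} = a_n^{\gamma} x^{1-\gamma}$, while we require the partial sum to land in $z+I$ with $z \ge \delta^{\gamma/2} x$. Since $n \le A(\delta x)$ forces $a_n \le \delta x$, one immediately gets $\xi_{n,x} \le \delta^\gamma x$, so
\[
	\frac{z}{\xi_{n,x}} \; \ge \; \frac{\delta^{\gamma/2} x}{\delta^\gamma x} \; = \; \delta^{-\gamma/2}
	\xrightarrow[\delta \to 0]{} \infty \,,
\]
uniformly in $n \le A(\delta x)$ and $z \ge \delta^{\gamma/2}x$. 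This puts us squarely in a large-deviations regime for a bounded-increment walk, where one expects superexponentially small probabilities.

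The tool I would use is the local large-deviation estimate \eqref{eq:DDS} of Denisov--Dieker--Shneer. Concretely, introduce the truncated walk $\tilde S_n := \sum_{i=1}^n X_i \, \ind_{\{X_i \le \xi_{n,x}\}}$; on $B_{n,x}^0$ one has $S_n = \tilde S_n$, so
\[
	\P\bigl(S_n \in z+I,\, B_{n,x}^0\bigr) \; \le \; \P\bigl(\tilde S_n \in z+I\bigr) \,,
\]
and \eqref{eq:DDS} applied with threshold $\xi_{n,x}$ yields a bound of the form $\frac{1}{a_n}\,\Psi\!\left(\frac{z}{\xi_{n,x}}\right)$, where $\Psi(t)$ decays at least like $\exp(-c\,t \log t)$ as $t \to \infty$. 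By the uniform lower bound $z/\xi_{n,x} \ge \delta^{-\gamma/2}$ established above, this provides a prefactor $\lesssim \frac{1}{a_n}\exp\!\bigl(-c\,\delta^{-\gamma/2}\bigr)$, uniform in $z$ and in $n \le A(\delta x)$.

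It then remains to control the sum $\sum_{1 \le n \le A(\delta x)} n^\ell/a_n$. Using $a_n \in \cR_{1/\alpha}$, Karamata's theorem \eqref{eq:Kar} (or the $\zeta=-1$ case) together with Potter's bounds \eqref{eq:Potter} gives, for some large enough $M = M(\ell,\alpha)$,
\[
	\sum_{1 \le n \le A(\delta x)} \frac{n^\ell}{a_n} \; \lesssim_{\ell} \; \delta^{-M} \,\frac{A(x)^{\ell+1}}{x} \,.
\]
Combining with the superexponential prefactor, and noting that for $\delta \in (0,1]$
\[
	\exp\bigl(-c\,\delta^{-\gamma/2}\bigr)\, \delta^{-M} \; \lesssim \; \exp\bigl(-\delta^{-\gamma/3}\bigr)
\]
(the trade from $\gamma/2$ down to $\gamma/3$ being precisely what absorbs any polynomial-in-$\delta^{-1}$ loss), yields \eqref{eq:0}.

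The main obstacle is verifying that \eqref{eq:DDS} applies \emph{uniformly} over the full range $1 \le n \le A(\delta x)$ and $z \ge \delta^{\gamma/2} x$; in particular, that the threshold $\xi_{n,x}$ is compatible with the hypotheses of the Denisov--Dieker--Shneer estimate (typically requiring $z/\xi_{n,x}$ large and $n\overline F(\xi_{n,x})$ bounded, both easily checked from $a_n \le \xi_{n,x} \le \delta^\gamma x$ and $n \le A(\delta x) \lesssim A(\xi_{n,x})$ via \eqref{eq:Potter}). Once that bookkeeping is done, the rest is a direct Karamata/Potter computation.
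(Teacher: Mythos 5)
Your overall plan (DDS estimate with threshold $\xi_{n,x}$, Karamata/Potter bookkeeping) is in the right spirit and is indeed what the paper does, but one step is genuinely wrong and cannot be repaired as stated. You bound
\[
\P\bigl(S_n \in z+I,\, B_{n,x}^0\bigr) \;\lesssim\; \frac{1}{a_n}\,\exp\!\bigl(-c\,\delta^{-\gamma/2}\bigr)\,,
\]
by taking the \emph{worst} exponent $z/\xi_{n,x}\ge\delta^{-\gamma/2}$ uniformly over $n$, and then you claim
\[
\sum_{1\le n\le A(\delta x)} \frac{n^\ell}{a_n}\;\lesssim_\ell\; \delta^{-M}\,\frac{A(x)^{\ell+1}}{x}\,.
\]
That last display is false when $\ell < \tfrac{1}{\alpha}-1$ (which includes the crucial case $\ell=0$). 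Since $a_n\in\cR_{1/\alpha}$ with $1/\alpha>1$, for such $\ell$ the series $\sum_n n^\ell/a_n$ converges, so the left-hand side is bounded below by a positive constant, while $A(x)^{\ell+1}/x\in\cR_{\alpha(\ell+1)-1}$ tends to $0$ as $x\to\infty$. Take $\delta=1$, $\ell=0$: the left side approaches $\sum_{n\ge1}1/a_n>0$ whereas the right side goes to $0$, so no choice of $M$ can save it. The underlying issue is that for small $n$ the prefactor $1/a_n$ is $O(1)$, not $O(1/x)$, and you have thrown away the compensating decay coming from the exponent.

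What the paper exploits, and what is missing here, is that the worst-case bound $z/\xi_{n,x}\ge\delta^{-\gamma/2}$ is sharp only at the largest scale $a_n\approx\delta x$; for small $n$ (hence small $a_n$) the exponent $z/\xi_{n,x}\gtrsim\delta^{\gamma/2}(x/a_n)^{\gamma}$ is far larger and overwhelms the $1/a_n$ prefactor. Concretely, the DDS bound yields
\[
\P\bigl(S_n\in z+I,\,B_{n,x}^0\bigr)\;\lesssim\;\frac{1}{a_n}\,e^{-(\sqrt{\delta}\,x/a_n)^{\gamma}}
=\frac{1}{\sqrt{\delta}\,x}\,\phi\!\Bigl(\frac{a_n}{\sqrt{\delta}\,x}\Bigr),
\qquad \phi(y):=\frac{1}{y}\,e^{-1/y^{\gamma}}\,,
\]
and the paper observes that $\phi$ is \emph{increasing} on $(0,\gamma^{1/\gamma}]$. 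Since $a_n/(\sqrt{\delta}x)\le\sqrt{\delta}$ for $n\le A(\delta x)$, one gets a bound $\frac{1}{\sqrt{\delta}x}\,\phi(\sqrt{\delta})$ that is \emph{uniform in $n$} and carries the essential $1/x$ factor; only then is $\sum n^\ell$ summed, giving $A(\delta x)^{\ell+1}$ and the stated estimate. So the missing idea is exactly this monotonicity trade-off between $1/a_n$ and the exponential, which is what produces a bound with the correct decay in $x$. (A smaller inaccuracy: \eqref{eq:DDS} yields only the exponential $e^{-z/s_n}$, not the stronger $\exp(-ct\log t)$ you assert; that does not affect the argument either way, but it is worth keeping the hypothesis honest.)
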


\begin{proof}
Throughout the proof we may assume that $\delta x \ge 1$,
because for $\delta x < 1$ the left hand side of \eqref{eq:0}
vanishes (recall that $A(0) < 1$ by construction).

We need a refinement of \eqref{eq:llt}, given by
\cite[Proposition 7.1]{cf:DDS} (see also \cite[Lemma 3.2]{cf:Chi0}):
if $F$ satisfies \eqref{eq:tail1}, or more generally \eqref{eq:tail2},
there are $C_1, C_2 < \infty$ such that
for any sequence $s_n \to \infty$ and $z \ge 0$
\begin{equation}\label{eq:DDS}
	\P\left(S_n \in z+I, \, \max_{1 \le i \le n} X_i \le s_n \right)
	\le C_1 \, e^{C_2 \, \frac{n}{A(s_n)}} \left( \frac{1}{s_n} + \frac{1}{a_n} \right)
	e^{-\frac{z}{s_n}} \,.
\end{equation}
We choose $s_n = \xi_{n,x}$, cf.\ \eqref{eq:xi}.
For $n \le A(\delta x)$, with $\delta \le 1$,
we have $a_n \le x$, hence by \eqref{eq:xi} we get
$\xi_{n,x} \ge a_n$ and consequently $A(\xi_{n,x}) \ge n$. Applying \eqref{eq:DDS}, we obtain
\begin{equation} \label{eq:gia}
	\P\left(S_n \in z+I, \, B_{n,x}^0 \right)
	\le C_1 \, e^{C_2 \, \frac{n}{A(\xi_{n,x})}} \left( \frac{1}{\xi_{n,x}} + \frac{1}{a_n} \right)
	e^{-\frac{z}{\xi_{n,x}}} \lesssim
	\frac{1}{a_n} \, e^{-\frac{z}{x} \,(\frac{x}{a_n})^{\gamma}} 
	\,.
\end{equation}

The function $\phi(y) := \frac{1}{y} e^{-1/y^{\gamma}}$ is increasing
for $y \in (0, c]$, with $c \in (0,1)$ a fixed
constant (by direct computation $c = \gamma^{1/\gamma}$).
Then, if $\delta \le c^2$,
for $z \ge \delta^{\gamma/2} x$ and $a_n \le \delta x$ one has
\begin{equation*}
	\P\left(S_n \in z+I, \, B_{n,x}^0 \right)
	\le \frac{1}{a_n} \, e^{-(\frac{\sqrt{\delta} x}{a_n})^{\gamma}} 
	= \frac{1}{\sqrt{\delta}x} \phi \left(\frac{a_n}{\sqrt{\delta}x}\right)
	\le \frac{1}{\sqrt{\delta}x} \phi(\sqrt{\delta}) \,,
\end{equation*}
hence, always for $\delta \le c^2$, applying 
\eqref{eq:Potter} with $\epsilon = \alpha/2$ and $\rho = \delta$,
\begin{equation} \label{eq:ye}
\begin{split}
	\sum_{1 \le n \le A(\delta x)} \! n^\ell \!
	\sup_{z  \ge \delta^{\gamma/2}x} \P\left(S_n \in z+I, \, B_{n,x}^0 \right)
	& \lesssim
	\frac{\phi(\sqrt{\delta})}{\sqrt{\delta}x} \, A(\delta x)^{\ell+1}
	\lesssim \delta^{\ell \frac{\alpha}{2}} 
	\, \frac{e^{-\frac{1}{\delta^{\gamma/2}}}}{\delta^{1-\frac{\alpha}{2}}}
	\, \frac{A(x)^{\ell+1}}{x} \,.
\end{split}
\end{equation}
Bounding $\delta^{\ell\alpha} \le 1$ since $\delta \le 1$,
relation \eqref{eq:0} is proved for $\delta \le c^2$.

In case $\delta \in (c^2, 1]$, the right hand side of \eqref{eq:0} is
$\simeq A(x)^{\ell+1}/x$, hence we have to show that the left hand side
is $\lesssim A(x)^{\ell+1}/x$.
The contribution of the terms with $n \le A(c^2 x)$ is under control,
by \eqref{eq:ye} with $\delta = c^2$. For the remaining terms, by \eqref{eq:sup},
\begin{equation*}
	\sum_{A(c^2 x) < n \le A(\delta x)} n^\ell \,
	\sup_{z  \ge \delta^{\gamma/2}x} \P\left(S_n \in z+I, \, B_{n,x}^0 \right)
	\lesssim \sum_{A(c^2 x) < n \le A(\delta x)} \frac{n^\ell}{a_n} \le
	\frac{A(x)^{\ell+1}}{c^2 x} \,,
\end{equation*}
where we have bounded
$a_n \ge a_{A(c^2 x)} = c^2 x$
and $n \le A(\delta x) \le A(x)$ (recall that $\delta \le 1$). 
\end{proof}

\subsection{Proof of Theorems~\ref{th:main}
and~\ref{th:cont} for $\alpha > \frac{1}{2}$}
Assume \eqref{eq:tail1}, or more generally \eqref{eq:tail2}, for some $\alpha > \frac{1}{2}$.
We have already observed that $\kappa_\alpha = 0$ for $\alpha > \frac{1}{2}$, cf.\ \eqref{eq:kappa}.
We can then apply Lemma~\ref{th:kale} with $\ell = 0$ and $m=1$, 
since $\ell + m \ge \kappa_\alpha + 1$ in this case.
Together with Lemma~\ref{th:nobig} with $\ell = 0$ and $z=x$, this yields
\begin{equation} \label{eq:dec}
\begin{split}
	\sum_{1 \le n \le A(\delta x)} \P(S_n \in x+I) & =
	\sum_{1 \le n \le A(\delta x)} \P(S_n \in x+I, \, B_{n,x}^{\ge 1}) +
	\sum_{1 \le n \le A(\delta x)} \P(S_n \in x+I, \, B_{n,x}^{0}) \\
	& \lesssim \left( \delta^\eta + e^{-\frac{1}{\delta^{\gamma/3}}} \right)
	\frac{A(x)}{x} \,,
\end{split}
\end{equation}
which shows that relation \eqref{eq:SRTeq}, and hence
the SRT \eqref{eq:SRT}, holds true for $\alpha > \frac{1}{2}$.\qed

\section{Proof of Theorem~\ref{th:main}: 
sufficiency for $\alpha  \in (\frac{1}{3}, \frac{1}{2}]$}

\label{sec:first}

For $\alpha \le \frac{1}{2}$ big jumps have to be taken into account,
because $\kappa_\alpha \ge 1$, cf.\ \eqref{eq:kappa}, and we need to show
that their contributions can be controlled using \eqref{eq:ns12},
which is equivalent to \eqref{eq:ns12'bis0} by Lemma~\ref{lem:tec}.
In order to illustrate the main ideas, in this section
we focus on the special case $\alpha \in (\frac{1}{3}, \frac{1}{2}]$,
which is technically simpler, because $\kappa_\alpha = 1$.
The general case $\alpha \in (0,\frac{1}{2}]$ is treated in Section~\ref{sec:gen}.

\emph{Throughout this section we assume condition \eqref{eq:ns12}
and we show that, for $\alpha \in (\frac{1}{3},
\frac{1}{2}]$, it implies
\eqref{eq:SRTeq}, which is equivalent to the SRT \eqref{eq:SRT}.}

\smallskip

We start with a basic estimate.

\begin{lemma}\label{th:basic}
If $F$ satisfies \eqref{eq:tail1} with $\alpha \in (0,1)$, there are $C,c \in (0,\infty)$
such that for all $n\in\N_0$ and $z \in [0,\infty)$
\begin{equation}\label{eq:basic}
	\P(S_n \in z+I) \le \frac{C}{a_n} \, e^{-c \frac{n}{A(z)}} \,.
\end{equation}
\end{lemma}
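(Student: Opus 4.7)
The plan is to decompose $S_n = S_k + (S_n - S_k)$ with $k = \lfloor n/2 \rfloor$, extracting the $1/a_n$ prefactor from a uniform local bound on the second summand and the exponential factor from a Chernoff-type bound on the first. Writing $S'$ for an independent copy of the walk and conditioning on $S_k$,
\begin{equation*}
	\P(S_n \in z+I) = \int_{[0,\infty)} \P(S_k \in \dd s)\, \P\bigl(S_{n-k}' \in (z-s-h,\,z-s]\bigr)\,.
\end{equation*}
Since $S_{n-k}' \ge 0$, the inner probability vanishes for $s > z$, while for $s \le z$ it is $\le C_0/a_{n-k}$ by \eqref{eq:sup}. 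This yields $\P(S_n \in z+I) \le (C_0/a_{n-k})\, \P(S_k \le z)$.

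For the lower tail of $S_k$, I would use the elementary observation that, since the $X_i$ are nonnegative, $\{S_k \le z\} \subset \bigcap_{i=1}^k \{X_i \le z\}$, hence
\begin{equation*}
	\P(S_k \le z) \le (1-\overline F(z))^k \le \exp(-k\, \overline F(z))\,.
\end{equation*}
The target bound $\P(S_k \le z) \le \exp(-c_1\, k/A(z))$ then follows from the uniform inequality $\overline F(y)\, A(y) \ge c_1 > 0$ for all $y\ge 0$, which in turn reduces to a short compactness argument: \eqref{eq:tail1} gives $\overline F(y)\, A(y) \to 1$ as $y\to\infty$, while $\overline F$ is nonincreasing and strictly positive (since $F$ has unbounded support, again by \eqref{eq:tail1}) and $A(0) = 1/2$, so the infimum of the ratio on $[0,\infty)$ is strictly positive.

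Combining the two estimates, and using the regular variation of $a_n \in \cR_{1/\alpha}$ to obtain $a_{n-k} = a_{\lceil n/2 \rceil} \ge c_3\, a_n$ for $n \ge 2$, gives $\P(S_n \in z+I) \le (C_0/c_3)\, a_n^{-1} \exp(-c_1 \lfloor n/2\rfloor/A(z))$, which is \eqref{eq:basic} after relabeling constants. The cases $n=0,1$ are trivial: $A(z) \ge A(0) = 1/2$ keeps the exponential bounded below, and a single enlargement of $C$ absorbs them. I do not foresee any real difficulty: the content is essentially the observation that half of the walk delivers the Chernoff-type lower-tail decay via the positivity of the increments, while the other half delivers the local density prefactor through \eqref{eq:sup}.
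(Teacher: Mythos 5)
Your proof is correct and follows essentially the same route as the paper: split the walk in half, use the uniform local bound \eqref{eq:sup} on one half to get the $1/a_n$ prefactor (via regular variation of $a_n$ to pass from $a_{\lceil n/2\rceil}$ to $a_n$), and use the inclusion $\{S_k \le z\} \subseteq \bigcap_i \{X_i \le z\}$ (valid because increments are nonnegative) together with the lower bound $\overline F(z) \gtrsim 1/A(z)$ uniformly in $z \ge 0$ to get the exponential factor. The paper phrases the last step slightly more tersely ("choose $c$ such that $\P(X>z) \ge 2c/A(z)$ for all $z\ge 0$, possible by \eqref{eq:tail1} and continuity/positivity of $A$"), but your compactness argument, using that $\overline F$ is nonincreasing and strictly positive together with $A(0)>0$ and the asymptotic $\overline F\, A \to 1$, is exactly the justification being invoked.
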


\begin{proof}
Assuming that $n$ is even (the odd case is analogous)
and applying \eqref{eq:sup}, we get
\begin{equation*}
\begin{split}
	\P\left(S_{n} \in z+I \right) & = \int_{y \in [0,z]} \P(S_{\frac{n}{2}} \in \dd y ) \,
	\, \P(S_{\frac{n}{2}} \in z-y+I) \le \frac{1}{a_{\frac{n}{2}}} \,
	\P(S_{\frac{n}{2}} \le z) \\
	& \lesssim \frac{1}{a_n} \,
	\P\left(\max_{1 \le i \le \frac{n}{2}} X_i \le z \right)
	= \frac{\left(1 - \P(X>z) \right)^{\frac{n}{2}}}{a_n}
	\le \frac{e^{-\frac{n}{2} \P(X>z)}}{a_n}
	\le \frac{e^{-c \frac{n}{A(z)}}}{a_n} \,,
\end{split}
\end{equation*}
provided $c > 0$ is chosen such that $\P(X > z) \ge 2c/A(z)$ for all $z \ge 0$.
This is possible by \eqref{eq:tail1} and because $z \mapsto A(z)$ is
(increasing and) continuous,
with $A(0) > 0$ (see \S\ref{eq:regvar}).
\end{proof}

We are ready to prove that \eqref{eq:SRTeq} follows by \eqref{eq:ns12}
for $\alpha \in (\frac{1}{3}, \frac{1}{2}]$.
In analogy with \eqref{eq:dec}, we apply Lemma~\ref{th:kale}
with $\ell=0$ and, this time, with $m=2$, so that $\ell+m \ge \kappa_\alpha + 1$
(because $\kappa_\alpha = 1$). Applying also Lemma~\ref{th:nobig}
with $\ell = 0$ and $z=x$, we obtain
\begin{equation} \label{eq:dec2}
	\sum_{1 \le n \le A(\delta x)} \P(S_n \in x+I) 
	\lesssim \left( \delta^\eta + e^{-\frac{1}{\delta^{\gamma/3}}} \right)
	\frac{A(x)}{x} \,+\,
	\sum_{1 \le n \le A(\delta x)} \P(S_n \in x+I, \, B_{n,x}^1)  \,.
\end{equation}
The first term gives no problem
for \eqref{eq:SRTeq}, hence we focus on $\P(S_n \in x+I, \, B_{n,x}^1)$.
Plainly,
\begin{equation} \label{eq:cru}
\begin{split}
	\P(S_n \in x+I, \ B_{n,x}^1) & \le
	n \, \P\left(S_n \in x+I, \ X_n > \xi_{n,x}, \ 
	\max_{1 \le j \le n-1} X_i \le \xi_{n,x} \right) \\
	& = n \int_{y \in (\xi_{n,x},x]} \P(X \in \dd y) \,
	\P\left( S_{n-1} \in x-y+I, \ B_{n-1,x}^0\right) \,,
\end{split}
\end{equation}
where we recall that $B_{n-1,x}^0 =
\{\max_{1 \le j \le n-1} X_i \le \xi_{n,x}\}$.

We first consider the contribution to the integral 
given by $y \in (\xi_{n,x}, x(1-\delta^{\gamma/2})]$
(where $\gamma = \gamma_\alpha > 0$ was defined
in \eqref{eq:xi}): since $x-y \ge \delta^{\gamma/2} x$, this contribution is bounded by
\begin{equation} \label{eq:ehm}
	n \, \P(X > \xi_{n,x}) \,
	\sup_{z \ge \delta^{\gamma/2}x} \P\left( S_{n-1} \in z+I, \ B_{n-1,x}^0\right)
	\lesssim \sup_{z \ge \delta^{\gamma/2}x} \P\left( S_{n-1} \in z+I, \ B_{n-1,x}^0\right) \,,
\end{equation}
because $\P(X > \xi_{n,x}) \le \P(X > a_n) \sim 1/A(a_n) = 1/n$, since
$\xi_{n,x} \ge a_n (x/a_n)^{1-\gamma} \ge a_n$
for $n \le A(\delta x)$ with $\delta \le 1$.
Applying Lemma~\ref{th:nobig} with $\ell=0$, by \eqref{eq:cru} we get
\begin{equation} \label{eq:2lint}
\begin{split}
	& \sum_{1 \le n \le A(\delta x)} 
	\P(S_n \in  x+I, \ B_{n,x}^1) \lesssim
	e^{-\frac{1}{\delta^{\gamma/3}}} \, \frac{A(x)}{x} \,+\,
	\cI_{\delta,x} \\
	& \text{where} \qquad \cI_{\delta,x} :=
	\int_{y \in (x(1-\delta^{\gamma/2}), x]}
	\P(X \in \dd y) \, \left( \sum_{n \in \N} n \, 
	\P\left( S_{n-1} \in x-y+I \right) \right) \,.
\end{split}
\end{equation}

Next we look at the contribution to $\cI_{\delta,x}$
given by $y \in (x-1,x]$.
Applying Lemma~\ref{th:basic},
recalling that $z \mapsto A(z)$ is increasing, for $x-y \le 1$ we have
\begin{equation} \label{eq:analo0}
	\sum_{n \in \N} n \, \P\left( S_{n-1} \in x-y+I \right)
	\le \sum_{n \in \N} \frac{n}{a_{n-1}} \, e^{-c \frac{n-1}{A(1)}} =: C
	< \infty \,,
\end{equation}
hence the contribution to $\cI_{\delta,x}$ in \eqref{eq:2lint}
of $y \in (x-1,x]$ is bounded by
\begin{equation*}
	C \, \int_{y \in (x-1, x]}
	\P(X \in \dd y) = C \, \P(X \in (x-1,x])
	= o\left(\frac{A(x)}{x}\right) \,,
\end{equation*}
where the last equality is a consequence of \eqref{eq:ns12}, see \eqref{eq:nec}.
We can thus rewrite \eqref{eq:2lint} as
\begin{equation}\label{eq:2lint+}
\begin{split}
	\cI_{\delta,x} =
	o\left(\frac{A(x)}{x}\right) \,+\,
	\int_{y \in (x(1-\delta^{\gamma/2}), x-1]}
	\P(X \in \dd y) \, \left( \sum_{n \in \N} n \, 
	\P\left( S_{n-1} \in x-y+I \right) \right) \,.
\end{split}
\end{equation}

Finally, we show in a moment that the following estimate holds:
\begin{equation}\label{eq:clai}
	\sum_{n \in \N} n \, 
	\P\left( S_{n-1} \in w+I \right) 
	\lesssim \frac{A(w)^2}{w} \,, \qquad \forall w \ge 1 \,.
\end{equation}
Plugging this into \eqref{eq:2lint+},
since $x-y \ge 1$, we get
\begin{equation}\label{eq:2lint++}
\begin{split}
	\cI_{\delta,x} & =
	o\left(\frac{A(x)}{x}\right) \,+\,
	\int_{y \in (x(1-\delta^{\gamma/2}), x-1]}
	\P(X \in \dd y) \, \frac{A(x-y)^2}{(x-y)}  \\
	& = o\left(\frac{A(x)}{x}\right) \,+\,
	\int_{s \in [1, \delta^{\gamma/2}x)}
	\frac{A(s)^2}{s} \, \P(X \in x - \dd s) \,,
\end{split}
\end{equation}
by the change of variable $s = x-y$.
Gathering \eqref{eq:dec2}, \eqref{eq:2lint} and \eqref{eq:2lint++},
we have shown that relation \eqref{eq:SRTeq}, and hence
the SRT \eqref{eq:SRT}, holds true for $\alpha \in (\frac{1}{2}, \frac{1}{3}]$.

It only remains to prove \eqref{eq:clai}.
The term $n=1$ contributes only if $0 \in w+I = (w-h,w]$
(recall that $S_0 = 0$),
i.e.\ if $w \le h$. Since $\inf_{w \in [0,h]} A(w)^2/w > 0$, this
gives no problem for \eqref{eq:clai}. For $n \ge 2$ we bound
$n \le 2(n-1)$, and renaming $n-1$ as $m$ we rewrite \eqref{eq:clai} as
\begin{equation} \label{eq:claib}
	\sum_{m \in \N} m \, \P\left( S_{m} \in w+I \right)
	\lesssim \frac{A(w)^2}{w} \,, \qquad \forall w \ge 1 \,.
\end{equation}
\begin{itemize}
\item Let us first look at the contribution of the terms
$m > A(w)$. By Lemma~\ref{th:basic},
\begin{equation*}
	\sum_{m > A(w)} m \, \P\left( S_{m} \in w+I \right)
	\le \sum_{m > A(w)} \frac{m}{a_m} \, e^{-c \frac{m}{A(w)}}
	\le \frac{A(w)^2}{w} \left\{ \sum_{m > A(w)}
	\frac{1}{A(w)} \, \frac{m}{A(w)} \, e^{-c \frac{m}{A(w)}} \right\} \,,
\end{equation*}
because $a_m > w$ for $m > A(w)$.
The bracket is a Riemann sum which converges
to $\int_1^\infty t \, e^{-ct} \, \dd t < \infty$ as $w \to \infty$.
It is also a continuous function of $w$ (by dominated convergence), hence
it is uniformly bounded for $w \in [1,\infty)$. 

\item For
the terms with $m \le A(w)$, we distinguish the events $B_{m,w}^{\ge 1}$
and $B_{m,w}^{0}$, i.e.\ whether there are ``big jumps'' or not
(recall \eqref{eq:B}). Applying
Lemma~\ref{th:kale} with $\delta = 1$, $x=w$ and
with $\ell = m = 1$ (note that $\kappa_\alpha = 1$
and hence $\ell + m \ge \kappa_\alpha +1$), we get
\begin{equation*}
\begin{split}
	\sum_{m \le A(w)} m \, \P\left( S_{m} \in w+I, \, B_{m,w}^{\ge 1} \right) \lesssim
	\frac{A(w)^2}{z} \,.
\end{split}
\end{equation*}
Likewise, by Lemma~\ref{th:nobig} with $\delta = 1$, $x = w$
and $\ell = 1$, we obtain
\begin{equation*}
	\sum_{m \le A(w)} m \, \P\left( S_{m} \in w+I, \, B_{m,w}^{0} \right) 
	\lesssim e^{-1} \frac{A(w)^2}{z}  \,.
\end{equation*}
\end{itemize}
Altogether, we have completed the proof of \eqref{eq:claib},
hence of \eqref{eq:clai}.
\qed

\section{Proof of Theorem~\ref{th:main}: 
sufficiency for $\alpha  \in (0, \frac{1}{2}]$}

\label{sec:gen}

\emph{In this section we assume condition \eqref{eq:ns12}, 
which by Lemma~\ref{lem:tec}
is equivalent to \eqref{eq:ns12'bis0}, and we show that for any
$\alpha \in (0,\frac{1}{2}]$ it implies
\eqref{eq:SRTeq}, which is equivalent to the SRT \eqref{eq:SRT}.}

We stress that the strategy is analogous to the one adopted in Section~\ref{sec:first}
for $\alpha \in (\frac{1}{3}, \frac{1}{2}]$, but
having to deal with more than one big jumps makes things more involved.
In order to keep the exposition as streamlined as possible, we will use a ``backward''
induction, proving the following result, which is stronger than \eqref{eq:SRTeq}.

\begin{theorem}\label{th:main2}
Let $F$ be a probability on $[0,\infty)$
satisfying \eqref{eq:tail1} with $\alpha \in (0, 1)$. 
Assume that condition \eqref{eq:ns12} is satisfied. Then, for every $\ell \in \N_0$,
\begin{equation}\label{eq:ggoal}
	\lim_{\delta \to 0}
	\left( \limsup_{x\to\infty} \frac{x}{A(x)^{\ell+1}}
	\sum_{1 \le n \le A(\delta x)} n^\ell \, \P(S_n \in x+I) \right) = 0 \,.
\end{equation}
In particular, setting $\ell = 0$, relation \eqref{eq:SRTeq} holds.
\end{theorem}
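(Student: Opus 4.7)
The plan is to prove Theorem~\ref{th:main2} by backward induction on $\ell$, descending from $\ell \ge \kappa_\alpha + 1$ down to $\ell = 0$. The base case $\ell \ge \kappa_\alpha + 1$ is immediate from Lemma~\ref{th:kale} applied with $m = 0$ (so that $\ell + m \ge \kappa_\alpha + 1$), yielding $\sum_{n \le A(\delta x)} n^\ell \, \P(S_n \in x+I) \lesssim_\ell \delta^\eta A(x)^{\ell+1}/x$ and hence \eqref{eq:ggoal}. At each level I would also carry along the companion uniform upper bound
\[
\sum_{n \ge 1} n^{\ell} \, \P(S_n \in w+I) \;\lesssim\; \frac{A(w)^{\ell+1}}{w} \qquad \text{for all } w \ge 1,
\]
denoted $(\ast_\ell)$; it is derived from \eqref{eq:ggoal} at the same level (applied with $\delta = 1$ for the range $n \le A(w)$ when $w$ is large, together with Lemma~\ref{th:basic} for the range $n > A(w)$, plus a trivial estimate for bounded $w$).

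For the inductive step at level $\ell \le \kappa_\alpha$, assuming \eqref{eq:ggoal} and $(\ast_{\ell'})$ for all $\ell' \ge \ell + 1$, I decompose according to the number of big jumps:
\[
\sum_{n \le A(\delta x)} n^\ell\, \P(S_n \in x + I)
= \sum_{k=0}^{\kappa_\alpha - \ell} \sum_{n \le A(\delta x)} n^\ell \, \P(S_n \in x+I, B_{n,x}^k)
+ \sum_{n \le A(\delta x)} n^\ell \, \P(S_n \in x+I, B_{n,x}^{\ge \kappa_\alpha - \ell + 1}).
\]
The last term is $\lesssim \delta^\eta A(x)^{\ell+1}/x$ by Lemma~\ref{th:kale} with $m = \kappa_\alpha - \ell + 1$; the $k=0$ term is $\lesssim e^{-1/\delta^{\gamma/3}} A(x)^{\ell+1}/x$ by Lemma~\ref{th:nobig} with $z = x$. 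For each intermediate $1 \le k \le \kappa_\alpha - \ell$, I extract one big jump by exchangeability and substitute $s = x - y$:
\[
\sum_{n \le A(\delta x)} n^\ell \, \P(S_n \in x+I, B_{n,x}^k)
\;\lesssim_k\; \sum_{n \le A(\delta x)} n^{\ell+1} \int_{s \in [0,\, x - \xi_{n,x})} \P(X \in x - \dd s)\, \P(S_{n-1} \in s+I, B_{n-1,x}^{k-1}),
\]
and I split the $s$-integral into three ranges, in analogy with Section~\ref{sec:first}: (A) $s \ge \delta^{\gamma/2} x$, (B) $s \in [0,1)$, and (C) $s \in [1,\delta^{\gamma/2}x)$. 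Range (B) is dispatched by \eqref{eq:nec} together with an $O(1)$ bound on $\sum_n n^{\ell+1} \P(S_{n-1} \in [0,1] + I)$ coming from Lemma~\ref{th:basic}. In range (C), which is the main term, I drop the event $B_{n-1,x}^{k-1}$ and apply $(\ast_{\ell+1})$ to bound the inner sum by $A(s)^{\ell+2}/s$; the resulting integral
\[
\int_1^{\delta^{\gamma/2} x} \frac{A(s)^{\ell+2}}{s}\, \P(X \in x - \dd s)
\]
is $o(A(x)^{\ell+1}/x)$ thanks to the hypothesis \eqref{eq:ns12}, after pulling out $A(s)^\ell \lesssim \delta^{c\ell} A(x)^\ell$ via Potter's bound \eqref{eq:Potter}.

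The main obstacle is range (A) for $k \ge 2$: naively dropping $B_{n-1,x}^{k-1}$ and applying $(\ast_{\ell+1})$ would cost an unaffordable factor $A(x)$. I would resolve this by iterating the big-jump extraction a further $k-1$ times, reducing inside the probability all the way to $B_{n-k,x}^0$, for which the sup bound in Lemma~\ref{th:nobig} (valid uniformly in $z \ge \delta^{\gamma/2} x$) becomes applicable. The combinatorial prefactors $\prod_{j=1}^{k-1}(n-j) \lesssim n^{k-1}$ then combine with the tail estimates $\P(X > \xi_{n,x}) \lesssim 1/n$ to yield summable expressions, precisely because $\gamma_\alpha$ was calibrated in~\eqref{eq:xi} to make the exponents in the Lemma~\ref{th:kale} argument favorable. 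Executing this multi-extraction cleanly, while carefully tracking the interplay between the $k$ tail factors, the integrations over the big-jump values, and the Potter bounds (with a further split of the residue $s - \sum y_j$ into a ``near $0$'' subrange handled by $(\ast_{\ell+k})$ and \eqref{eq:nec}, and a ``far from $0$'' subrange handled by Lemma~\ref{th:nobig}), is the technical heart of the proof and the reason that Section~\ref{sec:gen} of the paper is considerably more involved than Section~\ref{sec:first}.
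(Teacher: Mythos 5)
Your overall scaffolding is the same as the paper's: backward induction on $\ell$ down from $\ell\ge\kappa_\alpha+1$, the companion uniform bound $(\ast_\ell)$ (this is precisely \eqref{eq:rewricon} in the paper), the split by number of big jumps, Lemma~\ref{th:kale} for the ``too many'' regime and Lemma~\ref{th:nobig} for $B^0_{n,x}$. For $k=1$ big jump your plan with the three ranges (A), (B), (C) reproduces Section~\ref{sec:first}, and your range~(C) computation (drop $B^{k-1}_{n-1,x}$, apply $(\ast_{\ell+1})$ to get $A(s)^{\ell+2}/s$, pull out $A(s)^\ell\lesssim\delta^{c\ell}A(x)^\ell$ by Potter, finish with \eqref{eq:ns12}) is sound for all $k$. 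Up to this point you match the paper.

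The gap is exactly where you flag it: range (A) for $k\ge 2$. There your proposal is a description of the difficulty, not a proof. ``Iterate the big-jump extraction and split the residue into near-$0$/far-from-$0$'' is the natural first idea, but run naively it does \emph{not} close, for the reason you half-notice: each time a residual $s_j$ falls into the near-$0$ subrange you must apply \eqref{eq:ns12} at the point $s_{j-1}$ rather than at $x$, producing a factor $g(\cdot)\,A(s_{j-1})/s_{j-1}$, and since $s_{j-1}$ may be as small as $\delta^{\gamma/2}x$ this is larger than $A(x)/x$ by an uncontrolled power of $\delta^{-1}$; the $1/n$ tail factors from the already-extracted jumps have already been spent against the combinatorial $n^{k}$ prefactor and cannot absorb this. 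The paper avoids this altogether by a different organizing principle: extract all $m$ jumps \emph{simultaneously}, integrate against the joint law of $\bigl(S_m,\Lambda_m\bigr)$ (sum and \emph{minimum} of the big jumps), and decompose $(0,x]^2$ into $J_1,\dots,J_4$ according to the sizes of the final residual $x-y$ and of $\Lambda_m=w$. The small-residual regime is then split into $J_3$ ($w>\delta^\gamma x$: all jumps comparably large, one application of \eqref{eq:ufff} and \eqref{eq:ns12new} suffices) and $J_4$ ($w\le\delta^\gamma x$), and $J_4$ is handled by the quantity $\Theta^{C,D}_{\bar\ell,m}$ of \eqref{eq:Theta}, whose recursion \eqref{eq:cruind}--\eqref{eq:conclude} in $m$ is what makes the multi-jump bookkeeping close. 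The peeling-off happens inside that recursion, with the crucial change of variables $s=(w+x)-y$ (not $s=x-y_1$) and the matched use of \eqref{eq:ns12newcon}. Your sketch does not contain the idea of tracking the minimum jump $\Lambda_m$, nor a mechanism to produce a closed recursion, and without it the iterated-extraction plan stalls at precisely the step you identify as ``the technical heart.''
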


\begin{proof}
Writing $\P(S_n \in x+I) = \P(S_n \in x+I, \, B_{n,x}^{\ge 0} )$,
Lemma~\ref{th:kale} with $m=0$ shows that relation \eqref{eq:ggoal}
holds for all $\ell \ge \kappa_\alpha + 1$. 

We can now proceed by ``backward induction'': we fix
$\bar \ell \in \{0, 1, \ldots, \kappa_\alpha\}$ and assume that \eqref{eq:ggoal} holds for all
$\ell \ge \bar \ell + 1$. If we show that \eqref{eq:ggoal} holds for $\ell = \bar \ell$,
Theorem~\ref{th:main2} is proved.

Let us define $\bar m := \kappa_\alpha - \bar \ell$. Again by Lemma~\ref{th:kale},
for $\delta \le 1$ and $x \ge 1$
\begin{equation*}
	\sum_{1 \le n \le A(\delta x)} n^{\bar \ell}\, \P(S_n \in x+I, \, B_{n,x}^{\ge \bar m + 1})
	\lesssim \delta^\eta \, \frac{A(x)^{\bar \ell + 1}}{x} \,.
\end{equation*}
Likewise, by Lemma~\ref{th:nobig},
\begin{equation*}
	\sum_{1 \le n \le A(\delta x)} n^{\bar \ell}\, \P(S_n \in x+I, \, B_{n,x}^{0})
	\lesssim e^{-\frac{1}{\delta^{\gamma/3}}} \, \frac{A(x)^{\bar \ell + 1}}{x} \,.
\end{equation*}
Therefore, the proof is completed if we show that
for every fixed $m \in \{1,2,\ldots, \bar m\}$
\begin{equation} \label{eq:ifwe}
	\lim_{\delta \to 0}
	\left( \limsup_{x\to\infty} \frac{x}{A(x)^{\bar\ell + 1}}
	\sum_{1 \le n \le A(\delta x)} n^{\bar \ell}\, \P(S_n \in x+I, \, B_{n,x}^{m})
	\right) = 0 \,.
\end{equation}

\medskip

\emph{Proof of \eqref{eq:ifwe}.}
Note that $\P(S_n \in x+I, \, B_{n,x}^{m})=0$ if $n < m$.
For $n \ge m$, plainly,
\begin{equation*}
\begin{split}
	\P\left(S_n \in x+I, \, B_{n,x}^{m}\right)
	& \le n^m \, \P\left( S_n \in x+I, \, \min_{1 \le i \le m}
	X_i > \xi_{n,x}, \, \max_{m+1 \le j \le n} X_j \le \xi_{n,x} \right) \\
	& = n^m \, \int_{(y,w) \in (0, x]^2}
	\P\left(S_m \in \dd y, \, \min_{1 \le i \le m}
	X_i \in \dd w \right) \ind_{\{w > \xi_{n,x}\}} \\
	& \qquad \qquad \qquad \qquad
	\qquad \qquad \P\left(S_{n-m} \in x -y +I, \, B_{n-m,x}^0 \right) \,.
\end{split}
\end{equation*}
Since $w > \xi_{n,x} := a_n^\gamma x^{1-\gamma}$ if and only if
$a_n < (\frac{w}{x})^{1/\gamma} x$, i.e.\ $n <A((\frac{w}{x})^{1/\gamma} x)$,
we obtain
\begin{equation} \label{eq:cheint}
\begin{split}
	& \sum_{1 \le n \le A(\delta x)} n^{\bar \ell}\, \P(S_n \in x+I, \, B_{n,x}^{m}) \\
	& \qquad \qquad \le \int_{(y,w) \in (0, x]^2} \Bigg\{
	\P\left(S_m \in \dd y, \, \min_{1 \le i \le m}
	X_i \in \dd w \right) \\
	& \qquad \qquad \qquad  \qquad \qquad \
	\sum_{m \le n \le A\left(\{(\frac{w}{x})^{1/\gamma} \wedge \delta\} x\right)}
	n^{\bar\ell + m}\,
	\P\left(S_{n-m} \in x -y +I, \, B_{n-m,x}^0 \right) \Bigg\}\,,
\end{split}
\end{equation}
where we set $a \wedge b := \min\{a,b\}$. 
The contribution to the sum of the single term $n=m$ can be bounded as follows:
since $S_{n-m} = S_0 = 0$, by \eqref{eq:nec2}
\begin{equation*}
\begin{split}
	\int_{(y,w) \in (0, x]^2}
	\P\left(S_m \in \dd y, \, \min_{1 \le i \le m}
	X_i \in \dd w \right)
	\ind_{\{0 \in x-y+I\}} 
	& \le \P(S_m \in x+I)
	= o \left(\frac{A(x)}{x}\right) \,,
\end{split}
\end{equation*}
which is negligible for \eqref{eq:ifwe}.
Consequently, we can restrict the sum in \eqref{eq:cheint} to $n \ge m+1$.
In this case $n \le (m+1)(n-m) \lesssim_m (n-m)$, and renaming $n-m$ as $n$ we simplify
\eqref{eq:cheint} as
\begin{equation} \label{eq:cheint2}
\begin{split}
	& \sum_{n \le A(\delta x)} n^{\bar \ell}\, \P(S_n \in x+I, \, B_{n,x}^{m}) \\
	& \qquad \qquad \lesssim_m \int_{(y,w) \in (0, x]^2} \Bigg\{
	\P\left(S_m \in \dd y, \, \min_{1 \le i \le m}
	X_i \in \dd w \right) \\
	& \qquad \qquad \qquad  \qquad \qquad \
	\sum_{1 \le n \le A\left(\{(\frac{w}{x})^{1/\gamma} \wedge \delta\} x\right)}
	n^{\bar\ell + m}\,
	\P\left(S_{n} \in x -y +I, \, B_{n,x}^0 \right) \Bigg\} \,.
\end{split}
\end{equation}
We split the domain of integration in \eqref{eq:cheint2} as 
$(0,x]^2 = J_1 \cup J_2 \cup J_3 \cup J_4$, where
\begin{gather*}
	J_1 := \{y \le x - (\delta^\gamma x \wedge w)\} \,, \qquad
	J_2 := \{y > x-1\} \,, \\
	J_3 := \{w > \delta^\gamma x,\, y \in (x - \delta^\gamma x, x-1]\} \,, 
	\qquad
	J_4 := \{w \le \delta^\gamma x,\, y \in (x - w, x-1]\} \,.
\end{gather*}
and consider each sub-domain separately.

\medskip

\emph{Contribution of $J_1$}.
Let us set
\begin{equation}\label{eq:hatdelta}
	\hat\delta = \hat\delta(w,x,\delta) := 
	\left(\frac{w}{x}\right)^{1/\gamma} \wedge \delta \,,
\end{equation}
so that $J_1 = \{y  \le x - \hat\delta^\gamma x\}$. 
Since $x-y \ge \hat\delta^\gamma x$ on $J_1$,
the sum in \eqref{eq:cheint2} is bounded by
\begin{equation*}
	\sum_{1 \le n \le A\left(\hat\delta x\right)}
	n^{\bar\ell + m}\,
	\sup_{z \ge \hat\delta^{\gamma} x}
	\P\left(S_{n} \in z +I, \, B_{n,x}^0 \right)
	\lesssim e^{-\frac{1}{\hat\delta^{\gamma/3}}} \, 
	\frac{A(x)^{\bar\ell+m+1}}{x} \,,
\end{equation*}
where the inequality follows by Lemma~\ref{th:nobig}, with $\delta$ replaced by
$\hat\delta$ and $\ell$ replaced by $\bar\ell + m$.
The contribution of $J_1$
to the integral in \eqref{eq:cheint2} is thus bounded by
\begin{equation}\label{eq:con}
\begin{split}
	& \lesssim \frac{A(x)^{\bar\ell+m+1}}{x}
	\int_{w \in (0, x], \ y \in (0, x - (\delta^\gamma x \wedge w)]}
	\P\left(S_m \in \dd y, \, \min_{1 \le i \le m}
	X_i \in \dd w \right)  \, e^{-\frac{1}{(\frac{w}{x})^{1/3} \wedge \delta^{\gamma/3}}} \,.
\end{split}
\end{equation}

We split this integral in the sub-domains 
$J_1^\le := \{w \le \delta^\gamma x\}$
and $J_1^> := \{w > \delta^\gamma x\}$. Bounding
$\P(X > \delta^\gamma x) \lesssim 1/A(\delta^\gamma x) 
\lesssim \delta^{-2\gamma\alpha}/A(x)$,
by the lower bound in \eqref{eq:Potter} with $\epsilon = \alpha$ and $\rho = \delta$,
the contribution of $J_1^>$ is controlled by
\begin{equation*}
\begin{split}
	\frac{A(x)^{\bar\ell+m+1}}{x} \, e^{-\frac{1}{\delta^{\gamma/3}}}
	\P\left(\min_{1 \le i \le m} X_i > \delta^\gamma x\right) & =
	e^{-\frac{1}{\delta^{\gamma/3}}} \, \frac{A(x)^{\bar\ell+m+1}}{x} \, 
	\P\left(X > \delta^\gamma x\right)^m \\
	& \lesssim \frac{e^{-\frac{1}{\delta^{\gamma/3}}}}{\delta^{2\gamma \alpha m}} \,
	\frac{A(x)^{\bar\ell+1}}{x} \,,
\end{split}
\end{equation*}
which gives no problem for \eqref{eq:ifwe}. Next we bound the contribution of 
$J_1^\le$ to \eqref{eq:con} by
\begin{equation*}
	\frac{A(x)^{\bar\ell+m+1}}{x} \,
	\int_{w \in (0, \delta^\gamma x]} \P\left(\min_{1 \le i \le m}
	X_i \in \dd w \right)  \, \phi\left(\frac{w}{x}\right) \,,
	\qquad \text{with} \qquad \phi(t) := e^{-\frac{1}{t^{1/3}}} \,.
\end{equation*}
We set $G(w) := \P\left(\min_{1 \le i \le m} X_i > w \right)$, so that
$\P\left(\min_{1 \le i \le m} X_i \in \dd w \right) = -\dd G(w)$.
Integrating by parts, since the contribution of the boundary terms is negative, we get
\begin{equation*}
	\frac{A(x)^{\bar\ell+m+1}}{x} \,
	\int_{0}^{\delta^\gamma x} G(w)
	\, \phi'\left(\frac{w}{x}\right) \, \frac{1}{x}
	\, \dd w
	\lesssim \frac{A(x)^{\bar\ell+m+1}}{x} \,
	\int_{0}^{\delta^\gamma x} \frac{1}{A(w)^m}
	\, \phi'\left(\frac{w}{x}\right) \, \frac{1}{x}
	\, \dd w \,.
\end{equation*}
Performing the change of variable $v = w/x$,
since $A(vx) \gtrsim A(x) v^{2\alpha}$ 
by \eqref{eq:Potter}, we obtain
\begin{equation*}
	\lesssim \frac{A(x)^{\bar\ell+1}}{x}
	\int_{0}^{\delta^\gamma} \frac{\phi'\left(v\right)}{v^{2\alpha m}}
	\, \dd v = \frac{A(x)^{\bar\ell+1}}{x}
	\int_{0}^{\delta^\gamma} \frac{e^{-\frac{1}{v^{1/3}}}}
	{3 \, v^{2\alpha m + 4/3}}
	\, \dd v \lesssim
	\frac{A(x)^{\bar\ell+1}}{x}
	\int_{0}^{\delta^\gamma} e^{-\frac{1}{2 v^{1/3}}}
	\, \dd v \,,
\end{equation*}
which again gives no problem for \eqref{eq:ifwe}.
Overall, the contribution of $J_1$ is under control.

\medskip

\emph{Contribution of $J_2$.}
By Lemma~\ref{th:basic}, for $x-y \le 1$ we have
\begin{equation} \label{eq:analo1}
	\sum_{n \in \N} n^{\bar \ell + m} \, \P\left( S_{n-1} \in x-y+I \right)
	\le \sum_{n \in \N} \frac{n^{\bar \ell + m}}{a_{n-1}} \, e^{-c \frac{n-1}{A(1)}} =: 
	C_{\bar \ell + m}
	< \infty \,,
\end{equation}
because $z \mapsto A(z)$ is increasing,
hence the contribution of $J_2$ to \eqref{eq:cheint2} is bounded by
\begin{equation*}
	\int_{(y,w) \in J_2} \P\left(S_m \in \dd y, \, \min_{1 \le i \le m}
	X_i \in \dd w \right)
	C_{\bar\ell + m} \lesssim_{\bar\ell,m} \P(S_m \in (x-1,x])
	= o\left(\frac{A(x)}{x}\right) \,,
\end{equation*}
where the last equality is a consequence of \eqref{eq:ns12}, see \eqref{eq:nec2}.
This shows that $J_2$ gives a negligible contribution to \eqref{eq:ifwe}.

\medskip

\emph{Technical interlude.}
Before analyzing $J_3$ and $J_4$, let us elaborate on \eqref{eq:ggoal}
(where we rename $\ell$ as $k$ and $x$ as $z$ for later convenience).
Our induction hypothesis that \eqref{eq:ggoal} holds for all $k \ge \bar \ell + 1$
can be rewritten as follows:
for every $\delta > 0$ there is $\bar z_k(\delta) < \infty$ such that
\begin{equation}\label{eq:rewri}
	\sum_{1 \le n \le A(\delta z)} n^k \, \P(S_n \in z+I)
	\le f_k(\delta) \, \frac{A(z)^{k+1}}{z}\,, \qquad \forall
	k \ge \bar\ell + 1, \ \forall z \ge \bar z_k(\delta) \,,
\end{equation}
where we set $f_k(\delta) := 2 \limsup_{x\to\infty} (\ldots)$
in \eqref{eq:ggoal} (with $\ell$ replaced by $k$), so that
\begin{equation}\label{eq:limit}
	\lim_{\delta \to 0} f_k(\delta) = 0 \,.
\end{equation}
We also claim that
\begin{equation}\label{eq:rewricon}
	\sum_{n\in\N} n^k \, \P(S_n \in z+I)
	\lesssim_k \frac{A(z)^{k+1}}{z}\,, \qquad \forall
	k \ge \bar\ell + 1, \ \forall z \ge 1 \,.
\end{equation}
To show this, fix $\bar\delta_k \in (0,1]$ such that $f_k(\bar\delta_k) \le 1$, 
by \eqref{eq:limit}.
If we restrict the sum to $n \le A(\bar\delta_k z)$,
relation \eqref{eq:rewri} shows that \eqref{eq:rewricon} holds
for $z \ge \bar z_k(\bar\delta)$, while for $z \le \bar z_k(\bar\delta)$
\begin{equation*}
	\sum_{n\le A(\bar\delta_k z)} n^k \, \P(S_n \in z+I) \le \sum_{n\le A( z)} n^k
	\le A(z)^{k+1} \le \bar z_k(\bar\delta_k) \,
	\frac{A(z)^{k+1}}{z} \lesssim_k \frac{A(z)^{k+1}}{z} \,.
\end{equation*}
It remains to prove that \eqref{eq:rewricon} holds for the sum restricted
to the terms with $n > A(\bar\delta_k z)$: applying \eqref{eq:basic}
followed by $a_n \ge a_{A(\bar\delta_k z)} = \bar\delta_k z \gtrsim_k z$, we can write
\begin{equation*}
	\sum_{n > A(\bar\delta_k z)} n^k \, \P(S_n \in z+I) \lesssim
	\sum_{n > A(\bar\delta_k z)} \frac{n^k}{a_n} \, e^{-c \frac{n}{A(z)}}
	\lesssim_k \frac{A(z)^{k+1}}{z} \, \left\{
	\sum_{n \in\N}\frac{1}{A(z)} 
	\left(\frac{n}{A(z)}\right)^k \, e^{-c \frac{n}{A(z)}} \right\} \,.
\end{equation*}
The bracket is a Riemann sum 
which converges to the integral 
$\int_0^\infty t^k \, e^{-ct} \, \dd t < \infty$ as $z \to \infty$.
Being a continuous function of $z$ (by dominated convergence),
the sum is uniformly bounded for $z \in [1,\infty)$.
The proof of \eqref{eq:rewricon} is completed.

Let us finally rewrite \eqref{eq:ns12}, which is equivalent to our assumption \eqref{eq:ns12'},
as follows: defining $g(\eta) := 2 \, \limsup_{x\to\infty} (\ldots)$ in \eqref{eq:ns12},
for every $\eta \in (0,1]$ there is $\tilde z(\eta) < \infty$ such that
\begin{equation}\label{eq:ns12new}
	\int_{s \in [1,\eta z)} 
	\frac{A(s)^2}{s} \, \P(X \in z - \dd s) \le
	g(\eta) \, \frac{A(z)}{z}  \,, \qquad \forall z \ge \tilde z(\eta) \,,
\end{equation}
with
\begin{equation}\label{eq:moreover}
	\lim_{\eta \to 0} g(\eta) = 0 \,.
\end{equation}
Moreover, we claim that for any $\zeta \in (0,1)$
\begin{equation}\label{eq:ns12newcon}
	\int_{s \in [1, \zeta z]} 
	\frac{A(s)^2}{s} \, \P(X \in z - \dd s) \lesssim_\zeta
	\frac{A(z)}{z}  \,, \qquad \forall z \ge 1 \,.
\end{equation}
To show this, let us fix $\bar\eta \in (0, 1)$ such that $g(\bar\eta) \le 1$, 
and split $\int_{s \in [1, \zeta z]} = \int_{s \in [1,\bar\eta z)} 
+ \int_{s \in [\bar\eta z , \zeta z]}$.
The contribution of $[1,\bar\eta z)$ is controlled by relation \eqref{eq:ns12new}
for $z \ge \tilde z(\bar\eta)$, while for $z < \tilde z(\bar\eta)$
it is enough to note that $c := \inf_{z \in [1, \tilde z(\bar\eta)]} \frac{A(z)}{z} > 0$
while
\begin{equation*}
	\sup_{z \in [1, \tilde z(\bar\eta)]} \int_{s \in [1,\bar\eta z)} 
	\frac{A(s)^2}{s} \, \P(X \in z - \dd s) \le A(\tilde z(\bar\eta))^2
	=: C < \infty \,,
\end{equation*}
hence \eqref{eq:ns12newcon} holds restricted to $[1,\bar\eta z)$.
Finally, for the integral over $[\bar\eta z , \zeta z]$
we estimate
\begin{equation*}
	\int_{s \in [\bar\eta z, \zeta z]} 
	\frac{A(s)^2}{s} \, \P(X \in z - \dd s) \le
	\frac{A(z)^2}{\bar\eta \, z} \P(X \ge (1-\zeta) z)
	\lesssim_\eta \frac{A(z)}{z} \,,
\end{equation*}
completing the proof of \eqref{eq:ns12newcon}.

\medskip

\emph{Contribution of $J_3$.}
We recall that
\begin{equation*}
	J_3 = \{w > \delta^\gamma x, \, y \in (x-\delta^\gamma x, x-1]\} \,.
\end{equation*}
For $m=1$, since
$S_m = \min_{1 \le i \le m} X_i = X_1$,
we have $J_3 = \{y \in (x-\delta^\gamma x, x-1], \, w=y\}$.
Applying \eqref{eq:rewricon} with
$k = \bar\ell+1$ and $z=x-y$, the contribution of $J_3$
to \eqref{eq:cheint2} is bounded by
\begin{equation*}
	\lesssim_{\bar\ell} \int_{y \in (x-\delta^\gamma x, x-1]} \P(X \in \dd y) \,
	\frac{A(x-y)^{\bar \ell + 2}}{x-y} \le
	A(x)^{\bar \ell}
	\int_{s \in [1, \delta^\gamma x)} \P(X \in x - \dd s) \,
	\frac{A(s)^2}{s} \,,
\end{equation*}
where we have performed the change of variable $s = x-y$.
Applying \eqref{eq:ns12}, or equivalently \eqref{eq:ns12new}-\eqref{eq:moreover},
it follows immediately that $J_3$ gives no problem for \eqref{eq:ifwe},
when $m=1$.

Next we assume that $m \ge 2$. It is convenient to set
\begin{equation}\label{eq:LambdaM}
	\Lambda_m := \min_{1 \le i \le m} X_i \,, \qquad
	M_m := \max_{1 \le i \le m} X_i \,.
\end{equation}
Applying \eqref{eq:rewricon} for $k = \bar \ell + m$, the contribution of 
$J_3$ to \eqref{eq:cheint2} is bounded by
\begin{equation} \label{eq:trel0}
	\lesssim_{\bar\ell, m} \int_{y \in (x-\delta^\gamma x, x-1]} \Bigg\{
	\P\left(S_m \in \dd y, \, \Lambda_m > \delta^\gamma x \right)
	\frac{A(x-y)^{\bar \ell + m + 1}}{x-y} \Bigg\} \,.
\end{equation}

We need to estimate $\P\left(S_m \in \dd y, \, \Lambda_m > \delta^\gamma x \right)$.
The events $\{X_j \ge \max_{i \in \{1,\ldots, m\} \setminus \{j\}} X_i\}$
for $j=1,\ldots, m$ cover the whole probability space and have the same probability,
hence
\begin{equation} \label{eq:doli}
\begin{split}
	& \P\left(S_m \in \dd y, \, \Lambda_m \in \dd w \right) \le m \,
	\P\left(S_m \in \dd y, \, \Lambda_{m} \in \dd w, \, 
	M_{m-1} \le X_m \right) \\
	& \ \ \le m
	\int_{u,v \in (0, y]}
	\P\left(S_{m-1} \in \dd u, \, \Lambda_{m-1} \in \dd w ,
	\, M_{m-1} \in \dd v\right) \ind_{\{v \le y-u\}} \,
	\P(X \in \dd y - u) \,,
\end{split}
\end{equation}
where $\ind_{\{v \le y-u\}}$ comes from $\{M_{m-1} \le X_m\}$.
Note that
\begin{equation*}
	u = S_{m-1} \le (m-1) M_{m-1} = (m-1) v \le (m-1)(y-u) \,,
\end{equation*}
which yields the restriction $u \le \frac{m-1}{m}y$.
In particular, for $y \le x$ we have $y \le \frac{m-1}{m}x$,
which by \eqref{eq:doli} yields the bound
\begin{equation}\label{eq:ufff}
\begin{split}
	& \P\left(S_m \in \dd y, \, \Lambda_m \in \dd w \right) \\
	& \qquad \le m 
	\int_{u \in (0, \frac{m-1}{m}x]}
	\P\left(S_{m-1} \in \dd u, \, \Lambda_{m-1} \in \dd w \right)
	\P(X \in \dd y - u) \,.
\end{split}
\end{equation}
Plugging this into \eqref{eq:trel0}, the contribution of $J_3$
to \eqref{eq:cheint2} is bounded by
\begin{equation} \label{eq:trel}
\begin{split}
	& \qquad \lesssim_{m} 
	\int_{u \in (0, \frac{m-1}{m}x]}
	\P\left(S_{m-1} \in \dd u, \, \Lambda_{m-1} > \delta^\gamma x \right) \\
	& \qquad \qquad \qquad \qquad \qquad \qquad \Bigg\{
	\int_{y \in (x-\delta^\gamma x, x-1]} \P(X \in \dd y - u)
	\frac{A(x-y)^{\bar \ell + m + 1}}{x-y} \Bigg\} \,.
\end{split}
\end{equation}

With the change of variables $s = x-y$, the term in bracket 
in \eqref{eq:trel} becomes
\begin{equation*}
\begin{split}
	\int_{s \in [1, \delta^\gamma x)} \P(X \in x - u - \dd s)
	& \frac{A(s)^{\bar \ell + m + 1}}{s} 
	\le A(\delta^\gamma x)^{\bar \ell + m - 1} \,
	\int_{s \in [1, \delta^\gamma x)} \P(X \in x - u - \dd s)
	\frac{A(s)^{2}}{s} \\
	& \le A(\delta^\gamma x)^{\bar \ell + m - 1} \,
	\int_{s \in [1, m\delta^\gamma (x-u))} \P(X \in x - u - \dd s)
	\frac{A(s)^{2}}{s} \,,
\end{split}
\end{equation*}
where in the last inequality we have enlarged the domain of integration,
for $u \le \frac{m-1}{m} x$ (as in \eqref{eq:trel}).
Since $x-u \ge \frac{1}{m} x$, we can apply \eqref{eq:ns12new} 
with $z = x-u$ and $\eta = m \delta^\gamma$,
provided $x$ is large enough
(so that $\frac{1}{m} x \ge \tilde z(m \delta^\gamma)$). This allows to bound \eqref{eq:trel} by
\begin{equation*}
\begin{split}
	& \le A(\delta^\gamma x)^{\bar \ell + m - 1} 
	\int_{u \in (0, \frac{m-1}{m}x]}
	\P\left(S_{m-1} \in \dd u, \, \Lambda_{m-1} > \delta^\gamma x \right) \,
	\left\{ g(m \delta^\gamma) \, \frac{A(x-u)}{x-u} \right\} \\
	& \le A(\delta^\gamma x)^{\bar \ell + m - 1} 
	\P\left(\Lambda_{m-1} > \delta^\gamma x \right) \,
	\, g(m \delta^\gamma) \, \frac{A(x)}{\frac{1}{m}x}  \,,
\end{split}	
\end{equation*}
and since $\P\left(\Lambda_{m-1} > t \right) = \P(X >t)^{m-1}
\sim 1/A(t)^{m-1}$ the last line is
\begin{equation*}
	\sim A(\delta^\gamma x)^{\bar \ell} 
	\, g(m \delta^\gamma) \, \frac{m \, A(x)}{x} \\
	\lesssim_m g(m \delta^\gamma) \,
	\frac{A(x)^{\bar\ell + 1}}{x} \,.
\end{equation*}
Plugging this bound into \eqref{eq:ifwe}
and applying \eqref{eq:moreover}, we have shown that the contribution
of $J_3$ is under control.

\medskip

\emph{Contribution of $J_4$.}
Note that $J_4 := \{w \le \delta^\gamma x, \, y \in (x - w, x-1]\}$
is empty for $m=1$, provided $\delta > 0$ is small enough:
in fact, relations $y > x-w$ and $w \le \delta^\gamma x$
cannot be fulfilled simultaneously, since $y=w$ for $m=1$.
Henceforth we assume that $m\ge 2$.

Recalling \eqref{eq:LambdaM} and
plugging \eqref{eq:rewricon}
with $k = \bar \ell + m$ into \eqref{eq:cheint2}, 
the contribution of $J_4$ is bounded as follows:
\begin{equation} \label{eq:trel2B}
\begin{split}
	& \lesssim_{\bar\ell, m} \int_{w \in (1, \delta^\gamma x], \, y \in (x - w, x-1]} 
	\P\left(S_m \in \dd y, \, \Lambda_m \in \dd w \right)
	\frac{A(x-y)^{\bar \ell + m + 1}}{x-y} \,.
\end{split}
\end{equation}
Our goal is to show that this satisfies \eqref{eq:ifwe}.
It is convenient to set for $C, D \in (0,\infty)$
\begin{equation}\label{eq:Theta}
\begin{split}
	\Theta_{\bar\ell, m}^{C,D}(x,\delta) & :=
	\int_{w \in [C, \delta^\gamma x], \, y \in [x - Dw, x-1]} 
	\P\left(S_m \in \dd y, \, \Lambda_m \in \dd w \right)
	\frac{A(x-y)^{\bar \ell + m + 1}}{x-y} \,,
\end{split}	
\end{equation}
so that \eqref{eq:trel2B} is bounded from above
by $\Theta_{\bar\ell, m}^{C,D}(x,\delta)$
with $C=D=1$. Consequently, to prove our goal \eqref{eq:ifwe} it is enough to show 
the following: recalling that $\bar \ell \in \{0, \ldots, \kappa_\alpha\}$ is fixed,
\begin{equation}\label{eq:ifwelast}
	\lim_{\delta \to 0} \left( \limsup_{x\to\infty}
	\frac{x}{A(x)^{\bar\ell + 1}} 
	\Theta_{\bar\ell, m}^{C,D}(x,\delta) \right) = 0 \,, \qquad
	\forall C, D \in (0,\infty) \,, \ \
	\forall m \in \{1,2,\ldots, \bar m\}\,.
\end{equation}

\smallskip

Note that
\begin{equation*}
\begin{split}
	\P\left(S_m \in \dd y, \, \Lambda_m \in \dd w \right) 
	& \le m \,
	\P\left(S_m \in \dd y, \, X_m \in \dd w, \, 
	\Lambda_{m-1} \ge w \right) \\
	& = m \,
	\P(X \in \dd w) \, \P\left( S_{m-1} \in \dd y - w, \,
	\Lambda_{m-1} \ge w \right) \,,
\end{split}
\end{equation*}
therefore
\begin{equation*}
\begin{split}
	& \Theta^{C,D}_{\bar\ell, m}(x,\delta) 
	\lesssim_m 
	\int_{w \in [C, \delta^\gamma x]} \P(X \in \dd w) \\
	& \qquad \qquad \qquad \qquad \Bigg\{
	\int_{y \in [x-Dw, x-1]}
	\P\left(S_{m-1} \in \dd y - w, \, \Lambda_{m-1} \ge w \right)  
	\,\frac{A(x-y)^{\bar \ell + m + 1}}{x-y} \Bigg\} \,.
\end{split}
\end{equation*}
Next we change variable from $y$ to $s = (w + x) - y$ in the inner integral
(for fixed $w$).
Since $\dd y - w = x - \dd s$ and $x-y = s-w$, we get
\begin{equation*}
\begin{split}
	& \int_{w \in [C,\delta^\gamma x]} \P(X \in \dd w)
	\Bigg\{ \int_{s \in [1+w, (D+1)w]}
	\P\left(S_{m-1} \in x - \dd s, \, \Lambda_{m-1} \ge w \right)  
	\frac{A(s-w)^{\bar \ell + m + 1}}{s-w} \Bigg\} \,.
\end{split}
\end{equation*}
Writing $\P\left(S_{m-1} \in x - \dd s, \, \Lambda_{m-1} \ge w \right) 
= \int_{u \in [0, \infty)} \P\left(S_{m-1} \in x - \dd s, \, \Lambda_{m-1} \in \dd u \right)
\, \ind_{\{u \ge w\}}$
and observing that
\begin{equation*}
	\{w \in [C,\delta^\gamma x], \, s \in [1+w, (1+D)w]\}
	\subseteq \{s \in [1+C, (1+D)\delta^\gamma x], \, w \in [\tfrac{s}{1+D}, s-1]\} \,,
\end{equation*}
we obtain by Fubini's theorem
\begin{equation*}
\begin{split}
	\Theta^{C,D}_{\bar\ell, m}(x,\delta) \lesssim_m 
	& \int_{s \in [1+C, (1+D)\delta^\gamma x], \, u \in [0,\infty)} 
	\P\left(S_{m-1} \in x - \dd s, \, \Lambda_{m-1} \in \dd u \right) \\
	& \qquad \qquad \qquad
	\Bigg\{ \int_{w \in [\frac{s}{1+D}, s-1]}
	\P(X \in \dd w) \,
	\frac{A(s-w)^{\bar \ell + m + 1}}{s-w} \,
	\ind_{\{w \le u\}} \Bigg\} \,.
\end{split}
\end{equation*}
We can restrict the domain of integration for $u$ to
$[\frac{s}{1+D}, \infty)$, because for $u < \frac{s}{1+D}$ the inner
integral vanishes, due to $\ind_{\{w \le u\}}$.
After this restriction, we drop $\ind_{\{w \le u\}}$ and
change variable from $w$ to $t = s-w$ in the inner integral, getting
\begin{equation}\label{eq:atlas}
\begin{split}
	\Theta^{C,D}_{\bar\ell, m}(x,\delta) \lesssim_m 
	& \int_{s \in [1+C, (1+D)\delta^\gamma x], \, u \in [\frac{s}{1+D},\infty)} 
	\P\left(S_{m-1} \in x - \dd s, \, \Lambda_{m-1} \in \dd u \right) \\
	& \qquad \qquad \qquad
	\Bigg\{ \int_{t \in [1, \frac{Ds}{1+D}]}
	\P(X \in s - \dd t) \,
	\frac{A(t)^{\bar \ell + m + 1}}{t} \Bigg\} \,.
\end{split}
\end{equation}
Applying \eqref{eq:ns12newcon} with $z=s$ and $\zeta = \frac{D}{1+D}$ 
allows to bound the term in bracket by
\begin{equation}\label{eq:atlas2}
\begin{split}
	A(s)^{\bar \ell + m - 1}
	\int_{t \in [1, \frac{Ds}{1+D}]}
	\P(X \in s - \dd t) \,
	\frac{A(t)^{2}}{t} \lesssim_D \frac{A(s)^{\bar \ell + m}}{s} \,,
\end{split}
\end{equation}
hence from \eqref{eq:atlas} we get the crucial estimate
\begin{equation} \label{eq:cruind}
\begin{split}
	\Theta^{C,D}_{\bar\ell, m}(x,\delta) 
	& \lesssim_m  \int_{s \in [1+C, 
	(1+D)\delta^\gamma x], \, u \in [\frac{s}{1+D},\infty)} 
	\P\left(S_{m-1} \in x - \dd s, \, \Lambda_{m-1} \in \dd u \right)
	\frac{A(s)^{\bar \ell + m}}{s} \,.
\end{split}
\end{equation}

Let us first consider the case $m = 2$.
Then $S_{m-1} = X_1$, hence by \eqref{eq:ns12new} with $z=x$ and
$\eta = (1+D)\delta^\gamma$ we get
\begin{equation*}
	\Theta^{C,D}_{\bar\ell, 2}(x,\delta) 
	\lesssim_m A\left(x\right)^{\bar \ell} 
	\int_{s \in [1+C, (1+D)\delta^\gamma x]} 
	\P\left(X \in x - \dd s \right)
	\frac{A(s)^{2}}{s} \lesssim g\left((1+D)\delta^\gamma\right)
	\, \frac{A(x)^{\bar\ell+1}}{x} \,,
\end{equation*}
and recalling \eqref{eq:moreover} it follows that \eqref{eq:ifwelast} is proved.

Henceforth we assume that $m \ge 3$.
We start focusing on the contribution to \eqref{eq:cruind}
given by $u \ge \delta^\gamma x$, which is bounded by
\begin{equation*}
	\int_{s \in [1+C, 
	(1+D)\delta^\gamma x]} 
	\P\left(S_{m-1} \in x - \dd s, \, \Lambda_{m-1} \ge \delta^\gamma x \right)
	\frac{A(s)^{\bar \ell + m}}{s} \,,
\end{equation*}
and applying \eqref{eq:ufff} with $m$ replaced by $m-1$ we get, by Fubini's theorem,
\begin{equation} \label{eq:dela}
\begin{split}
	& \lesssim_m \int_{s \in [1+C, (1+D)\delta^\gamma x]} 
	\frac{A(s)^{\bar \ell + m}}{s} \\
	& \qquad \qquad \qquad \left\{ \int_{u \in (0, \frac{m-2}{m-1} x]}
	\P\left(S_{m-2} \in \dd u, \, \Lambda_{m-2} \ge \delta^\gamma x \right)
	\, \P( X \in x - u - \dd s) \right\} \\
	& \le A((1+D)\delta^\gamma x)^{\bar\ell + m -2} \int_{u \in (0, \frac{m-2}{m-1} x]}
	\P\left(S_{m-2} \in \dd u, \, \Lambda_{m-2} \ge \delta^\gamma x \right) \\
	& \qquad \qquad \qquad \qquad \qquad \left\{ 
	\int_{s \in [1+C, (1+D)\delta^\gamma x]} \frac{A(s)^{2}}{s}
	\P( X \in x - u - \dd s) \right\} \,.
\end{split}
\end{equation}
Concerning the inner integral, we enlarge the domain of integration
to $[1, \hat \eta (x-u))$ with
\begin{equation} \label{eq:bareta}
	\hat \eta = \hat\eta_{m,D,\delta} 
	:= 2(1+D)\delta^\gamma \sup_{u \le \frac{m-2}{m-1}x} \frac{x}{x-u}
	= 2(m-1) (1+D)\delta^\gamma \,,
\end{equation}
after which we can apply \eqref{eq:ns12new} with $z = x-u$
and $\eta = \hat \eta$
(which satisfies $z \ge \tilde z(\hat \eta)$ provided $x$ is large enough,
since $x-u \ge \frac{x}{m-1}$). In this way, 
\begin{equation*}
	\left\{ \int_{s \in [1+C, (1+D)\delta^\gamma x]} \frac{A(s)^{2}}{s}
	\P( X \in x - u - \dd s) \right\} \le
	g(\hat\eta) \, \frac{A(x-u)}{x-u} \lesssim_m
	g(\hat\eta) \, \frac{A(x)}{x} \,,
\end{equation*}
where the last inequality holds again because $x-u \ge \frac{x}{m-1}$
(recall that $x \mapsto A(x)/x$ is regularly varying
with index $\alpha-1 < 0$). Then
\eqref{eq:dela} is bounded by
\begin{equation*}
	\lesssim_{D} 
	A(\delta^\gamma x)^{\bar\ell + m -2} \,
	g(\hat\eta) \,\frac{A(x)}{x} \, \P\left(\Lambda_{m-2} \ge \delta^\gamma x \right)
	\lesssim g(\hat\eta) \, \frac{A(x)^{\bar\ell + 1}}{x} \,,
\end{equation*}
because $\P(\Lambda_{m-2} \ge t) = \P(X \ge t)^{m-2} \sim 1/A(t)^{m-2}$.
Looking back at our goal \eqref{eq:ifwelast}, and recalling \eqref{eq:bareta}
and \eqref{eq:moreover},
the contribution of $u \ge \delta^\gamma x$ to \eqref{eq:cruind} is under control.

It finally remains to consider the contribution of $u < \delta^\gamma x$ to \eqref{eq:cruind}:
since
\begin{equation*}
	\{s \in [1+C, (1+D)\delta^\gamma x], \, u \in [\tfrac{s}{1+D}, \delta^\gamma x]\}
	= \{u \in [\tfrac{1+C}{1+D}, \delta^\gamma x], \,
	s \in [1+C, (1+D)u] \} \,,
\end{equation*}
applying Fubini's theorem we can write such a contribution as follows:
\begin{equation*}
\begin{split}
	& \int_{u \in [\tfrac{1+C}{1+D}, \delta^\gamma x], \,
	s \in [1+C, (1+D)u]} 
	\P\left(S_{m-1} \in x - \dd s, \, \Lambda_{m-1} \in \dd u \right)
	\frac{A(s)^{\bar \ell + m}}{s} \\
	& = \int_{u \in [\tfrac{1+C}{1+D}, \delta^\gamma x], \,
	y \in [x- (1+D)u, x-(1+C)]} 
	\P\left(S_{m-1} \in \dd y, \, \Lambda_{m-1} \in \dd u \right)
	\frac{A(x-y)^{\bar \ell + m}}{x-y} \\
	& \le \Theta^{C',D'}_{\bar\ell, m-1}(x,\delta) \,, \qquad
	\text{with} \qquad C' := \tfrac{1+C}{1+D} \,, \ D' := 1+D
	\,,
\end{split}
\end{equation*}
where for the last inequality we recall \eqref{eq:Theta}. Therefore
\begin{equation} \label{eq:conclude}
	\lim_{\delta \to 0} \left( \limsup_{x\to\infty}
	\frac{x}{A(x)^{\bar\ell + 1}} 
	\Theta_{\bar\ell, m}^{C,D}(x,\delta) \right) \le
	\lim_{\delta \to 0} \left( \limsup_{x\to\infty}
	\frac{x}{A(x)^{\bar\ell + 1}} 
	\Theta_{\bar\ell, m-1}^{C',D'}(x,\delta) \right) \,.
\end{equation}
We can then conclude by induction on $m$. In fact, we have already proved that \eqref{eq:ifwelast}
holds for $m=2$, and relation \eqref{eq:conclude} shows that if it holds for
$m-1$ then it holds for $m$.
\end{proof}

\section{Proof of Proposition~\ref{pr:main1} and
and of Theorems~\ref{th:1/2} and~\ref{th:cont}}
\label{sec:prmain}

\subsection{Proof of Proposition~\ref{pr:main1}}\label{sec:suff}

We can reformulate condition \eqref{eq:suff0} equivalently as follows:
there exist $x_0, C \in (0,\infty)$ such that
(for the same $\epsilon > 0$ as in \eqref{eq:suff0})
\begin{equation} \label{eq:suff}
	\frac{F((x,x+s])}{F((x,\infty))}
	\le C \left(\frac{s}{x}\right)^{1-2\alpha+\epsilon}  ,
	\qquad \ \forall x \ge x_0 \,, \ \forall s \in [1,x] \,.
\end{equation}
It is clear that \eqref{eq:suff} implies \eqref{eq:suff0},
and the converse also holds, by a contradiction argument.

Then it suffices to show that condition \eqref{eq:suff} implies \eqref{eq:ns}
(which is equivalent to \eqref{eq:nsbis0}, by Lemma~\ref{lem:tec}).
For $x \ge 2x_0$ and $0 \le s \le \frac{1}{2}x$, by \eqref{eq:suff},
\begin{equation} \label{eq:analo}
	\P(X \in (x-s,x]) \le C \, \P(X > x-s) \left(\frac{s}{x}\right)^{1-2\alpha+\epsilon}
	\lesssim \frac{1}{A( x)} \left(\frac{s}{x}\right)^{1-2\alpha+\epsilon} \,.
\end{equation}
Since $\frac{A(s)^2}{s^2} \, s^{1-2\alpha+\epsilon}$ is regularly varying with index 
$(2\alpha - 2) +1-2\alpha+\epsilon = -1+\epsilon > -1$,
one has $\int_1^{z} \frac{A(s)^2}{s^2} \, s^{1-2\alpha+\epsilon} \, \dd s \lesssim
A(z)^2\, z^{-2\alpha+\epsilon}$ by \cite[Proposition~1.5.8]{cf:BinGolTeu}, 
hence
\begin{equation*}
	\frac{x}{A(x)} \int_1^{\eta x} \frac{A(s)^2}{s^2} \, \P(X \in (x-s,x]) \, \dd s
	\lesssim \frac{A(\eta x)^2 \, 
	(\eta x)^{-2\alpha+\epsilon}}{A(x)^2 \, x^{-2\alpha+\epsilon}}
	\xrightarrow[x\to\infty]{} \eta^{\epsilon} \,.
\end{equation*}
Then \eqref{eq:ns} follows.\qed

\subsection{Proof of Theorem~\ref{th:1/2}}

We recall that $A(x) = L(x) \sqrt{x}$ with $L \in \cR_0$,
and a sufficient condition for the SRT \eqref{eq:SRT}
when $\alpha = \frac{1}{2}$ is given by \eqref{eq:suffcond1/2}.

If \eqref{eq:cond} holds, we can write 
$L^*(x) \lesssim L(x) = A(x)/\sqrt{x}$, hence \eqref{eq:suffcond1/2} is implied by
\begin{equation}\label{eq:suffcond1/2red}
	\exists T \in [0,\infty): \qquad
	\lim_{\eta \to 0} \left(
	\limsup_{x\to\infty} \,
	\frac{\RR_T\big( (1-\eta)x,x \big)}{x} \right) = 0 \,.
\end{equation}
It is easy to show that this holds for $T=0$,
with no extra assumption on $F$.
By \eqref{eq:rr}-\eqref{eq:RR}
\begin{equation} \label{eq:male}
	\RR_0\big( (1-\eta)x,x \big) =
	\int_{(1-\eta)x}^x y \, A(y) \, F(y+I) \, \dd y
	\le x \, A(x) \int_{(1-\eta)x}^x  F(y+I) \, \dd y \,,
\end{equation}
and the last integral can be estimated as follows: by Fubini's theorem
\begin{equation*}
\begin{split}
	\int_{(1-\eta)x}^x  F(y+I) \, \dd y 
	& = 	\int_{(1-\eta)x}^x  \left( \int_\R \ind_{\{t \in (y-h,y]\}}
	\, F(\dd t) \right) \dd y \\
	& \le \int_{t \in ((1-\eta)x-h, x]}  \left( \int_\R \ind_{\{y \in [t,t+h)\}}
	\, \dd y \right) F(\dd t) \\
	& = h \, F\big((1-\eta)x-h,x]\big)
	\underset{x\to\infty}{\sim} h \, \left( \frac{1}{A((1-\eta)x)} - \frac{1}{A(x)}\right) \\
	& \underset{x\to\infty}{\sim} h \, \frac{1}{A(x)} \, 
	\left( \frac{1}{(1-\eta)^\alpha} - 1\right)
	\underset{\eta \to 0}{\sim} h \, \, \frac{1}{A(x)} \, \alpha \, \eta \,.
\end{split}
\end{equation*}
Recalling \eqref{eq:male}, it follows that \eqref{eq:suffcond1/2red} holds.
This proves the first part of Theorem~\ref{th:1/2}.

\smallskip

Next we observe that if $F$ satisfies \eqref{eq:tail1}, then necessarily
$F(x+I) = o(1/A(x))$ as $x\to\infty$. Interestingly, this bound can
be approached as close as one wishes, in the following sense.

\begin{lemma}\label{lem:uao}
Fix two arbitrary
positive sequences $(z_n)_{n\in\N}$, $(\epsilon_n)_{n\in\N}$ such that
$z_n \to \infty$ and $\epsilon_n \to 0$.
For any $A(x) \in \cR_\alpha$, with $\alpha \in (0,1)$, there are
a constant $c \in (0,\infty)$, a
subsequence $(n_k)_{k\in\N}$ of $n$ and a probability $F$ on $(0,\infty)$
satisfying \eqref{eq:tail1} such that
\begin{equation} \label{eq:uao}
	F(\{z_{n_k}\}) \ge c \, \frac{\epsilon_{n_k}}{A(z_{n_k})} \,, \qquad \forall k \in \N \,.
\end{equation}
\end{lemma}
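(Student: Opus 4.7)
The plan is to construct $F$ as a superposition of atoms at carefully chosen points $z_{n_k}$ plus an absolutely continuous part whose tail is exactly $1/A(x)$ for large $x$. The key is to extract the subsequence sparsely enough that the total atomic mass beyond any level $x$ is negligible compared to $1/A(x)$; this will give \eqref{eq:tail1} and \eqref{eq:uao} with $c = 1$.

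First I would extract a subsequence $(n_k)$ of $\N$ by induction on $k$ such that
\begin{equation*}
A(z_{n_k}) \geq 2\, A(z_{n_{k-1}}) \,, \qquad \epsilon_{n_k} \leq 2^{-k} \,, \qquad \sigma := \sum_{k \in \N} \frac{\epsilon_{n_k}}{A(z_{n_k})} < 1 \,.
\end{equation*}
Such a subsequence exists because $A$ is continuous and strictly increasing to infinity (so $z_n \to \infty$ forces $A(z_n) \to \infty$) and $\epsilon_n \to 0$; the smallness of $\sigma$ is ensured by taking $A(z_{n_1})$ large enough. Set $p_k := \epsilon_{n_k}/A(z_{n_k})$; in particular the $z_{n_k}$ are pairwise distinct and $\sum_k p_k = \sigma < 1$.

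The heart of the argument is the tail estimate for the atomic mass. For $x \ge 0$ define $\ell(x) := \min\{k : z_{n_k} > x\}$, and note that $\ell(x) \to \infty$ as $x \to \infty$. The doubling property gives $A(z_{n_k}) \ge 2^{k-\ell} A(z_{n_\ell})$ for all $k \ge \ell$, hence $\sum_{k \ge \ell} 1/A(z_{n_k}) \le 2/A(z_{n_\ell})$, and since $A(z_{n_{\ell(x)}}) \ge A(x)$ one obtains
\begin{equation*}
P(x) := \sum_{k : z_{n_k} > x} p_k \leq \Bigl(\sup_{k \geq \ell(x)} \epsilon_{n_k}\Bigr) \cdot \frac{2}{A(z_{n_{\ell(x)}})} = o\!\left(\frac{1}{A(x)}\right) \quad \text{as } x \to \infty \,,
\end{equation*}
because $\sup_{k \ge \ell(x)} \epsilon_{n_k} \to 0$.

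To assemble $F$, pick $x_0 \in (0,\infty)$ as the unique point with $1/A(x_0) = 1-\sigma$ (it exists and is strictly positive because $A$ is continuous and increasing with $A(0) = 1/2$ and $A(x)\to\infty$), and let $F_c$ be the absolutely continuous measure on $(x_0,\infty)$ with tail $\overline{F_c}(x) = 1/A(x)$ for $x \ge x_0$ and density $A'(x)/A(x)^2$. Then $F_c$ has total mass $1-\sigma$, and
\begin{equation*}
F := \sum_{k \in \N} p_k\, \delta_{z_{n_k}} + F_c
\end{equation*}
is a probability on $(0,\infty)$ with $F(\{z_{n_k}\}) = p_k = \epsilon_{n_k}/A(z_{n_k})$, so \eqref{eq:uao} holds with $c = 1$, and tail $\overline F(x) = P(x) + \overline{F_c}(x) \sim 1/A(x)$, which is \eqref{eq:tail1}. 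The only delicate point is the estimate on $P(x)$; the remaining steps are routine bookkeeping.
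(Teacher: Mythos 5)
Your proof is correct and takes essentially the same approach as the paper: split $F$ into a sparse atomic piece carrying the required masses plus a base measure whose tail is $\sim 1/A(x)$, then verify the atomic tail is $o(1/A(x))$ so that \eqref{eq:tail1} survives. The paper instead takes a discrete base measure $F_1$ on $\N$ with tail $\sim 2/A(x)$, normalizes the atomic piece to a probability $F_2$, and sets $F = \tfrac12(F_1+F_2)$, yielding $c = c_2/2$; your additive normalization with an absolutely continuous base $F_c$ is a bit cleaner and gives the sharper $c=1$, but the mechanism is identical.
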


With Lemma~\ref{lem:uao} at hand, we prove the second part of Theorem~\ref{th:1/2}.
Assume that $A(x) \in \cR_{1/2}$ is such that condition \eqref{eq:cond} fails,
that is there is a sequence $(x_n)_{n\in\N}$ with $x_n \to \infty$ such that
\begin{equation} \label{eq:dive}
	\zeta_n := \frac{L^*(x_n)}{L(x_n)} \to \infty \,.
\end{equation}
By \eqref{eq:L*}, since $L(\cdot)$ is continuous, we can write
$L^*(x_n) = L(s_n)$ for some $1 \le s_n \le x_n$.
We recall that, for any $\epsilon > 0$, one has
$L(s) / L(x_n) \to 1$ uniformly for $s \in [\epsilon x_n, x_n]$,
by the uniform convergence theorem of slowly varying functions
\cite[Theorem~1.2.1]{cf:BinGolTeu}.
Then it follows by \eqref{eq:dive} that necessarily $s_n = o(x_n)$.
Summarizing:
\begin{equation*}
	x_n \to \infty \,, \qquad s_n = o(x_n) \,, \qquad
	\zeta_n \to \infty \qquad \text{with} \qquad
	\zeta_n = \frac{L(s_n)}{L(x_n)} \,.
\end{equation*}
Let us define
\begin{equation*}
	z_n := x_n - s_n\,, \qquad
	\epsilon_n := \frac{1}{\zeta_n} \,,
\end{equation*}
so that $z_n \sim x_n \to \infty$ and $\epsilon_n \to 0$.
By Lemma~\ref{lem:uao}, there are a subsequence
$(n_k)_{k\in\N}$ of $n$ and a probability $F$ on $(0,\infty)$ such that
\eqref{eq:uao} holds. Then, by $A(x) = L(x) \sqrt{x}$,
\begin{equation*}
\begin{split}
	\int_1^{\eta x_{n_k}} \frac{A(s)^2}{s} \, F( x_{n_k} -\dd s) 
	& \ge \frac{A(s_{n_k})^2}{s_{n_k}} \, F(\{ x_{n_k} - s_{n_k}\})
	= L(s_{n_k})^2 \, F(\{z_{n_k}\}) \\
	& = \zeta_{n_k}^2 \, L(x_{n_k})^2 \, F(\{z_{n_k}\}) 
	\ge \zeta_{n_k}^2 \, \frac{A(x_{n_k})^2}{x_{n_k}} \,
	c \, \frac{\epsilon_{n_k}}{A(z_{n_k})}
	\gtrsim \zeta_{n_k} \, c \, \frac{A(x_{n_k})}{x_{n_k}} \,,
\end{split}
\end{equation*}
where in the last inequality we used the definition of $\epsilon_n$
and the fact that $A(x_n) \sim A(z_n)$, since $x_n \sim z_n$. Consequently,
condition \eqref{eq:ns12} is \emph{not} satisfied, because for every $\eta > 0$
\begin{equation*}
	\limsup_{x\to\infty} \frac{x}{A(x)} \int_1^{\eta x} \frac{A(s)^2}{s} \, F( x_{n_k} -\dd s) 
	\ge \limsup_{k\to\infty} \, c \, \zeta_{n_k} = \infty \,.
\end{equation*}
Since \eqref{eq:ns12} ---which is equivalent to \eqref{eq:ns12'bis0}---
is necessary for the SRT \eqref{eq:SRT},
we have built an example of $F$ satisfying \eqref{eq:tail1}
but not \eqref{eq:SRT}, completing the proof of Theorem~\ref{th:1/2}. \qed

\subsection{Proof of Lemma~\ref{lem:uao}}

Fix $n_0 \in \N$ such that $c_1 := \sum_{n \ge n_0 + 1} \frac{2 \, \alpha}{n \, A(n)} < 1$.
Then define a probability $F_1$ on $\N$ by
\begin{equation} \label{eq:argu}
	F_1(\{n\}) := \, (1-c_1) \ind_{\{n=n_0\}} \,+ \, \frac{2 \, \alpha}{n \, A(n)}
	\ind_{\{n \ge n_0+1\}} \,,
\end{equation}
so that 
\begin{equation*}
	F_1((x,\infty)) \sim \frac{2}{A(x)} \qquad \text{as} \quad x \to \infty \,.
\end{equation*}

We may assume that $(x_n)_{n\in\N}$ is increasing.
Fix a subsequence $(n_k)_{k\in\N}$ of $n$ such that
\begin{equation} \label{eq:geome}
	\frac{\epsilon_{n_{k+1}}}{A(x_{n_{k+1}})} \le \frac{1}{2}
	\, \frac{\epsilon_{n_{k}}}{A(x_{n_{k}})} \,, \qquad \forall k \in \N \,,
\end{equation}
which is clearly possible since $A(x_{n_{k+1}}) \ge A(x_{n_{k}})$ and
$\epsilon_n \to 0$. Then define a probability $F_2$ supported
by $E := \{x_{n_k}: \ k\in\N\}$ by
\begin{equation*}
	F_2(\{x_{n_k}\}) := c_2 \, \frac{\epsilon_{n_k}}{A(x_{n_k})} \,, \qquad
	\text{where} \qquad
	c_2 := \left( \sum_{k\in\N} \frac{\epsilon_{n_k}}{A(x_{n_k})} \right)^{-1} \,,
\end{equation*}
and note that $c_2 > 0$ because the series converges, by \eqref{eq:geome}.
Given $x \in (0,\infty)$, if we define $\bar k(x) := \min\{k\in\N: \ x_{n_k} > x\}$,
using \eqref{eq:geome} we can write
\begin{equation*}
	F_2((x,\infty)) =
	\sum_{k \ge \bar k(x)} c_2 \, \frac{\epsilon_{n_k}}{A(x_{n_k})}
	\le c_2 \, \frac{\epsilon_{n_{\bar k(x)}}}{A(x_{n_{\bar k(x)}})}
	\sum_{k \ge \bar k(x)} \frac{1}{2^{k-\bar k(x)}}
	\le c_2 \,
	\frac{\epsilon_{n_{\bar k(x)}}}{A(x)} \, 2 \,,
\end{equation*}
where the last inequality holds because $x_{n_{\bar k(x)}} \ge x$
by construction. Since $\epsilon_n \to 0$, we have shown that
$F_2((x,\infty)) = o(1/A(x))$ as $x\to\infty$.

We can finally define the probability $F := \frac{1}{2}(F_1 + F_2)$, which
satisfies \eqref{eq:tail1} since
\begin{equation*}
	F((x,\infty)) \sim \frac{1}{2} \big(F_1((x,\infty)) + F_2((x,\infty))\big)
	\sim \frac{1}{2}
	\left( \frac{2}{A(x)} + o\left(\frac{1}{A(x)}\right)
	\right) \sim \frac{1}{A(x)} \,,
\end{equation*}
and by construction $F(\{x_{n_k}\}) \ge \frac{1}{2} F_2(\{x_{n_k}\})$,
hence \eqref{eq:uao} holds with $c := c_2/2$.\qed

\subsection{Proof of Theorem~\ref{th:cont}}

The case $\alpha > \frac{1}{2}$ was already considered in Section~\ref{sec:>12},
hence we focus on $\alpha \le \frac{1}{2}$.
Since the necessity of \eqref{eq:ns12'bis0two} is proved in Appendix~\ref{sec:necce},
it remains to give examples of
$F$ satisfying \eqref{eq:tail2}
and \eqref{eq:ns12'bis0} but not \eqref{eq:ns12'bis0two}.

\smallskip

We first consider the case $\alpha < \frac{1}{2}$.
We fix $A(x) := x^\alpha$
and, in analogy with \eqref{eq:argu}, we define a \emph{symmetric} probability
$F_1$ on $\Z$ by
\begin{equation} \label{eq:f1a}
	F_1(\{n\}) :=  c_1 \, \ind_{\{|n|=n_0\}} \,+\,
	\frac{2 \alpha}{n \, A(n)} \, \ind_{\{|n| \ge n_0+1\}} \,, 
\end{equation}
where $c_1 \in (0,1)$ and $n_0 \in \N$ are chosen so that $\sum_{n\in\Z} F_1(\{n\}) = 1$.
Note that
\begin{equation} \label{eq:f1b}
	F_1((-\infty,-x]) \sim 
	F_1((x,\infty)) \sim \frac{2}{x^\alpha}
	= \frac{2}{A(x)} \qquad \text{as} \quad x \to \infty \,.
\end{equation}

For $n,k\in\N$ we define (recall that $\alpha < \frac{1}{2}$)
\begin{equation} \label{eq:zk}
	x_n := 2^n \,, \quad \
	z_k := k^{\frac{1}{1-2\alpha}},  \quad \
	E_n := \{x_{n,k} := x_n + z_k: \ 0 \le k <
	\hat k_n := \lfloor x_n^{1-2\alpha} \rfloor\} 
\end{equation}
so that $E_n$ is a finite set of points in $[x_n, x_{n+1})$. 
Since $|E_n| \le 2 x_n^{1-2\alpha}$, we have
\begin{equation} \label{eq:dn}
	\sum_{y \in E_{n}} \frac{A(y)}{y\, \sqrt{\log y}}
	\le \frac{A(x_n)}{x_n\, \sqrt{\log x_n}} \, | E_{n} |
	\le \frac{2 \, x_n^{1-2\alpha}}{x_n^{1-\alpha} \sqrt{\log x_n}}
	= \frac{2}{x_n^\alpha \, \sqrt{\log x_n}} =: d_n \,,
\end{equation}
and note that $\sum_{n\in\N} d_n < \infty$,
since $x_n = 2^n$.
We can then define a probability $F_2$ by
\begin{equation} \label{eq:F2}
	F_2(\{y\}) := c_2 \, \frac{A(y) \, \ind_{\{y\in E\}}}{y \, \sqrt{\log y}}
	= c_2 \, \frac{\ind_{\{y\in E\}}}{y^{1-\alpha} \sqrt{\log y}} \,,
	\qquad \text{where} \qquad E := \bigcup_{n\in\N} E_{n} \,,
\end{equation}
and $c_2$ is a normalizing constant. Note that
for $x \in [x_\ell, x_{\ell+1})$ we have the upper bound
\begin{equation} \label{eq:tailf2}
\begin{split}
	F_2((x, \infty)) & \le \sum_{n=\ell}^\infty 
	F_2(E_{n}) \le \sum_{n=\ell}^\infty 
	d_n  \le
	\frac{c_2}{\sqrt{\log x_\ell}}
	\sum_{n=\ell}^\infty \frac{1}{2^{\alpha n}}
	\lesssim \frac{1}{\sqrt{\log x_\ell}} \, \frac{1}{2^{\alpha\ell}} 
	= \frac{1}{x_\ell^\alpha \sqrt{\log x_\ell}} \\
	& \lesssim \frac{1}{x^\alpha \sqrt{\log x}} =
	o\left(\frac{1}{x^\alpha}\right) =
	o\left(\frac{1}{A(x)}\right)  \qquad \text{as } x\to\infty \,.
\end{split}
\end{equation}
Consequently, the probability $F := \frac{1}{2}(F_1 + F_2)$
satisfies \eqref{eq:tail2} with $A(x) = x^\alpha$ and $p = q = 1$.

Let us show that $F$ does not satisfy \eqref{eq:ns12two}, 
which is equivalent to \eqref{eq:ns12'bis0two}. We focus on the second part of the integral. 
For $\eta < \frac{1}{2}$ and $x=x_n$, so that $[x_n + 1, x_n + \eta x_n) \subseteq E_n$,
we have
\begin{equation*}
\begin{split}
	\int_{[1,\eta x_n)} \frac{A(s)^2}{s} \, F_2(x_n + \dd s) 
	& = \sum_{y \in E_{n}} 
	\frac{\ind_{\{y \in [x_n+1, x_n+\eta x_n)\}}}{(y-x_n)^{1-2\alpha}} \, F_2(\{y\}) \\
	& \ge F_2(\{x_{n+1}\})
	\sum_{1 \le k < (\eta x_n)^{1-2\alpha}} \frac{1}{z_k^{1-2\alpha}}\,,
\end{split}
\end{equation*}
because $F_2(\{\cdot\})$ is decreasing on $E$. Recalling \eqref{eq:zk}-\eqref{eq:F2},
since $\sum_{k=1}^z \frac{1}{k} \sim \log z$, we obtain
\begin{equation*}
	\int_{[1,\eta x_n)} \frac{A(s)^2}{s} \, F_2(x_n + \dd s) 
	\gtrsim_\eta F_2(\{x_{n+1}\}) \log x_n \gtrsim
	\frac{\sqrt{\log x_n}}{x_n^{1-\alpha}} = \sqrt{\log x_n} \, \frac{A(x_n)}{x_n} \,.
\end{equation*}
The $\limsup_{x\to\infty}$ in \eqref{eq:ns12two} then equals $\infty$
for every fixed $\eta > 0$, hence \eqref{eq:ns12two} does not hold.

Let us finally show that $F$ does satisfy \eqref{eq:ns12}, which is equivalent to \eqref{eq:ns12'bis0}
by Lemma~\ref{lem:tec}. Since $F_1$ clearly satisfies \eqref{eq:ns12}, it suffices to focus on $F_2$.
Note that
\begin{equation} \label{eq:yie}
\begin{split}
	\int_{[1,\eta x)} \frac{A(s)^2}{s} \, F_2(x-\dd s) 
	& \le \left\{ \sup_{z \in (x-\eta x, x-1]} F_2(\{z\}) \right\}
	\sum_{y \in E} \frac{1}{(x-y)^{1-2\alpha}} \,
	\ind_{\{y \in (x-\eta x, x-1]\}} \\
	& \lesssim \frac{1}{x^{1-\alpha} \sqrt{\log x}} \, 
	\sum_{y \in E} \frac{1}{(x-y)^{1-2\alpha}} \, \ind_{\{y \in (x-\eta x, x-1]\}} \,.
\end{split}
\end{equation}
For $x \ge 5$ we have $x-1 \in [x_\ell, x_{\ell+1})$ for some $\ell \ge 2$.
For $\eta < \frac{1}{2}$, certainly $x-\eta x > \frac{x}{2} \ge x_{\ell-1}$,
hence we can replace $\ind_{\{y \in (x-\eta x, x-1]\}}$
by $\ind_{\{y \in [x_{\ell-1}, x-1]\}}$ in \eqref{eq:yie},
getting
\begin{equation}\label{eq:2so}
	\int_{[1,\eta x)} \frac{A(s)^2}{s} \, F_2(x-\dd s) 
	\lesssim \frac{A(x)}{x\, \sqrt{\log x}} 
	\left\{ 
	\sum_{y \in E_{\ell-1}} 	\frac{1}{(x-y)^{1-2\alpha}} +
	\sum_{y \in E_{\ell}} \frac{\ind_{\{y \le x-1\}}}{(x-y)^{1-2\alpha}} 
	\right\}\,.
\end{equation}
It suffices to show that both sums
are uniformly bounded, and relation \eqref{eq:ns12} holds.

We start looking at the second sum. 
Writing $y= x_{\ell,k}$, by \eqref{eq:zk}, the constraint
$y \le x-1$ becomes $k \le \bar k$ for a suitable $\bar k = \bar k_x$
(the precise value is immaterial), hence
\begin{equation} \label{eq:2so2}
	\sum_{y \in E_{\ell}} \frac{\ind_{\{y \le x-1\}}}{(x-y)^{1-2\alpha}}
	= \sum_{0 \le k \le \bar k} \frac{1}{(x-x_{\ell,k})^{1-2\alpha}}
	\le 1 + \sum_{0 \le k \le \bar k - 1}
	\frac{1}{(x_{\ell, \bar k}-x_{\ell,k})^{1-2\alpha}} \,,
\end{equation}
where we have bounded the term $k=\bar k$ by $x - x_{\ell,\bar k} \ge x - (x-1) = 1$,
while for the terms $k < \bar k$ we have replaced $x$ by $x_{\ell,\bar k} < x$.
Next observe that for $k = \bar k - i$
\begin{equation*}
	x_{\ell, \bar k}-x_{\ell,\bar k - i} = z_{\bar k}-z_{\bar k - i} =
	\bar k^{\frac{1}{1-2\alpha}} - (\bar k - i)^{\frac{1}{1-2\alpha}}
	= \bar k^{\frac{1}{1-2\alpha}} 
	\left[1 - (1-\tfrac{i}{\rule{0pt}{.65em}\bar k})^{\frac{1}{1-2\alpha}}\right] \,.
\end{equation*}
Since $1-(1-x)^\gamma \ge x$ for $0 \le x \le 1$ and $\gamma \ge 1$, we obtain
$x_{\ell, \bar k}-x_{\ell,\bar k - i} \ge \bar k^{\frac{2\alpha}{1-2\alpha}} i$, hence
\begin{equation*}
	\sum_{y \in E_{\ell}} \frac{\ind_{\{y \le x-1\}}}{(x-y)^{1-2\alpha}}
	\le 1 + \sum_{1 \le i \le \bar k} 
	\frac{1}{\big(\bar k^{\frac{2\alpha}{1-2\alpha}} 
	i \big)^{1-2\alpha}} = 1 + \frac{1}{\bar k^{2\alpha}}
	\sum_{1 \le i \le \bar k} \frac{1}{i^{1-2\alpha}} \lesssim 1 \,,
\end{equation*}
uniformly over $\bar k$, by \eqref{eq:Kar}. Analogously,
for the first sum in \eqref{eq:2so}, we can write $y = x_{\ell-1,k}$
and sum over $0 \le k \le \hat k$
with $\hat k := \hat k_{\ell - 1}$ (recall \eqref{eq:zk}).
Arguing as before, we can bound
\begin{equation*}
	\sum_{y \in E_{\ell-1}} \frac{1}{(x-y)^{1-2\alpha}}
	= \sum_{0 \le k \le \hat k} \frac{1}{(x-x_{\ell-1,k})^{1-2\alpha}}
	\le 1 + \sum_{0 \le k \le \hat k - 1}
	\frac{1}{(x_{\ell-1, \hat k}-x_{\ell-1,k})^{1-2\alpha}} \,,
\end{equation*}
and also this sum is $\lesssim 1$, by the previous steps with $\hat k$ in place of $\bar k$. 

\medskip

We finally consider the case $\alpha = \frac{1}{2}$.
We fix $A(x) := \sqrt{x}/\log (1+x)$
and we define $F_1$ as in \eqref{eq:f1a} (with our current $A(x)$), so that \eqref{eq:f1b} holds.
Next we change \eqref{eq:zk} to
\begin{equation*} 
	x_n := 2^n \,, \quad \
	z_k := e^{\sqrt{k}}-1,  \quad \
	E_n := \{x_{n,k} := x_n + z_k: \ 0 \le k <
	\hat k_n := \lfloor \log(1+ x_n) \rfloor^{2}\} \,,
\end{equation*}
and note that $E_n \subseteq [x_n, x_{n+1})$. We then define 
a probability $F_2$ supported by $E := \bigcup_{n\in\N} E_n$:
\begin{equation*}
	F_2(\{z\}) := c_2 \, \frac{A(y) \, \ind_{\{y\in E\}}}{ y \, \sqrt{\log\log (1+y)}}
	= c_2 \, \frac{\ind_{\{y\in E\}}}{\sqrt{y} \, \log (1+y) \, \sqrt{\log\log (1+y)}} \,.
\end{equation*}
Since $|E_n| \le 2 (\log (1+x_n))^{2}$, we can write
\begin{equation*}
	\sum_{y \in E_{n}} \frac{A(y)}{y \, \sqrt{\log \log (1+y)}}
	\le \frac{ | E_{n} |}{\sqrt{x_n} \, \log(1+ x_n) \, \sqrt{\log \log(1+ x_n)}}
	\le \frac{2 \, \log(1+ x_n)}{\sqrt{x_n} \, \sqrt{\log\log(1+ x_n)}} =: d_n \,,
\end{equation*}
hence for $x \in [x_\ell, x_{\ell+1})$ we have the upper bound
\begin{equation*}
	F_2((x,\infty)) \le \sum_{n=\ell}^\infty F_2(E_n)
	\le c_2 \sum_{n=\ell}^\infty d_n
	\lesssim d_\ell
	\lesssim \frac{\log(1+ x)}{\sqrt{x} \, \sqrt{\log\log(1+ x)}}
	= o\left(\frac{1}{A(x)}\right) \,.
\end{equation*}
It follows that $F := \frac{1}{2}(F_1 + F_2)$ 
satisfies \eqref{eq:tail2} with $A(x) = \sqrt{x}/\log (1+x)$ and $p = q = 1$.

To show that $F$ does not satisfy \eqref{eq:ns12two}, 
note that for $\eta < \frac{1}{2}$ and $x=x_n$ we have
\begin{equation*}
\begin{split}
	\int_{[1,\eta x_n)} \frac{A(s)^2}{s} \, & F(x_n + \dd s) 
	\ge F_2(\{x_{n+1}\})
	\sum_{1 \le k \le \lfloor \log (1+\eta x_n)\rfloor^{2}} \frac{1}{(\log (1+z_k))^{2}} \\
	& \gtrsim
	\frac{A(x_n)}{ x_n \, \sqrt{\log\log (1+x_n)}} \, \log\{
	\lfloor \log(1+\eta x_n)\rfloor^2\}
	\gtrsim_\eta  \frac{A(x_n)}{ x_n}
	\sqrt{\log \log x_n} \,.
\end{split}
\end{equation*}
Finally, to show that $F$ satisfies \eqref{eq:ns12}, arguing as in \eqref{eq:yie}
we get the analogue of \eqref{eq:2so}:
\begin{equation}\label{eq:sum12}
\begin{split}
	\int_{[1,\eta x)} \frac{A(s)^2}{s} \, F_2(x-\dd s) 
	\lesssim \frac{A(x)}{x \sqrt{ \log \log (1+x)}} 
	\Bigg\{ &
	\sum_{y \in E_{\ell-1}} 	\frac{1}{[\log (1+x-y)]^{2}} \\
	& \qquad +
	\sum_{y \in E_{\ell}} 
	\frac{\ind_{\{y \le x-1\}}}{[\log (1+x-y)]^{2}} 
	\Bigg\} \,,
\end{split}
\end{equation}
and it remains to show that both sums are bounded. For a suitable
$\bar k = \bar k_x$ the second sum is
\begin{equation} \label{eq:2so2bis}
	\sum_{0 \le k \le \bar k} 
	\frac{1}{[\log(1+x-x_{\ell,k})]^{2}}
	\le \frac{1}{(\log 2)^2} + \sum_{0 \le k \le \bar k - 1}
	\frac{1}{[\log(1+x_{\ell, \bar k}-x_{\ell,k})]^{2}} \,,
\end{equation}
where we have bounded the term $k=\bar k$ by $x-x_{\ell, \bar k} \ge x-(x-1) = 1$
and we have replaced $x$ by $x_{\ell,\bar k}$ in the remaining terms.
Next we note that for all $k \le \bar k - 1$
\begin{equation*}
	\log(1+x_{\ell, \bar k}-x_{\ell,k}) \ge 
	\log(1+x_{\ell, \bar k}-x_{\ell,\bar k - 1})
	= \log\big( 1 + e^{\sqrt{\bar k}}-e^{\sqrt{\bar k - 1}} \big)
	\gtrsim \log \frac{e^{\sqrt{\bar k}}}{\sqrt{\bar k}} \gtrsim \sqrt{\bar k} \,,
\end{equation*}
which plugged into \eqref{eq:2so2bis} shows that the sum is uniformly bounded.
The first sum in \eqref{eq:sum12} is estimated similarly, replacing $\ell$ by $\ell - 1$
and $\bar k$ by $\hat k_{\ell-1}$. This completes the proof.
\qed

\appendix

\section{Miscellanea}
\label{sec:refoapp}

\subsection{Proof of Lemma~\ref{lem:tec}}\label{sec:tec}

By \eqref{eq:rr},
uniformly for $0 \le s \le \eta x$
and $\eta < \frac{1}{2}$, we can write
\begin{equation} \label{eq:inai}
	F(x-s+I) \sim \frac{\rr(x-s)}{(x-s) \, A(x-s)} \simeq
	\frac{\rr(x-s)}{x \, A(x)} \,,
\end{equation}
and analogously with $s$ replaced by $-s$.
Then \eqref{eq:ns12'bis0two} is equivalent to the following
relation:
\begin{equation}\label{eq:ns12'}
	\lim_{\eta \to 0}
	\left( \limsup_{x\to\infty} \frac{x}{A(x)}
	\int_{1}^{\eta x} 
	\frac{A(s)^2}{s} \, \big( F(x - s + I) +
	\ind_{\{q>0\}} F(x + s + I) \big) \, \dd s \right) = 0 \,.
\end{equation}
We show below that \eqref{eq:ns12'} is equivalent to \eqref{eq:ns12two}.
Then \eqref{eq:ns12'bis0two} is equivalent to \eqref{eq:ns12two},
i.e.\ the last statement in Lemma~\ref{lem:tec} holds.
For $q=0$, we have the equivalence of \eqref{eq:ns12'bis0} and \eqref{eq:ns12}.

Let us now prove the equivalence of relations \eqref{eq:ns} and \eqref{eq:nsbis0}.
Since $h > 0$ is fixed, uniformly for $0 \le s \le \eta x$ and $\eta < \frac{1}{2}$ we can write
\begin{equation*}
	\P(X \in (x-s,x]) \sim \P(X \in (x-s,x-h])
	= \int_\R \ind_{\{t \in [h,s)\}} \, F(x-\dd t)
\end{equation*}
Writing $1 = \frac{1}{h} \int_{\R}
\ind_{\{u \in (t-h,t]\}} \, \dd u$, for any fixed $t$, by Fubini's theorem we get
\begin{equation*}
\begin{split}
	\P(X \in (x-s,x]) & \sim \frac{1}{h} \int_{0}^s\left(
	\int_\R \ind_{\{t \in [u,u+h)\}} \, F(x-\dd t) \right) \, \dd u 
	= \frac{1}{h} \int_{0}^s F(x-u + I) \, \dd u \,.
\end{split}
\end{equation*}
Applying \eqref{eq:inai} then gives
\begin{equation*}
	\P(X \in (x-s,x]) \sim \frac{1}{h}
	\, \frac{1}{x \, A(x)} \int_{0}^s \rr(x-u + I) \, \dd u
	= \frac{1}{h}
	\, \frac{1}{x \, A(x)} \, \RR_0(x-s,x) \,,
\end{equation*}
which shows that \eqref{eq:ns} is equivalent to \eqref{eq:nsbis0}.

It remains to prove the equivalence of \eqref{eq:ns12'} and \eqref{eq:ns12two}.
We recall that $I=(-h,0]$ and, for this purpose, we can take $h > 0$ arbitrarily
also in the lattice case.
We first claim that in \eqref{eq:ns12two} 
one can equivalently replace the domain of integration
$[1,\eta x)$ by $[1+h, \eta x)$.
For this it is enough to show that the interval $[1,1+h)$ gives a contribution
to \eqref{eq:ns12two} which is dominated by that of $[1+h,1+2h)$.
The function $A(s)^2/s$ is continuous and strictly positive, hence it is
bounded away from zero and infinity
in any compact interval. Then for $x, x'$ large enough
\begin{gather*}
	\frac{x}{A(x)}
	\int_{s \in [1,1+h)} 
	\frac{A(s)^2}{s} \, \P(X \in x - \dd s) 
	\lesssim \frac{x}{A(x)} \, \P(X \in (x-h-1, x-1]) \,, \\
	\frac{x'}{A(x')}
	\int_{s \in [1+h, 1+2h)} 
	\frac{A(s)^2}{s} \, \P(X \in x' - \dd s) 
	\gtrsim \frac{x'}{A(x')} \, \P(X \in (x'-2h-1, x'-h-1]) \,.
\end{gather*}
Choosing $x' = x+h$ and letting $x \to \infty$, since 
$\frac{x'}{A(x')} \sim \frac{x}{A(x)}$, we have proved the claim.
With analogous estimates one deals with
$\P(X \in x+\dd s)$ in \eqref{eq:ns12two}.

Next we note that there are constants $0 < c < C < \infty$
(depending on $w$) such that
\begin{equation}\label{eq:cC}
	c \left( \frac{1}{h} \int_{s-h}^{s} \frac{A(t)^2}{t} \, \dd t \right) \le
	\frac{A(s)^2}{s}
	\le C \left( \frac{1}{h} \int_{s-h}^{s} \frac{A(t)^2}{t} \, \dd t \right) \,,
	\qquad \forall s \ge 1+h \,.
\end{equation}
Plugging this into \eqref{eq:ns12two}, where the domain of integration has
been changed to $[1+h,\eta x)$,
shows precisely that \eqref{eq:ns12two} is equivalent to \eqref{eq:ns12'}.\qed

\subsection{Proof of Theorem~\ref{th:main}: second part}
\label{sec:maramao}

We show that condition \eqref{eq:ns} is equivalent to
\eqref{eq:ns12} for $\alpha < \frac{1}{2}$, while it is stronger
for $\alpha = \frac{1}{2}$. By Lemma~\ref{lem:tec}, an analogous
statement holds for \eqref{eq:nsbis0} and \eqref{eq:ns12'bis0},
proving the second part of Theorem~\ref{th:main}.

For fixed $x$, we define 
$G(s) := \P(x-X \in (1,s]) = \P(x-X \le s) - \P(x-X \le 1)$ and note that
$\P(x-X \in \dd s) = \dd G(s)$. 
Integrating by parts, since $G(1) = 0$ we get
\begin{equation} \label{eq:parts}
\begin{split}
	\int_{s \in [1,\eta x)} 
	\frac{A(s)^2}{s} \, \P(x-X \in \dd s)
	& = G(\eta x-) \, \frac{A(\eta x)^2}{\eta x} -
	\int_1^{\eta x} G(s) \, \frac{\dd}{\dd s} \left( \frac{A(s)^2}{s} \right)
	\dd s \,.
\end{split}
\end{equation}
The first term in the right hand side equals 
\begin{equation} \label{eq:flo}
\begin{split}
	\P(X \in (x-\eta x, x-1)) \, \frac{A(\eta x)^2}{\eta x}
	& \underset{x\to\infty}{\sim} 
	\left(\frac{1}{A((1-\eta)x)}-\frac{1}{A(x-1)}\right) \, \frac{A(\eta x)^2}{\eta x} \\
	& \underset{x\to\infty}{\sim} 
	\left(\frac{1}{(1-\eta)^\alpha}- 1\right) \eta^{2\alpha-1} \, \frac{A(x)}{x}
	=
	O( \eta^{2\alpha}) \, \frac{A(x)}{x} \,,
\end{split}
\end{equation}
hence this terms always gives a negligible contribution
to the limit in \eqref{eq:ns12}.

Next observe that by \eqref{eq:deriv}
\begin{equation*}
	\frac{\dd}{\dd s} \frac{A(s)^2}{s} 
	= \frac{2A(s)A'(s)}{s} - \frac{A(s)^2}{s^2}
	\begin{cases}
	\underset{s\to\infty}{\sim} 	
	(2\alpha - 1) \frac{A(s)^2}{s^2} & \text{if } \ \alpha < \frac{1}{2} \\
	\rule{0pt}{1.5em}\underset{s\to\infty}{=} 
	o\left( \frac{A(s)^2}{s^2} \right) & \text{if } \ \alpha = \frac{1}{2} 
	\end{cases} \,.
\end{equation*}
If $\alpha < \frac{1}{2}$, for the second term in \eqref{eq:parts} we can write
\begin{equation} \label{eq:relaa}
\begin{split}
	-\int_1^{\eta x} G(s) \, \frac{\dd}{\dd s} \left( \frac{A(s)^2}{s} \right)
	& \simeq \int_1^{\eta x} \frac{A(s)^2}{s^2} \, \P(X \in [x-s,x-1)) \, \dd s \\
	& = \int_1^{\eta x} \frac{A(s)^2}{s^2} \, \P(X \in (x-s,x-1)) \, \dd s \,.
\end{split}
\end{equation}
If relation \eqref{eq:ns} holds, it follows by
\eqref{eq:parts}-\eqref{eq:flo}-\eqref{eq:relaa} that relation \eqref{eq:ns12}
also holds. Viceversa, if \eqref{eq:ns12} holds,
applying again \eqref{eq:parts}-\eqref{eq:flo}-\eqref{eq:relaa} together with
\eqref{eq:nec} (which is a consequence of \eqref{eq:ns12}), we see that
\eqref{eq:ns} holds. Thus \eqref{eq:ns} and \eqref{eq:ns12}
are equivalent for $\alpha < \frac{1}{2}$.

For $\alpha = \frac{1}{2}$ we can replace $\simeq$ by $\lesssim$ in 
\eqref{eq:relaa}, hence \eqref{eq:ns} still implies \eqref{eq:ns12}.\qed

\subsection{Necessity of \eqref{eq:ns12'bis0} and \eqref{eq:ns12'bis0two} for the SRT}
\label{sec:necce}

We assume that $F$ is a probability on $\R$ satisfying \eqref{eq:tail2}.
We show that relation \eqref{eq:SRTeq}, which is equivalent to the SRT \eqref{eq:SRT},
implies \eqref{eq:ns12two}, hence it implies \eqref{eq:ns12'bis0two}, by
Lemma~\ref{lem:tec}. In particular, the case $q=0$ shows
that, assuming \eqref{eq:tail1}, relation \eqref{eq:SRTeq} implies \eqref{eq:ns12'bis0}.

\smallskip

Recall that $J = (-v,0]$, cf.\ \eqref{eq:J}.
Assume that $F$ satisfies \eqref{eq:tail2}
and define $K \subseteq \R$ by
\begin{equation}\label{eq:K}
	K := \begin{cases}
	[1,2] & \text{if } q = 0\\
	\rule{0pt}{1.3em} [-2,-1] \cup [1,2] & \text{if } q > 0
	\end{cases} \,,
\end{equation}
Here is a mild refinement of the local limit theorem \eqref{eq:llt},
there are $c, C \in (0,\infty)$ such that
\begin{equation}\label{eq:Chillt}
	\inf_{z \in \R: \, z/a_n \in K}
	\P\left(S_n \in z+J, \, \max_{1 \le i \le n} X_i \le C a_n \right)
	\ge \frac{c}{a_n} \,, \qquad \forall n \in \N.
\end{equation}
This follows by \cite[Lemma 4.5]{cf:Chi},
but it is worth giving a direct proof. By \eqref{eq:llt}, there is $c_1 > 0$ such that
\begin{equation} \label{eq:infphi}
	\inf_{z \in \R: \, z/a_n \in K} \P(S_n \in z+J) \ge \frac{c_1}{a_n} \,,
	\qquad \forall n \in \N,
\end{equation}
because $\min_{z \in K}\phi(z) > 0$. Next, for the maximum restricted
to $i \le n/2$ (assuming that $n$ is even for simplicity, the odd case is analogous), we can write
\begin{equation*}
\begin{split}
	\P\left(S_n \in z+J, \, \max_{1 \le i \le \frac{n}{2}} X_i > C a_n \right)
	& = \int_{\R} 
	\P\bigg(S_{\frac{n}{2}} \in \dd y, \, \max_{1 \le i \le \frac{n}{2}} X_i > C a_n \bigg)
	\P\left(S_{\frac{n}{2}} \in z-y+J \right) \\
	& \le \P\left( \max_{1 \le i \le \frac{n}{2}} X_i > C a_n \right)
	\left\{ \sup_{x \in \R} \P\left(S_{\frac{n}{2}} \in x+J\right) \right\} \,.
\end{split}
\end{equation*}
The term in bracket is $\le c_2 / a_n$, by \eqref{eq:sup}.
By Potter's bounds \eqref{eq:Potter} and by $A(a_n) = n$
we have $\P(X > C a_n) \le c_3/A(Ca_n) \le c_4 / (C^{\alpha/2} n)$,
hence
\begin{equation*}
	\sup_{z\in\R} \P\left(S_n \in z+J, \, \max_{1 \le i \le \frac{n}{2}} X_i > C a_n \right)
	\le \frac{n}{2} \, \frac{c_4}{C^{\alpha/2} \, n} \, \frac{c_2}{a_n}
	= \frac{c_2 \, c_4}{2 \, C^{\alpha/2} \, a_n} \,.
\end{equation*}
The contribution of $\{\max_{\frac{n}{2} \le i \le n} X_i > C a_n\}$
is the same, by exchangeability, hence by \eqref{eq:infphi}
\begin{equation*}
	\inf_{z \in \R: \, z/a_n \in K}
	\P\left(S_n \in z+J, \, \max_{1 \le i \le n} X_i \le C a_n \right)
	\ge \frac{c_1}{a_n} - 2 \frac{c_2 \, c_4}{2 \, C^{\alpha/2} a_n}
	= \left(c_1 - \frac{c_2 \, c_4}{C^{\alpha/2}}\right) \frac{1}{a_n} \,,
\end{equation*}
which proves \eqref{eq:Chillt}, provided $C$ is chosen large enough.

\smallskip

Next we argue as in \cite[Proposition 2.2]{cf:Chi}.
Since $\{X_i > t, \, \max_{j \in \{1,\ldots,n\} \setminus \{i\}} X_j \le t\}$
are disjoint events for $i=1, \ldots, n$, we can write
\begin{equation} \label{eq:start}
	\P(S_n \in x+J) \ge n \, \P\left(S_n \in x+J, \ X_n > \frac{x}{2}, \
	\max_{1 \le j \le n-1} X_j \le \frac{x}{2} \right) \,.
\end{equation}
If $n \le A(\delta x)$ then $a_n \le \delta x$, hence $\frac{x}{2} > C a_n$
for $\delta < \frac{1}{2C}$. 
Therefore, by \eqref{eq:K}-\eqref{eq:Chillt},
\begin{equation} \label{eq:lastint}
\begin{split}
	\P(S_n \in x+J) & \ge \int_{\frac{x}{2}}^\infty \P(X \in \dd y)
	 \, n \, \P\left(S_{n-1} \in x-y+J, \
	\max_{1 \le j \le n-1} X_j \le C a_n \right) \\
	& \ge \int_{\frac{x}{2}}^\infty \P(X \in \dd y)
	\, c \, \frac{n-1}{a_{n-1}} \, \ind_{\{(x-y)/a_{n-1} \in K\}} \\
	& \ge \int_{\frac{x}{2}}^\infty \P(X \in \dd y)
	\, c \, \frac{A\big(\frac{|x-y|}{2}\big)}{|x-y|} \, \ind_{\{(x-y)/a_{n-1} \in K\}} \,,
\end{split}
\end{equation}
where the last inequality follows because $a_{n-1} \le |x-y| \le 2a_{n-1}$,
by the definition \eqref{eq:K} of $K$,
and we recall that $a_m = A^{-1}(m)$.
Let us assume that $q > 0$.
If we restrict the integral to $y \in (x-\delta x, x-1] \cup [x+1, x+\delta x)$, i.e.\
$1 \le |y-x| < \delta x$, summing over $n \le A(\delta x)$ we get
\begin{equation*}
	\sum_{1 \le n \le A(\delta x)}\ind_{\{(x-y)/a_{n-1} \in K\}} =
	\sum_{1 \le n \le A(\delta x)}
	\ind_{\{A(\frac{|x-y|}{2}) \le n-1 \le A(|x-y|)\}}
	\ge A(|x-y|) - A(\tfrac{|x-y|}{2}) \,.
\end{equation*}
Since $A(z) - A(\frac{z}{2}) \gtrsim A(z)$, and also
$A(\frac{z}{2}) \gtrsim A(z)$, we obtain from \eqref{eq:lastint}
\begin{equation} \label{eq:shode}
\begin{split}
	\sum_{1 \le n \le A(\delta x)} \P(S_n & \in x+J)
	\gtrsim \int_{1 \le |y-x| < \delta x} \P(X \in \dd y) \, \frac{A(|x-y|)^2}{|x-y|} \\
	& = \int_{s \in [1,\delta x)} \frac{A(s)^2}{s} \, \big( \P(X \in x - \dd s)
	+ \ind_{\{q>0\}} \P(X \in x + \dd s) \big) \,,
\end{split}
\end{equation}
where we performed the change of variables $s = x-y$
and we inserted $\ind_{\{q>0\}}$ so that the formula holds also for $q=0$
(just restrict \eqref{eq:lastint} to $y \in (x-\delta x, x-1]$).

Assume now that \eqref{eq:SRTeq} holds.
If we can replace $I=(-h,0]$ by $J = (-v,0]$ therein,
\eqref{eq:shode} shows that \eqref{eq:ns12two} holds, completing the proof.
To replace $I$ by $J$, it suffices to write
\begin{equation} \label{eq:follo}
	\P(S_n \in x+J) \le \sum_{\ell = 0}^{\lfloor v/h \rfloor}
	\P(S_n \in x_\ell + I) \,, \qquad \text{where} \qquad
	x_\ell := x-\ell h \,,
\end{equation}
and note that relation \eqref{eq:SRTeq} holds replacing
$\P(S_n \in x + I)$ by $\P(S_n \in x_\ell + I)$, for fixed $\ell$, because
$x/A(x) \sim x_\ell/A(x_\ell)$.
(Since $v > 0$ and $h > 0$ are fixed, $\lfloor v/h \rfloor$ is also fixed.)
\qed

\subsection{On condition \eqref{eq:ns12'bis0} for $\alpha > \frac{1}{2}$}
\label{sec:equi<}

Let us show that condition \eqref{eq:ns12'bis0}
is always satisfied for $\alpha > \frac{1}{2}$. 
By Lemma~\ref{lem:tec}, it is equivalent to prove \eqref{eq:ns12}. Plainly,
\begin{equation} \label{eq:rh}
\begin{split}
	\int_{s \in [1,\eta x)} \frac{A(s)^2}{s} \, \P(X \in x - \dd s) 
	& \le \left( \sup_{s \in [1,\eta x)} \frac{A(s)^2}{s} \right) \P(X \in (x-\eta x, x]) \\
	& \lesssim \frac{A(\eta x)^2}{\eta x} \left( \frac{1}{A((1-\eta)x)} - \frac{1}{A(x)}\right) \\
	& \underset{x\to\infty}{\sim}  \frac{A(x)}{x}
	\eta^{2\alpha-1} \left(\frac{1}{(1-\eta)^\alpha}- 1\right) 
	=
	\frac{A(x)}{x} \, O(\eta^{2\alpha}) \,,
\end{split}
\end{equation}
where the second inequality holds
because $A(s)^2/s$ is regularly varying with index $2\alpha-1 > 0$
and we can apply \cite[Theorem~1.5.3]{cf:BinGolTeu}. Consequently
relation \eqref{eq:ns12} holds.\qed

\subsection{Necessity of conditions \eqref{eq:nec} and \eqref{eq:nec2}}
\label{sec:equinec}

We prove that \eqref{eq:ns12} implies \eqref{eq:nec} and \eqref{eq:nec2}.
Let us consider relation \eqref{eq:ns12} with $x$ replaced by $x+1$:
restricting the integral to $y \in [1,1+w)$, since $A(s)^2/s$ is bounded away from zero,
we get
\begin{equation*}
\begin{split}
	0 & = \lim_{\eta \to 0} \left( \limsup_{x \to \infty} \frac{x+1}{A(x+1)}
	\int_{s \in [1,1+w)} \frac{A(s)^2}{s} \, \P(X \in x+1 - \dd s) \right) \\
	& \gtrsim \limsup_{x \to \infty} \frac{x+1}{A(x+1)}\,
	\P(X \in (x-w,x]) = \limsup_{x \to \infty} \frac{x}{A(x)}\,
	\P(X \in (x-w,x])  \,,
\end{split}
\end{equation*}
which is precisely \eqref{eq:nec}.
In order to obtain \eqref{eq:nec2}, let us write
\begin{equation*}
\begin{split}
	\P(S_m & (x-w,x]) \le m \, \P\left(S_m \in (x-w,x], 
	\max_{1 \le i \le m-1} X_i \le X_m \right) \\
	& \le m \int_{y \in [0, \frac{m-1}{m}x]} 
	\, \P\left(S_{m-1} \in \dd y, \, \max_{1 \le i \le m-1} X_i \le x-y \right)
	\, \P(X \in (x-y-w,x-y]) \\
	& \le m \, \sup_{z \in [\frac{1}{m}x, x]} \P(X \in (z-w,z])
	= \sup_{z \in [\frac{1}{m}x, x]} o\left(\frac{A(z)}{z}\right)
	\lesssim o\left(\frac{A(x)}{x}\right) \,,
\end{split}
\end{equation*}
completing the proof.
\qed


\bigskip

\end{document}